\newtheorem{theorem}{Theorem}[section]
\newtheorem{proposition}[theorem]{Proposition}
\newtheorem{lemma}[theorem]{Lemma}
\newtheorem{conjecture}[theorem]{Conjecture}
\newtheorem{corollary}[theorem]{Corollary}
\theoremstyle{definition}
\newtheorem{definition}[theorem]{Definition}
\newtheorem{example}[theorem]{Example}
\newtheorem{remark}[theorem]{Remark}
\newtheoremstyle{named}{}{}{\itshape}{}{\bfseries}{.}{.5em}{#1 \thmnote{#3}}
\theoremstyle{named}
\newtheorem*{namedtheorem}{Theorem}
\newcommand{\sort}{\textup{sort}}
\def\fb{\mathrm{fb}}
\def\l{\mathrm{\ell}}
\title{An Orthodontia Formula for Grothendieck Polynomials}
\author{Karola M\'esz\'aros}
\address{Karola M\'esz\'aros, Department of Mathematics, Cornell University, Ithaca, NY 14853.  \newline\textup{karola@math.cornell.edu}
}
\author{Linus Setiabrata}
\address{Linus Setiabrata, Department of Mathematics, University of Chicago, Chicago, IL, 60637.  \newline\textup{linus@math.uchicago.edu}
}
\author{Avery St.~Dizier}
\address{Avery St. Dizier, Department of Mathematics, University of Illinois at Urbana-Champaign, Urbana, IL 61801.  \newline\textup{stdizie2@illinois.edu}
}
\thanks{Karola M\'esz\'aros received support from  CAREER NSF Grant DMS-1847284. Avery St.~Dizier received support from NSF Grant DMS-2002079.}
\subjclass[2010]{Primary 05E05, Secondary 05E10}
\begin{document}
	\maketitle
	\begin{abstract}
		We give a new operator formula for Grothendieck polynomials that generalizes Magyar's Demazure operator formula for Schubert polynomials. 
		Our proofs are purely combinatorial, contrasting with the geometric and representation theoretic tools used by Magyar.
		We apply our formula to prove a necessary divisibility condition for a monomial to appear in a given Grothendieck polynomial.
	\end{abstract}
		
	\section{Introduction}
	
	Schubert polynomials $\mathfrak{S}_w$ and Grothendieck polynomials $\mathfrak{G}_w$ are multivariate polynomials associated to permutations $w\in S_n$. Schubert (resp.\ Grothendieck) polynomials were introduced by Lascoux and Sch\"utzenberger in \cite{LS1,LS2} as a set of distinguished representatives for the cohomology (resp.\ K-theoretic) classes of Schubert cycles in the cohomology ring (resp.\ K-theory) of the flag variety of $\mathbb{C}^n$. Since their introduction, Schubert polynomials have become central objects in algebraic combinatorics. Their rich combinatorial structure is evident from the myriad formulas, such as \cite{laddermoves, BJS, FKschub, nilcoxeter, thomas, lenart, prismtableaux,balancedtableaux}. Many formulas for Schubert polynomials generalize to Grothendieck polynomials.
	Recent work \cite{grothlattice,grothice,grothcoloredlattice} has uncovered novel formulas for Grothendieck polynomials and their generalizations.	
	

	In this paper, we focus on the following algebraic formula for Schubert polynomials due to Magyar \cite{magyar}:
	\begin{equation} \label{magyar} \mathfrak{S}_w = \omega_1^{k_1}\cdots\omega_n^{k_n}\pi_{i_1}(\omega_{i_1}^{m_1} \pi_{i_2}(\omega_{i_2}^{m_2}\cdots \pi_{i_\l}(\omega_{i_\l}^{m_\l})\cdots )). 
	\end{equation}
	The formula uses combinatorial data 
	\[\bm{i}(w)=(i_1,\ldots,i_\l),\quad \bm{k}(w)=(k_1,\ldots,k_n),\quad \mbox{and}\quad \bm{m}(w)=(m_1,\ldots,m_\l)\]
	associated to Rothe diagrams to write Schubert polynomials in terms of the Demazure operators $\pi_j$ and fundamental weights $\omega_j=x_1\cdots x_j$. Unlike the usual recursive definition of Schubert polynomials through divided difference operators, Magyar's formula is ``ascending'': the degree weakly increases at each step of the formula. We generalize Magyar's formula to Grothendieck polynomials, consequently giving a new proof for Schubert polynomials in the process. We now state our main theorem; for the necessary definitions see Section \ref{sec:background}.
	
	\begin{theorem}
		\label{thm:groth-for-intro} 		
		For  any $w\in S_n$,  the Grothendieck polynomial 	 $\mathfrak{G}_w$ is given by
			\[
			 \mathfrak{G}_w = \omega_1^{k_1}\cdots\omega_n^{k_n}\overline{\pi}_{i_1}(\omega_{i_1}^{m_1}\overline{\pi}_{i_2}(\omega_{i_2}^{m_2}\cdots\overline{\pi}_{i_\l}(\omega_{i_\l}^{m_\l})\cdots)),
			\]
			where $(\bm{i}(w), \bm{k}(w), \bm{m}(w))$ is the orthodontic sequence of $w$, $\omega_i$ is the fundamental weight $\omega_i=x_1\cdots x_i$, and $\overline{\pi}_j(f) = \pi_j((1-x_{j+1})f)$.
	 \end{theorem}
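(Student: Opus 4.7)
I will proceed by induction on the length $\ell$ of the orthodontic sequence $\bm{i}(w) = (i_1, \ldots, i_\ell)$, following the shape of Magyar's induction in \cite{magyar} for Schubert polynomials but carefully tracking the additional $(1-x_{j+1})$ factor that distinguishes $\overline{\pi}_j$ from $\pi_j$.

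The base case is $\ell = 0$, where $w$ must be a dominant permutation and the Rothe diagram $D(w)$ is a left-justified Young diagram. In that case both $\mathfrak{G}_w$ and $\mathfrak{S}_w$ collapse to the single monomial $\omega_1^{k_1} \cdots \omega_n^{k_n}$ read off the rows of $D(w)$, and the formula holds with no operators applied.

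For the inductive step, I would peel off the outermost operator and write the right-hand side as $\omega_1^{k_1} \cdots \omega_n^{k_n}\, \overline{\pi}_{i_1}(\omega_{i_1}^{m_1} \cdot E)$, where $E$ is the inner nested expression. The natural next step is to recognize $E$ (up to an explicit monomial prefactor) as the orthodontia formula for an auxiliary permutation $w'$ whose Rothe diagram is obtained from $D(w)$ by undoing the last comb installed by the orthodontic procedure; the inductive hypothesis then identifies $E$ with $\mathfrak{G}_{w'}$ times that prefactor. The theorem thus reduces to a single-step operator identity of the form
\[
\omega_1^{k_1} \cdots \omega_n^{k_n}\, \overline{\pi}_{i_1}\!\bigl(\omega_{i_1}^{m_1}\, \mathfrak{G}_{w'}\bigr) \;=\; \mathfrak{G}_w.
\]

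The main obstacle will be establishing this single-step identity combinatorially. One cannot simply invoke the Grothendieck recursion $\overline{\pi}_j \mathfrak{G}_{v s_j} = \mathfrak{G}_v$, because the orthodontic update from $w'$ to $w$ is generally not a single right multiplication by a transposition; rather, multiplication by $\omega_{i_1}^{m_1}$ is what pre-inserts the missing dominant column of boxes, and $\overline{\pi}_{i_1}$ then slides them into their proper positions. I expect the cleanest approach is to exhibit both sides as generating functions over a common combinatorial model, such as pipe dreams or set-valued tableaux, and to show that applying $\overline{\pi}_{i_1}$ to $\omega_{i_1}^{m_1}\, \mathfrak{G}_{w'}$ produces exactly the ensemble indexing $\mathfrak{G}_w$. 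In this framework the factor $(1-x_{j+1})$ in $\overline{\pi}_j$ should be responsible for generating the $K$-theoretic contributions (e.g.\ hex/bump crossings, or extra entries in set-valued boxes) that account for the higher-degree monomials in $\mathfrak{G}_w$ beyond those in $\mathfrak{S}_w$. The delicate bookkeeping — verifying that these extra contributions appear with the correct signs and without overcounting — is where I expect the bulk of the work to lie, and a consistency check at the top-degree piece should recover Magyar's original Schubert identity as a corollary.
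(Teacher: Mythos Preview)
Your induction on $\ell$ has a structural gap: after a \emph{single} orthodontia step the resulting diagram is typically \emph{not} a Rothe diagram, so there is no auxiliary permutation $w'$ with $\mathscr{G}_{w'}$ equal to the inner expression $E$ (even up to a monomial). For instance, for $w=31542$ the first step (swapping rows $2$ and $3$ after removing the column $\{1\}$) produces the diagram with columns $\emptyset,\{1,2,4\},\emptyset,\{2\},\emptyset$; a short check shows this cannot be $D(w')$ for any $w'\in S_5$ (column~$1$ empty forces $w'(1)=1$, while column~$2$ containing $1$ forces $w'(1)>2$). So the ``single-step identity'' you isolate has no permutation to attach to its right-hand side, and the pipe-dream or set-valued-tableau bijection you sketch has no target model.

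The paper's fix is to batch the peeling. One first introduces a \emph{sort} operation relating $\mathfrak{G}_w$ to $\mathfrak{G}_{w_{\sort}}$ by a monomial factor (via Lenart's Monk rule), and then, for a sorted $w$ with uppermost gap of size $\beta$, peels off all $\beta$ operators $\overline{\pi}_{i_1}\overline{\pi}_{i_1-1}\cdots\overline{\pi}_{\alpha+1}$ at once. That many steps \emph{does} land on the genuine Rothe diagram of $ws_{i_1}\cdots s_{\alpha+1}$. The induction is then over a custom partial order generated by these two moves, not over $\ell$. The technical core is not a bijection but an operator identity: under the symmetry $\partial_{\alpha+2}g=\cdots=\partial_{i_1}g=0$ (which $g=x_{\alpha+1}^{-\beta}\mathfrak{G}_{ws_{i_1}\cdots s_{\alpha+1}}$ enjoys since $ws_{i_1}\cdots s_{\alpha+1}$ has ascents at $\alpha+2,\ldots,i_1$), one proves
\[
\overline{\pi}_{i_1}\cdots\overline{\pi}_{\alpha+1}(g)\;=\;\overline{\partial}_{i_1}\cdots\overline{\partial}_{\alpha+1}(x_{\alpha+1}^{\beta}g),
\]
and the right-hand side is $\mathfrak{G}_w$ by the standard recursion. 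Your proposal would need both the batching and this symmetry-dependent operator identity; neither is visible in the one-step plan.
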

	
	 Our proof of Theorem \ref{thm:groth-for-intro}  is purely combinatorial, and yields a combinatorial proof of \eqref{magyar} as well -- contrasting with the geometric and representation theoretic tools used in \cite{magyar}. We apply Theorem \ref{thm:groth-for-intro} and the inductive tools developed for its proof to derive Theorem \ref{thm:upwardsdivisibility-intro}, a new divisibility restriction for monomials appearing in a Grothendieck polynomial. We refer to Section \ref{sec:application} for notation and details.
	 
	 \begin{theorem}
	 	\label{thm:upwardsdivisibility-intro}
	 	For any permutation $w\in S_n$, all monomials appearing in $\mathfrak{G}_w$ divide $\bm{x}^{\overline{D(w)}}$.
	 \end{theorem}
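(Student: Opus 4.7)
The plan is to establish Theorem~\ref{thm:upwardsdivisibility-intro} by induction unpeeling the nested orthodontic formula of Theorem~\ref{thm:groth-for-intro}. At each stage of the formula I would track an intermediate diagram whose associated monomial serves as a divisibility bound for the intermediate polynomial. First I would decompose $\bm{x}^{\overline{D(w)}}$ along the orthodontic data, so that the base monomial $\omega_1^{k_1}\cdots\omega_n^{k_n}$ corresponds to the innermost portion of $\overline{D(w)}$, and each pair $(\omega_{i_t}^{m_t},\overline{\pi}_{i_t})$ corresponds to placing $m_t$ new columns of height $i_t$ and then propagating exponent mass one row upward. This yields a chain of intermediate exponent vectors $\alpha^{(\ell)},\alpha^{(\ell-1)},\ldots,\alpha^{(0)}$, with $\bm{x}^{\alpha^{(0)}} = \bm{x}^{\overline{D(w)}}$, each recording the boxes of $\overline{D(w)}$ accounted for by the operators outside the $t$-th layer.

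The inductive engine is a lemma about the isobaric Demazure operator: if every monomial of $f$ divides $\bm{x}^\alpha$, then every monomial of $\overline{\pi}_j(f) = \pi_j\bigl((1-x_{j+1})f\bigr)$ divides $\bm{x}^{\alpha'}$ for an explicit update $\alpha'$ (roughly, replacing the $j$-th and $(j+1)$-st coordinates by their maximum, with lower coordinates untouched). This follows because $\pi_j(x_j^a x_{j+1}^b)$ is supported on monomials $x_j^c x_{j+1}^{a+b-c}$ with $c$ between $\min(a,b)$ and $\max(a,b)$, so the individual $x_j$- and $x_{j+1}$-exponents never exceed $\max(a,b)$; the $(1-x_{j+1})$ prefactor can only produce cancellations that decrease exponents. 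Multiplication by $\omega_{i_t}^{m_t}$, on the other hand, uniformly bumps up the first $i_t$ exponents by $m_t$, matching the insertion of $m_t$ columns of height $i_t$ into the intermediate diagram.

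Chaining these two observations along the orthodontic sequence, the induction starts from $\omega_1^{k_1}\cdots\omega_n^{k_n}$, which trivially divides $\bm{x}^{\alpha^{(\ell)}}$, and propagates outward to yield the bound $\bm{x}^{\overline{D(w)}}$ for $\mathfrak{G}_w$. The main obstacle will be identifying the correct intermediate diagrams $D^{(t)}$ and verifying that the diagrammatic update (adding columns and pushing boxes up) exactly reproduces the algebraic update $\alpha \mapsto \alpha'$ effected by $\overline{\pi}_j$. Since the orthodontic sequence can revisit a given column index $i_t$ in a prescribed order, one must check that repeated applications of $\overline{\pi}_j$ interleaved with multiplications by $\omega_{i_t}$ compose correctly to produce the expected row-by-row growth of $\overline{D(w)}$; this combinatorial-algebraic compatibility is the technical heart of the proof.
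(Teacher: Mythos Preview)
Your approach is genuinely different from the paper's. The paper does \emph{not} unpeel the orthodontia formula layer by layer; instead it inducts over permutations via the orthodontic sort order $\leq_{\mathrm{os}}$. For sorted $w$ it uses the identity
\[
\mathfrak{G}_w=\overline{\pi}_{i_1}\cdots\overline{\pi}_{\alpha+1}\bigl(x_{\alpha+1}^{-\beta}\mathfrak{G}_{ws_{i_1}\cdots s_{\alpha+1}}\bigr),
\]
together with a lemma (Lemma~\ref{lem:exponentchange}) comparing $\overline{D(w)}$ with $\overline{D(ws_{i_1}\cdots s_{\alpha+1})}$; the crucial fact there is that the exponents $c_{\alpha+1},\ldots,c_{i_1+1}$ of $x_{\alpha+1}^{-\beta}\bm{x}^{\overline{D(ws_{i_1}\cdots s_{\alpha+1})}}$ satisfy $c_{\alpha+1}=\max(c_{\alpha+1},\ldots,c_{i_1+1})$, so the repeated ``replace by max'' operations coming from $\overline{\pi}_{i_1}\cdots\overline{\pi}_{\alpha+1}$ (your Lemma~6.3 / their Lemma~\ref{lem:pijdivisibility}) land exactly on the exponents of $\bm{x}^{\overline{D(w)}}$. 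The advantage of the paper's route is that every intermediate object is a Grothendieck polynomial of an honest permutation with a Rothe diagram, so one only ever compares two Rothe diagrams.

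Your plan is plausible but has a real gap at exactly the spot you flag. The update ``replace $\alpha_j,\alpha_{j+1}$ by their max'' gives an upper bound for $\overline{\pi}_j$, but it matches the upper-closure exponent of the intermediate orthodontia diagram only when, in the diagram just before the swap $s_{i_t}$, no column has maximum exactly $i_t$. If such a column existed, the true upper closure after swapping would grow by less than your update, and your running bound would overshoot and might ultimately exceed $\bm{x}^{\overline{D(w)}}$. This no-such-column condition does hold for the intermediate diagrams in Magyar's algorithm (it follows from strong separation plus the column ordering: the leading column has $i_t\notin C_1$, $i_t+1\in C_1$, and any later column $C'$ with $\max C'=i_t$ would force $C_1\setminus C'\not\preccurlyeq C'\setminus C_1$), but you would need to state and prove this, and also check that strong separation with the correct ordering persists through every row swap. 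That is the actual content of the ``combinatorial--algebraic compatibility'' you mention, and without it the argument is incomplete. A small separate issue: the induction should start from the innermost factor $\omega_{i_\ell}^{m_\ell}$ and propagate outward, with $\omega_1^{k_1}\cdots\omega_n^{k_n}$ applied \emph{last}; your description reverses this.
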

	 
	\subsection*{Outline of this paper}
	
	Section~\ref{sec:background} gives background on Schubert and Grothendieck polynomials, and explains the machinery behind Magyar's orthodontia formula. In Section \ref{sec:sortpermgroth}, we define the class of sorted permutations and introduce a projection onto this class. In Section \ref{sec:orthobruhat}, we track changes in orthodontia upon sorting or moving up in weak Bruhat order. In Section \ref{sec:grothortho}, we construct some inductive tools and prove Theorem \ref{thm:groth-for-intro}. In Section \ref{sec:application}, we apply Theorem \ref{thm:groth-for-intro} to study the supports of Grothendieck polynomials, proving Theorem \ref{thm:upwardsdivisibility-intro}.  We conclude with a brief discussion of strongly-separated diagrams in Section \ref{sec:stronglysep}.

	\section{Background}
	\label{sec:background}
	\subsection{Conventions}
	For $m,n\in \mathbb{N}$, we use the notation $[m,n]$ to mean the set $\{m,m+1,\ldots,n \}$, and the notation $[n]$ to mean $\{1,2,\ldots,n \}$. For $j\in [n-1]$, $s_j$ will denote the adjacent transposition in the symmetric group $S_n$ swapping $j$ and $j+1$. Throughout, we will take permutations as acting on the right, switching positions, not values. For example $ws_1$ equals $w$ with the numbers $w(1)$ and $w(2)$ swapped.

	\subsection{Difference Operators on Polynomials}
	We recall the definitions of four types of operators on polynomial rings.
	\begin{definition}
		Fix any $n\geq 0$. The \emph{divided difference operators} $\partial_j$ for $j\in[n-1]$ are operators on the polynomial ring $\mathbb{C}[x_1,\ldots,x_n]$ defined by
		\[\partial_j(f)=\frac{f-(s_j\cdot f)}{x_j-x_{j+1}}
		=\frac{f(x_1,\ldots,x_n)-f(x_1,\ldots,x_{j-1},x_{j+1},x_j,x_{j+2},\ldots,x_n)}{x_j-x_{j+1}}.\]
		The \emph{Demazure operators} $\pi_j$, the \emph{isobaric divided difference operators} $\overline{\partial}_j$, and the \emph{Demazure--Lascoux operators} $\overline{\pi}_j$ are defined on $\mathbb{C}[x_1,\ldots,x_n]$ respectively by
		\begin{align*}
			\pi_j(f) &= \partial_j(x_j f),\\
			\overline{\partial}_j(f)&=\partial_j((1-x_{j+1})f),\\
			\overline{\pi}_j(f) &= \partial_j(x_j(1-x_{j+1})f).
		\end{align*}
	\end{definition}
	
	The following lemmas collect several basic properties of divided and isobaric divided difference operators which will be used frequently.
	\begin{lemma}
		The divided difference operators satisfy the following properties.
		\begin{itemize}
			\item $\partial_j\partial_j=0$ for all $j$.
			\item $\partial_j\partial_k=\partial_k\partial_j$ whenever $|j-k|>1$.
			\item $\partial_j\partial_{j+1}\partial_j=\partial_{j+1}\partial_j\partial_{j+1}$.
			\item $\partial_j(f) = 0$ if and only if $f$ is symmetric in $x_j$ and $x_{j+1}$.
			\item If $\partial_j(f)=0$, then $\partial(fg)=f\partial(g)$.
		\end{itemize}
	\end{lemma}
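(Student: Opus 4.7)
The plan is to verify each bullet via direct computation from the definition of $\partial_j$, establishing structural facts that let most items follow from one central observation. First I would note the key invariance: $s_j$ negates both the numerator $f - s_j f$ and the denominator $x_j - x_{j+1}$, so $s_j(\partial_j f) = \partial_j f$. In other words, $\partial_j f$ is always symmetric in $x_j, x_{j+1}$. This immediately yields the ``if'' direction of the fourth bullet (if $f$ is $s_j$-symmetric, the numerator vanishes). The converse direction follows because the division by $x_j - x_{j+1}$ is exact in the polynomial ring, so $\partial_j f = 0$ forces the numerator $f - s_j f$ itself to be zero. The first bullet is then a corollary of the fourth: since $\partial_j f$ is $s_j$-symmetric, a second application of $\partial_j$ returns zero.

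For the commutativity $\partial_j\partial_k = \partial_k\partial_j$ when $|j-k|>1$, the transpositions $s_j$ and $s_k$ move disjoint pairs of variables, so $s_j$ commutes with $s_k$ and with multiplication by $(x_k-x_{k+1})^{-1}$. Expanding both triple fractions from the definition yields the same four-term symmetric expression.

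The braid relation is the only non-routine item, and I expect it to be the main obstacle. The plan is to show that both $\partial_j\partial_{j+1}\partial_j$ and $\partial_{j+1}\partial_j\partial_{j+1}$ produce the $S_3$-antisymmetrization in the variables $x_j, x_{j+1}, x_{j+2}$, rescaled by the Vandermonde. Concretely, for any polynomial $f$, I would expand and verify
\[
(x_j - x_{j+1})(x_j - x_{j+2})(x_{j+1} - x_{j+2}) \cdot \partial_j\partial_{j+1}\partial_j(f) = \sum_{\sigma \in S_3} \mathrm{sgn}(\sigma)\, \sigma\cdot f,
\]
where $S_3$ acts by permuting $\{x_j, x_{j+1}, x_{j+2}\}$. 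The identical calculation for $\partial_{j+1}\partial_j\partial_{j+1}$ yields the same right-hand side (the antisymmetrization is manifestly independent of the factorization into adjacent transpositions), so the two triple compositions agree. Carrying out the expansion cleanly requires tracking six terms and the three denominator clearings, but is a standard finite verification.

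Finally, the fifth bullet is a direct consequence of the fourth: the hypothesis $\partial_j(f) = 0$ means $s_j f = f$, hence $s_j(fg) = f \cdot s_j g$, and therefore $\partial_j(fg) = (fg - f\cdot s_j g)/(x_j - x_{j+1}) = f\,\partial_j(g)$.
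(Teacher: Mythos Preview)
Your proposal is correct and follows the standard route: the $s_j$-invariance of $\partial_j f$ gives the fourth bullet, from which the first and fifth follow immediately; disjoint-variable commutation gives the second; and identifying both triple compositions with the $S_3$-antisymmetrization over the Vandermonde handles the braid relation. The paper, by contrast, gives no proof at all for this lemma---it is stated as a list of well-known properties, with the reader presumably referred to standard references such as Manivel. So there is nothing to compare against; you have simply supplied the routine verification that the authors chose to omit.
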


	\begin{lemma}
		The isobaric divided difference operators satisfy the following properties.
		\begin{itemize}
			\item $\overline{\partial}_j\overline{\partial}_j=\overline{\partial}_j$ for all $j$.
			\item $\overline{\partial}_j\overline{\partial}_k=\overline{\partial}_k\overline{\partial}_j$ whenever $|j-k|>1$.
			\item $\overline{\partial}_j\overline{\partial}_{j+1}\overline{\partial}_j=\overline{\partial}_{j+1}\overline{\partial}_j\overline{\partial}_{j+1}$.
			\item $\overline{\partial}_j(f)$ is symmetric in $x_j$ and $x_{j+1}$.
		\end{itemize}
	\end{lemma}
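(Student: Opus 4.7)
My plan is to verify each of the four properties in turn, leveraging the corresponding properties of $\partial_j$ stated in the previous lemma. The symmetry property is immediate: for any polynomial $g$, $\partial_j(g)$ is a quotient of two $s_j$-antisymmetric expressions and hence is invariant under $s_j$, so setting $g = (1-x_{j+1})f$ gives that $\overline{\partial}_j(f)$ is symmetric in $x_j, x_{j+1}$.

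Idempotence then follows quickly from symmetry. Given $f$, let $g = \overline{\partial}_j(f)$; by the symmetry property, $s_j g = g$, so $\partial_j(g) = 0$. A direct calculation using $s_j g = g$ gives $\partial_j(x_{j+1} g) = (x_{j+1} g - x_j g)/(x_j - x_{j+1}) = -g$, whence $\overline{\partial}_j(g) = \partial_j(g) - \partial_j(x_{j+1} g) = g$. For the commutation-at-distance relation, when $|j-k|>1$ the factor $1-x_{j+1}$ is symmetric in $x_k, x_{k+1}$, and similarly $1-x_{k+1}$ is symmetric in $x_j, x_{j+1}$; by the last bullet of the $\partial_j$ lemma, each factor passes freely through the corresponding $\partial$. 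Combined with $\partial_j\partial_k = \partial_k\partial_j$, both $\overline{\partial}_j\overline{\partial}_k(f)$ and $\overline{\partial}_k\overline{\partial}_j(f)$ reduce to $\partial_j\partial_k((1-x_{j+1})(1-x_{k+1})f)$.

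The braid relation is the main obstacle, since $1-x_{j+1}$ is not symmetric in $x_{j+1}, x_{j+2}$ and cannot be freely moved past $\partial_{j+1}$. My approach is to first derive the operator identity $\overline{\partial}_j(f) = (1-x_{j+1})\partial_j(f) + s_j(f)$, which follows from the Leibniz-type rule $\partial_j(x_{j+1} f) = x_{j+1}\partial_j(f) - s_j(f)$ (an immediate consequence of the definition of $\partial_j$). With this identity in hand, I would expand each side of the braid relation and reduce, using the braid and commutation relations for $\partial_j$ from the previous lemma, the analogous relations for $s_j$, and the interactions $s_j \partial_j = \partial_j$ and $\partial_j s_j = -\partial_j$ (all obtained directly from the definitions). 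Carrying out this algebraic reduction is where the bulk of the work lies; as a safety net, one can instead verify the braid by direct computation on monomials $x_j^a x_{j+1}^b x_{j+2}^c$, since factors in $x_i$ with $i \notin \{j,j+1,j+2\}$ pass freely through both $\overline{\partial}_j$ and $\overline{\partial}_{j+1}$.
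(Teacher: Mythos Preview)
Your proposal is correct. The paper itself states this lemma without proof, treating the properties of $\overline{\partial}_j$ as standard background (just as it does for the analogous lemma on $\partial_j$). Your arguments for symmetry, idempotence, and distant commutation are complete and accurate; in particular, the computation $\overline{\partial}_j(g)=\partial_j(g)-\partial_j(x_{j+1}g)=0-(-g)=g$ for $s_j$-invariant $g$ is exactly right. For the braid relation, your operator identity $\overline{\partial}_j=(1-x_{j+1})\partial_j+s_j$ is valid (it follows from the Leibniz rule $\partial_j(x_{j+1}f)=x_{j+1}\partial_j(f)-s_j(f)$ as you indicate), and expanding both sides in terms of $\partial_j$, $\partial_{j+1}$, $s_j$, $s_{j+1}$ and simplifying via the $0$-Hecke relations $\partial_j s_j=-\partial_j$, $s_j\partial_j=\partial_j$, together with the braid relations for $\partial$ and $s$, does yield the result; the monomial fallback is also sound by linearity. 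In short, you supply a genuine proof where the paper offers none, and your route via the rewriting $\overline{\partial}_j=(1-x_{j+1})\partial_j+s_j$ is a clean way to inherit the braid relation from that of $\partial_j$.
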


	\subsection{Schubert and Grothendieck Polynomials}
	
	\begin{definition}
		The \emph{Schubert polynomial} $\mathfrak{S}_w$ of $w\in S_n$ is defined recursively on the weak Bruhat order. Let $w_0=n \hspace{.1cm} n\!-\!1 \hspace{.1cm} \cdots \hspace{.1cm} 2 \hspace{.1cm} 1 \in S_n$, the longest permutation in $S_n$. If $w\neq w_0$ then there is $j\in [n-1]$ with $w(j)<w(j+1)$ (called an \emph{ascent} of $w$). The polynomial $\mathfrak{S}_w$ is defined by
		\begin{align*}
		\mathfrak{S}_w=\begin{cases}
		x_1^{n-1}x_2^{n-2}\cdots x_{n-1}&\mbox{ if } w=w_0,\\
		\partial_j \mathfrak{S}_{ws_j} &\mbox{ if } w(j)<w(j+1).
		\end{cases}
		\end{align*}
	\end{definition}
	
	\begin{definition}
		The \emph{Grothendieck polynomial} $\mathfrak{G}_w$ of $w\in S_n$ is defined analogously to the Schubert polynomial,
		with
		\begin{align*}
		\mathfrak{G}_w=\begin{cases}
		x_1^{n-1}x_2^{n-2}\cdots x_{n-1}&\mbox{ if } w=w_0,\\
		\overline{\partial}_j \mathfrak{G}_{ws_j} &\mbox{ if } w(j)<w(j+1).
		\end{cases}
		\end{align*}	
	\end{definition}
	\begin{proposition}
		\label{prop:repeatoperators}
		Let $w\in S_n$ with $w(j)<w(j+1)$. Then
		\[\partial_j(\mathfrak{G}_w)=0 \quad \mbox{and}\quad \overline{\partial}_j(\mathfrak{G}_w)=\mathfrak{G}_w.\]
	\end{proposition}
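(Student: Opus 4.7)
The plan is to prove both claims directly by unfolding the recursive definition of $\mathfrak{G}_w$ once and invoking the algebraic identities collected in Lemmas 2.1 and 2.2. Since $w(j)<w(j+1)$, the defining recursion gives $\mathfrak{G}_w = \overline{\partial}_j \mathfrak{G}_{ws_j}$, so in both claims we are asking what happens when we apply $\partial_j$ or $\overline{\partial}_j$ to something that already lies in the image of $\overline{\partial}_j$.

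For the first claim, I would give two interchangeable arguments and pick whichever reads more cleanly. The quickest is: expanding the definition $\overline{\partial}_j(f) = \partial_j((1-x_{j+1})f)$ yields
\[
\partial_j(\mathfrak{G}_w) = \partial_j\bigl(\overline{\partial}_j \mathfrak{G}_{ws_j}\bigr) = \partial_j \partial_j\bigl((1-x_{j+1})\mathfrak{G}_{ws_j}\bigr) = 0,
\]
using the first bullet of Lemma 2.1. The conceptual reformulation is that by the last bullet of Lemma 2.2 the polynomial $\overline{\partial}_j \mathfrak{G}_{ws_j}$ is symmetric in $x_j$ and $x_{j+1}$, and divided difference operators annihilate such polynomials by the fourth bullet of Lemma 2.1.

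For the second claim I would apply the idempotence of $\overline{\partial}_j$ directly:
\[
\overline{\partial}_j(\mathfrak{G}_w) = \overline{\partial}_j\bigl(\overline{\partial}_j \mathfrak{G}_{ws_j}\bigr) = \overline{\partial}_j \mathfrak{G}_{ws_j} = \mathfrak{G}_w,
\]
invoking the first bullet of Lemma 2.2.

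There is no substantive obstacle here; the proposition is essentially an unpacking of the defining recursion together with the pointwise identities $\partial_j^2 = 0$ and $\overline{\partial}_j^2 = \overline{\partial}_j$. The only choice to make is cosmetic, namely whether to phrase the vanishing of $\partial_j(\mathfrak{G}_w)$ via $\partial_j^2=0$ or via the symmetry of $\overline{\partial}_j$-images; I would favor the former for uniformity with the treatment of $\overline{\partial}_j(\mathfrak{G}_w) = \mathfrak{G}_w$.
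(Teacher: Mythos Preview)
Your proposal is correct and matches the paper's own argument: the paper merely says the conclusions follow readily from the basic properties of $\partial_j$ and $\overline{\partial}_j$ together with the recursive definition of $\mathfrak{G}_w$, and you have spelled out exactly those details.
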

	\begin{proof}
		The conclusions follow readily from the basic properties of $\partial_k$ and $\overline{\partial}_k$, together with the recursive definition of $\mathfrak{G}_w$.
	\end{proof}
	
	It can be seen from the recursive definitions that $\mathfrak{S}_w$ is homogeneous of degree equal to the number of inversions of $w$, and equals the lowest-degree nonzero homogeneous component of $\mathfrak{G}_w$. See \cite{manivel} for a deeper introduction to Schubert polynomials.
	
	\subsection{Orthodontia of Diagrams}
	We describe the orthodontia algorithm for diagrams due to Magyar in \cite{magyar}. We closely follow the exposition of \cite{zeroone}.
		
	By a \emph{diagram}, we mean a subset $D\subseteq [n]^2$, the $n\times n$ grid. We view $D$ from a column perspective as $D=(C_1,C_2,\ldots,C_n)$, where each $C_j$ is a subset of $[n]$. The subsets $C_j$ are naturally called the \emph{columns} of $D$. Graphically, we draw $D$ as a collection of boxes $(i,j)$ in a grid, viewing an element $i\in C_j$ as a box in row $i$ and column $j$ (reading the indices in the same way as matrix notation). There is a canonical diagram associated to any permutation.
	
	\begin{definition}
		The \emph{Rothe diagram} $D(w)$ of a permutation $w\in S_n$ is the diagram
		\[ D(w)=\{(i,j)\in [n]^2 \mid i<w^{-1}(j)\mbox{ and } j<w(i) \}. \]
		$D(w)$ can be visualized as the set of boxes left in the $n\times n$ grid 
		after you cross out all boxes weakly below $(i,w(i))$ in the same column, or weakly right of $(i,w(i))$ in the same row for each $i\in [n]$.
	\end{definition}
	
	\begin{example}
		If $w=31542$, then 
		\begin{center}
			\includegraphics[scale=1]{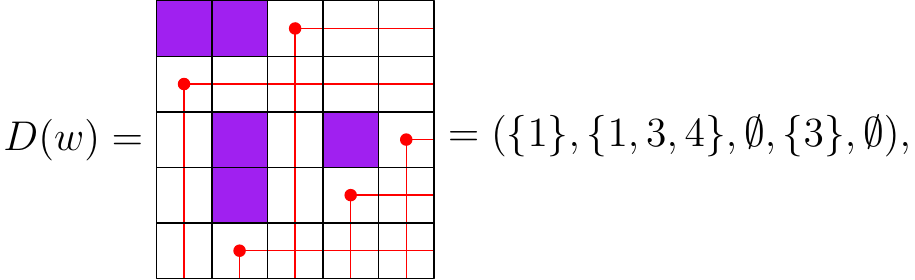}
		\end{center}
		where we indicate the boxes removed with red lines.
	\end{example}
	
	We now explain Magyar's orthodontia algorithm. For a column $C\subseteq [n]$, let the \emph{multiplicity} $\mathrm{mult}_D(C)$ be the number of columns of $D$ which are equal to $C$. Let $D$ be the Rothe diagram of a permutation $w\in S_n$ with columns $C_1,C_2,\ldots,C_n$. We describe an algorithm to produce vectors
	\[\bm{i}(w)=(i_1,\ldots,i_\l),\quad \bm{k}(w)=(k_1,\ldots,k_n),\quad \mbox{and}\quad \bm{m}(w)=(m_1,\ldots,m_\l)\]
	from $D$. To begin the first step, for each $j\in [n]$ let $k_j=\mathrm{mult}_D([j])$, the number of columns of $D$ of the form $[j]$. Replace all such columns by empty columns for each $j$ to get a new diagram $D_-$.

	Given a column $C\subseteq [n]$, a \emph{missing tooth} of $C$ is a positive integer $i$ such that $i\notin C$, but $i+1\in C$. The only columns without missing teeth are the empty column and the intervals $[i]$. Hence the first nonempty column of $D_-$ (if there is any) contains a smallest missing tooth $i_1$. Switch rows $i_1$ and $i_1+1$ of $D_-$ to get a new diagram $D'$.
	
	In the second step, repeat the above with $D'$ in place of $D$. Specifically, let $m_1=\mathrm{mult}_{D'}([i_1])$ and replace all columns of the form $[i_1]$ in $D'$ by empty columns to get a new diagram $D_-'$. Find the smallest missing tooth $i_2$ of the first nonempty column of $D_-'$, and switch rows $i_2$ and $i_2+1$ of $D_-'$ to get a new diagram $D''$. Continue in this fashion until no nonempty columns remain. 
	
	
	\begin{definition}
		\label{def:ikmsequence}
		The triple $(\bm{i}(w),\bm{k}(w),\bm{m}(w))$ constructed in the preceding algorithm is called the \emph{orthodontic sequence} of $w$.
	\end{definition}
	
	\begin{example}
		If $w=31542$, then the orthodontic sequence algorithm produces the diagrams shown in Figure \ref{fig:orthoexample}. The sequence of missing teeth gives $\bm{i}(w)=(2,3,1)$, $\bm{k}(w)=(1,0,0,0,0)$, and $\bm{m}=(0,1,1)$.
	\end{example}

	\begin{figure}
		\begin{center}
			\includegraphics[scale=.85]{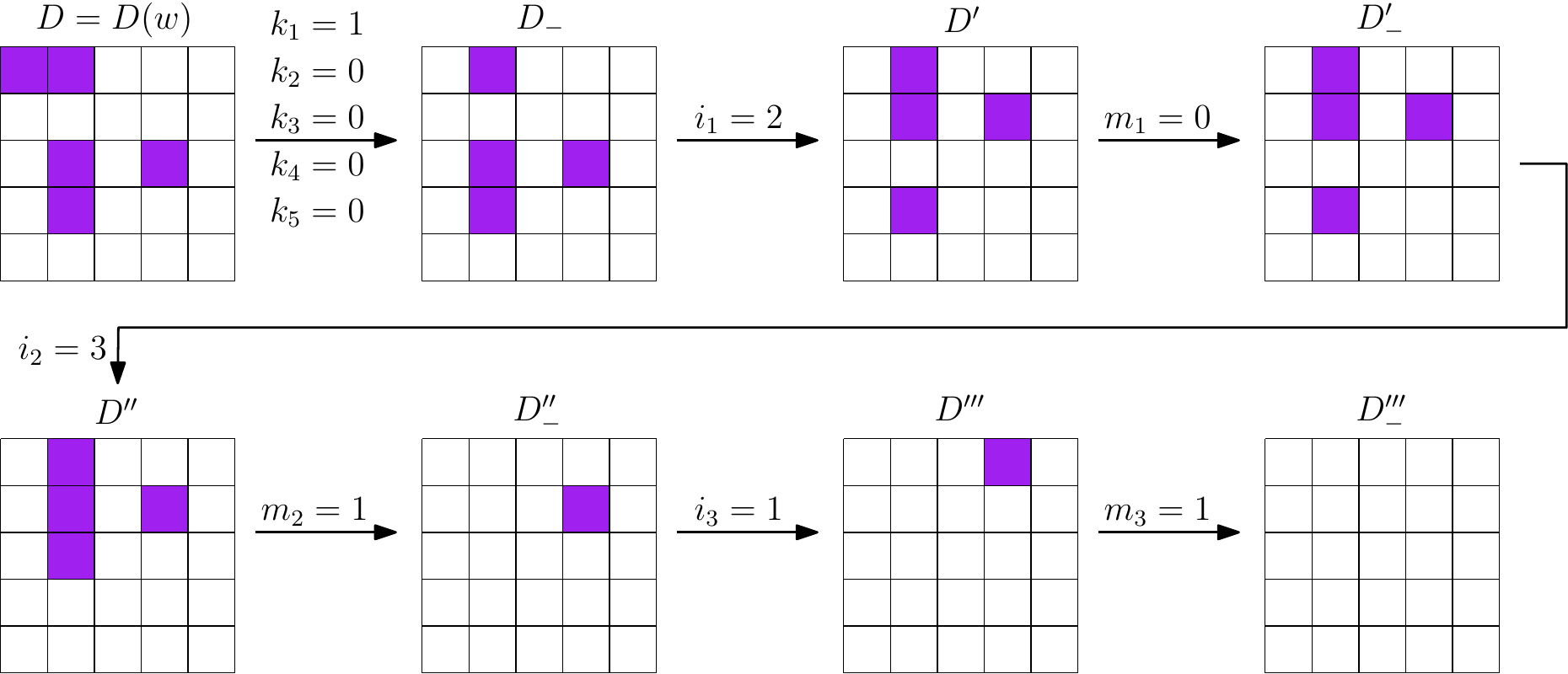}
		\end{center}
	\caption{Execution of the orthodontic sequence algorithm on $D(w)$ for $w=31542$.}
	\label{fig:orthoexample}
	\end{figure}
	
	\begin{remark}
		Magyar's orthodontia algorithm applies more generally to any strongly separated diagram $D$, after possibly changing the order of the columns. See Section \ref{sec:stronglysep} for a brief summary, or \cite{magyar} for further details.
	\end{remark}

	Our interest in orthodontia stems from the following orthodontic operator formula for Schubert polynomials, which we generalize to Grothendieck polynomials in Section \ref{sec:grothortho}.
	\begin{theorem}[{\cite[Proposition~15]{magyar}}]
		\label{thm:magyaroperatortheorem}
		Let $w\in S_n$ have orthodontic sequence 
		\[\bm{i}(w)=(i_1,\ldots,i_\l),\quad \bm{k}(w)=(k_1,\ldots,k_n),\quad \mbox{and}\quad \bm{m}(w)=(m_1,\ldots,m_\l).\]
		If $\pi_j=\partial_j x_j$ is the $j$th Demazure operator and $\omega_j$ denotes the fundamental weight $\omega_j=x_1x_2\cdots x_j$, then
		\[\mathfrak{S}_w = \omega_1^{k_1}\cdots\omega_n^{k_n}\pi_{i_1}(\omega_{i_1}^{m_1} \pi_{i_2}(\omega_{i_2}^{m_2}\cdots \pi_{i_\l}(\omega_{i_\l}^{m_\l})\cdots )). \]
	\end{theorem}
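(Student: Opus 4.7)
The plan is to deduce Magyar's formula for $\mathfrak{S}_w$ from Theorem \ref{thm:groth-for-intro} — the corresponding formula for $\mathfrak{G}_w$ — by extracting the lowest-degree homogeneous component of both sides. Since $\mathfrak{S}_w$ is precisely the lowest-degree homogeneous component of $\mathfrak{G}_w$ (of degree $\ell(w)$), and the orthodontic sequence $(\bm{i}(w), \bm{k}(w), \bm{m}(w))$ is identical in both formulas, the reduction to Magyar's formula is purely operator-theoretic.

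The key observation is that for any polynomial $f$ with lowest-degree homogeneous component $f_0$ of degree $d$, the lowest-degree homogeneous component of $\overline{\pi}_j(f)$ equals $\pi_j(f_0)$, provided $\pi_j(f_0)\neq 0$. Indeed, writing $\overline{\pi}_j(f) = \pi_j(f) - \pi_j(x_{j+1} f)$ and noting that $\pi_j$ preserves total degree while multiplication by $x_{j+1}$ strictly raises it by one, every contribution to $\overline{\pi}_j(f)$ besides $\pi_j(f_0)$ has degree strictly greater than $d$. Because each factor $\omega_{i_k}^{m_k}$ and $\omega_j^{k_j}$ appearing in Theorem \ref{thm:groth-for-intro} is a monomial, multiplying by these factors does not create lower-degree cancellations. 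Iterating the observation from the innermost nested operator outward, one replaces each $\overline{\pi}_{i_k}$ by $\pi_{i_k}$ upon restricting to lowest-degree components, yielding exactly the right-hand side of Magyar's formula.

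To make this rigorous, one must verify that at every stage of the nested extraction, $\pi_{i_k}$ applied to the current lowest-degree component is nonzero, so that no collapse in degree occurs. This reduces to a degree count: from the orthodontia construction, the total degree $\sum_{j=1}^n j\cdot k_j + \sum_{k=1}^{\l} i_k \cdot m_k$ can be shown to equal $|D(w)| = \ell(w)$, since each $[j]$-shaped column removal contributes $j \cdot k_j$ boxes and each missing-tooth swap preserves the total box count in the running diagram. Combined with the fact that $\mathfrak{G}_w$ has a nonvanishing lowest-degree component of degree $\ell(w)$, no degree collapse can occur during the extraction.

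The main obstacle is of course Theorem \ref{thm:groth-for-intro} itself; once it is proven combinatorially as planned in Section \ref{sec:grothortho}, the deduction of Magyar's theorem as above is essentially routine. An alternative, independent combinatorial proof would proceed by induction on the weak Bruhat order of $w$, tracking how the orthodontic sequence transforms under right multiplication by an adjacent transposition at an ascent — this parallels the inductive machinery developed in Sections \ref{sec:orthobruhat}--\ref{sec:grothortho} for the Grothendieck case, and one expects the analogous argument in the Schubert setting to be a strict simplification, since $\pi_j$ is degree-preserving and does not introduce the extra $(1-x_{j+1})$ correction term that complicates the Grothendieck case.
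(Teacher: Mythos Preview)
Your proposal is correct and matches the paper's own approach exactly: after proving Theorem~\ref{thm:groth-for-intro}, the paper deduces Theorem~\ref{thm:magyaroperatortheorem} in a single sentence, ``By taking the lowest degree homogeneous component of both sides, we recover the orthodontia formula for Schubert polynomials.'' Your elaboration---the decomposition $\overline{\pi}_j = \pi_j - \pi_j(x_{j+1}\cdot)$, the degree count $\sum j k_j + \sum i_k m_k = |D(w)| = \ell(w)$, and the resulting nonvanishing argument---correctly fills in the details the paper leaves implicit.
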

	
	\begin{example}
		For $w=31542$, it is easily checked that
		\begin{align*}
			\mathfrak{S}_w&=x_1\pi_2\pi_3(x_1x_2x_3\pi_1(x_1))\\
			&=x_2 x_3 x_1^3+x_2 x_4 x_1^3+x_3 x_4 x_1^3+x_2 x_3^2 x_1^2+x_2^2 x_3 x_1^2+x_2^2 x_4 x_1^2+x_3^2 x_4 x_1^2+x_2 x_3 x_4 x_1^2.
		\end{align*}
	\end{example}
	
	\section{Sorted Permutations and Grothendieck Polynomials}
	\label{sec:sortpermgroth}
	In this section, we define a special class of permutations, called sorted permutations. We introduce a projection map onto this class called $\sort$. We then relate the Grothendieck polynomials of any permutation and its image under $\sort$.
	
	\begin{definition}
		A \emph{standard interval} is a set of the form $[j]$ for some $j\geq 0$.
	\end{definition}

	Recall (see for instance \cite{manivel}) that a permutation $w$ is called \emph{dominant} if it satisfies any of the following equivalent conditions.
	\begin{itemize}
		\item There are no indices $i<j<k$ with $w(i)<w(k)<w(j)$ (called \emph{132-patterns}).
		\item The Rothe diagram $D(w)$ is the Young diagram of a partition.
		\item All columns of $D(w)$ are standard intervals.
	\end{itemize}
	
	\begin{definition}
		Fix a permutation $w \in S_n$. We define quantities $(h,C,\alpha,i_1,\beta)$ associated to $w$, collectively called the \emph{primary column data} of $w$. Assume first that $w$ is not dominant, so that $D(w)$ has a column which is not a standard interval. Let $h$ be the smallest integer such that the column $D(w)_{h+1}$ is not a standard interval. Denote by $C$ the column $C=D(w)_{h+1}\subseteq [n]$. Define $\alpha$ to be the largest integer such that $[\alpha]\subseteq C$. Denote by $i_1$ the smallest missing tooth of $C$. Lastly, set $\beta=i_1 - \alpha$, the size of the ``uppermost gap'' of $C$. If $w$ is dominant, simply set $h=n$, $C=\emptyset$, $\alpha=0$, $i_1=n$, and $\beta=n$. 
	\end{definition} 

	\begin{example}
		The permutation $w = 68432751$ has diagram shown in Figure \ref{fig:columndata}. The leftmost column that is not a standard interval is column five, so $h=4$ and $C=D(w)_5=\{1,2,6\}$. From $C$, we read off $\alpha=2$, $i_1=5$, and $\beta=3$.
	\end{example}

	\begin{figure}[ht]
		\begin{center}
			\includegraphics[scale=.85]{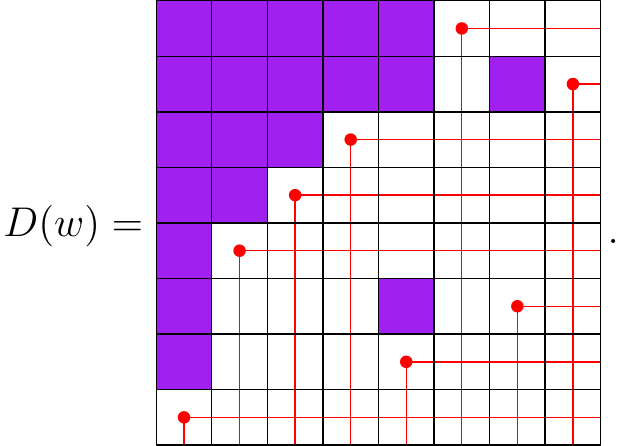}
		\end{center}
		\caption{The diagram of $w=68432751$.}
		\label{fig:columndata}
	\end{figure}

	\begin{lemma}
		\label{lem:bij}
		Any permutation $w$ restricts to a bijection
		\[[\alpha+1,i_1] \to [h-\beta+1,h ]. \]
		Moreover, the corresponding permutation $\sigma\in S_\beta$ is dominant.
	\end{lemma}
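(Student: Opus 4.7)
The plan is to reduce both claims to direct statements about the column data of $D(w)$. If $w$ is dominant, then $\alpha = 0$, $h = n = i_1 = \beta$, so $[\alpha+1, i_1] = [h - \beta + 1, h] = [n]$ and $\sigma = w$; the claim is immediate. Assume henceforth $w$ is non-dominant, and write $k_j$ for the length of the standard interval $D(w)_j = [k_j]$, for $j \in [h]$.

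I would first translate the definitions into concrete constraints on $w$. The condition $[\alpha] \subseteq C$ forces $w([\alpha]) \subseteq [h+2, n]$ and $w^{-1}(h+1) > \alpha$, while $i_1 + 1 \in C$ strengthens this to $w^{-1}(h+1) \geq i_1 + 2$. Together, these imply $w(i) \leq h$ for every $i \in [\alpha+1, i_1]$, since any such $i$ is outside $C$ but lies before $w^{-1}(h+1)$. Comparing this with the structure of $D(w)_h = [k_h]$ (which forces $w > h$ on $[k_h]$ and $w < h$ on $(k_h, w^{-1}(h))$) rules out both $\alpha < k_h$ and $\alpha > k_h$, pinning down $\alpha = k_h$.

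The heart of the argument is a biconditional: for $j \in [h]$, $w^{-1}(j) \leq i_1$ if and only if $k_j < i_1$. The backward direction is forced by $w^{-1}(j) > k_j$; for the forward direction, if $w^{-1}(j) > i_1$ and $k_j < i_1$, then $i_1 + 1$ lies in the open interval $(k_j, w^{-1}(j))$ (using that $w(i_1+1) > h+1$ rules out $w^{-1}(j) = i_1+1$), forcing $w(i_1+1) < j \leq h$; but $w(i_1+1) > h+1$, a contradiction. Combined with the count $|w([\alpha+1, i_1])| = \beta$ and the nesting $k_1 \geq \cdots \geq k_h = \alpha$, which makes $\{j \in [h] : k_j < i_1\}$ an upper set of the form $[s, h]$, a size count forces $s = h - \beta + 1$. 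This yields the bijection $w([\alpha+1, i_1]) = [h - \beta + 1, h]$.

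For the dominance of $\sigma$, I would identify $D(\sigma)$ in local $[\beta] \times [\beta]$ coordinates with the restriction of $D(w)$ to rows $[\alpha+1, i_1]$ and columns $[h - \beta+1, h]$, shifted. Each column $j \in [h-\beta+1, h]$ of $D(w)$ is $[k_j]$ with $\alpha \leq k_j \leq i_1 - 1$, so intersecting with the row range gives $[\alpha+1, k_j]$, which in local coordinates is the standard interval $[k_j - \alpha]$. The nesting $k_{h-\beta+1} \geq \cdots \geq k_h$ then produces weakly decreasing column lengths, making $D(\sigma)$ a Young diagram and $\sigma$ dominant. I expect the biconditional to be the main obstacle, since its forward direction requires a careful look at positions $i_1$ and $i_1+1$ simultaneously; once it is in hand, both the image of the bijection (by a size count) and the dominance of $\sigma$ (by monotonicity of column lengths) follow quickly.
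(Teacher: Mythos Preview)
Your argument is correct and complete; the only expository slip is that you have swapped the labels ``forward'' and ``backward'' on the biconditional. The implication $w^{-1}(j)\le i_1 \Rightarrow k_j<i_1$ is the one that follows instantly from $k_j<w^{-1}(j)$, while the implication $k_j<i_1 \Rightarrow w^{-1}(j)\le i_1$ is the one you prove by contradiction using $w(i_1+1)>h+1$. (Also, in that contradiction you should write $w(i_1+1)\le j$ rather than $<j$, though the conclusion is unaffected.)

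Your route differs from the paper's. The paper argues by pigeonhole: having shown $w([\alpha+1,i_1])\subseteq[h]$, it supposes some $w(k)<h-\beta+1$, picks a value $p\in[h-\beta+1,h]$ missed by $w|_{[\alpha+1,i_1]}$, and shows directly that $D(w)_p$ then contains $i_1+1$ but not $k$, contradicting that column $p$ is a standard interval. Dominance of $\sigma$ is dismissed in one line as a consequence of the columns left of $C$ being nested standard intervals containing $[\alpha]$. Your approach is more structural: you introduce the column lengths $k_j$, pin down $k_h=\alpha$, prove the biconditional $w^{-1}(j)\le i_1 \Leftrightarrow k_j<i_1$, and then use the monotonicity $k_1\ge\cdots\ge k_h$ together with a cardinality count to identify the image as $[h-\beta+1,h]$. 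This is longer but yields finer information (e.g.\ the explicit range $\alpha\le k_j\le i_1-1$ for $j\in[h-\beta+1,h]$), which you then reuse to write down $D(\sigma)$ column by column and read off dominance. The paper's argument is quicker; yours is more explicit and makes the later identification of $D(\sigma)$ with a sub-rectangle of $D(w)$ completely transparent.
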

	\begin{proof}
		If $w$ is dominant, then $\sigma=w$ and there is nothing to prove. Assume $w$ is not dominant. For any $k \in [\alpha + 1,i_1]$, we have $w(k) \leq h$ since the $(h+1)$-th column $C$ of $D(w)$ has no box in row $k$, but has a box in row $i_1 + 1 > k$.		
		
		Suppose there exists $k \in[\alpha + 1,i_1]$ with $w(k) < h-\beta + 1$. Then we can find $p \in [h - \beta + 1, h]$ with $p\notin  w\left([\alpha + 1,i_1]\right)$. Consider $w^{-1}(p)$. By assumption, $w^{-1}(p) \not \in \{\alpha + 1, \ldots, i_1\}$. Since $[\alpha]\subseteq C$, all columns left of $C$ also contain $[\alpha]$. In particular, $[\alpha]\subseteq D(w)_p$ so $w^{-1}(p)\notin [\alpha]$. Since $i_1+1\in C$, $w^{-1}(p) \neq i_1 + 1$. Thus $w^{-1}(p) > i_1+1$. Since $i_1+1\in C$, this implies $i_1+1\in D(w)_p$.
		
		As $w(k)<h-\beta+1\leq p$, it must be that $k\notin D(w)_p$. However, this implies that $D(w)_p$ is not a standard interval, a contradiction to the definition of $h$. The assertion that the induced permutation $\sigma\in S_\beta$ is dominant follows easily from the fact that all columns left of $C$ are standard intervals containing $[\alpha]$.
	\end{proof}
	
	\begin{definition}
		Given $w \in S_n$, define $\sigma(w)\in S_\beta$ to be the dominant permutation obtained by restricting $w$ to $[\alpha + 1, i_1]$. We say $w$ is \emph{sorted} if $\sigma(w)$ is the identity permutation. The \emph{sorting} of $w$, denoted $w_\sort$, is the permutation obtained from $w$ by reordering the numbers $w(\alpha+1), \ldots, w(i_1)$ to be in increasing order.
	\end{definition}
	
	\begin{example}
		The permutation $w = 68432751$ has $\sigma(w)=321$. This implies $w_\sort = 68234751$. The diagrams of $w$ and $w_\sort$ are 
		\begin{center}
			\includegraphics[scale=.85]{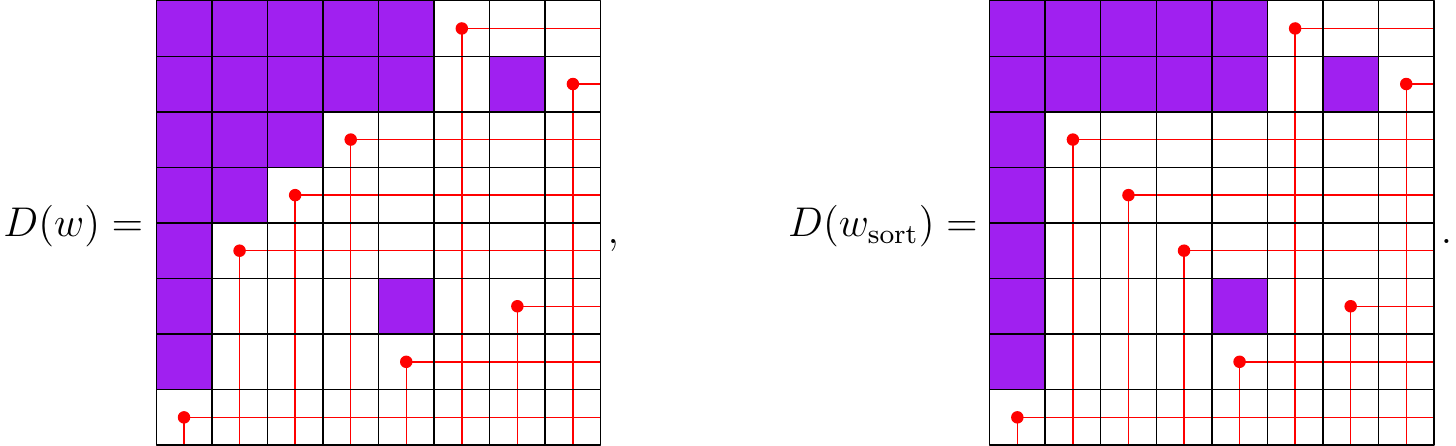}
		\end{center}
	\end{example}

%
	Note that any permutation $w_{\sort}$ is always sorted, and that the primary column data of $w_\sort$ is always the same as the primary column data of $w$. Observe also that $w_{\sort}$ is the identity permutation whenever $w$ is dominant.
	
	We now describe the relation between Grothendieck polynomials and the sort operation on permutations. We will write $(ab)$ for the transposition in $S_n$ swapping $a$ and $b$. We first recall a formula due to Lenart for the multiplication of a Grothendieck polynomial by a variable.
	
	Let $w\in S_n$ and $j\in[n]$. Denote by $(ab)$ the transposition in $S_n$ swapping the numbers $a$ and $b$. For $j\in[n]$, define the set $P_j(w)$ to consist of all permutations 
	\[v=w\cdot (a_1j)\cdots(a_pj)(jb_1)\cdots(jb_q)\in S_n \]
	such that $p,q\geq 0$, $p+q\geq 1$, 
	\[a_p<a_{p-1}<\cdots<j<b_q<b_{q-1}<\cdots<b_1, \]
	and the length increases by exactly 1 upon multiplication by each transposition. For $v\in P_j(w)$, define $\xi_j(w,v)=(-1)^{q+1}$.
	\begin{theorem}[{\cite[Theorem 3.1]{grothmonks}}]
		\label{thm:grothmonk}
		For any $w\in S_n$ and $j\in [n]$, 
		\[x_j\mathfrak{G}_w=\sum_{v\in P_j(w)}\xi_j(w,v)\mathfrak{G}_v. \]
	\end{theorem}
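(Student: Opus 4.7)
The plan is to prove this K-theoretic Monk formula by downward induction on $\ell(w)$, using the recursive definition of $\mathfrak{G}_w$ via $\overline{\partial}_k$. Work in a sufficiently large ambient $S_N$, chosen so every permutation arising in $P_j(w)$ sits inside $S_N$. The base case is $w = w_0$, where $\mathfrak{G}_{w_0}$ is the single monomial $x_1^{N-1}x_2^{N-2}\cdots x_{N-1}$. A direct computation identifies the monomial $x_j \mathfrak{G}_{w_0}$ with $\mathfrak{G}_v$ for a specific dominant $v$, and a short inspection of which chains $(a_1 j)\cdots(j b_q)$ keep the length strictly increasing from $w_0$ verifies that $P_j(w_0)$ contributes exactly this $v$ with sign $+1$.

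For the inductive step, choose an ascent $k$ of $w$, so $\mathfrak{G}_w = \overline{\partial}_k \mathfrak{G}_{ws_k}$ and the formula is known for $ws_k$ by hypothesis. The first ingredient is a Leibniz-type identity for $\overline{\partial}_k(x_j \cdot -)$. Expanding $\overline{\partial}_k(f) = \partial_k((1-x_{k+1})f)$ and using $\partial_k(fg) = \partial_k(f)g + (s_k f)\partial_k(g)$ yields
\[
\overline{\partial}_k(x_j f) \;=\; x_j\, \overline{\partial}_k(f) \qquad \text{when } j \notin \{k, k+1\},
\]
together with explicit correction terms when $j \in \{k,k+1\}$ that reduce to $\overline{\partial}_k(f)$ times a linear expression in $x_k, x_{k+1}$. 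Applied with $f = \mathfrak{G}_{ws_k}$ and combined with Proposition \ref{prop:repeatoperators}, this rewrites $x_j \mathfrak{G}_w$ in terms of $\overline{\partial}_k(x_j \mathfrak{G}_{ws_k})$. Substituting the inductive expansion $x_j \mathfrak{G}_{ws_k} = \sum_{v' \in P_j(ws_k)} \xi_j(ws_k, v')\, \mathfrak{G}_{v'}$ and applying $\overline{\partial}_k$ termwise produces a signed sum of $\mathfrak{G}_v$'s, since by Proposition \ref{prop:repeatoperators} we have $\overline{\partial}_k \mathfrak{G}_{v'} = \mathfrak{G}_{v'}$ when $v'(k) < v'(k+1)$ and $\overline{\partial}_k \mathfrak{G}_{v'} = \mathfrak{G}_{v' s_k}$ otherwise.

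The main obstacle is matching this signed sum with $\sum_{v \in P_j(w)} \xi_j(w, v)\, \mathfrak{G}_v$. This amounts to constructing a sign-preserving bijection, supplemented by an involution cancelling exceptional pairs, between (twisted) chains in $P_j(ws_k)$ and chains in $P_j(w)$. The matching must insert or delete $s_k$ at the correct position of each chain $(a_1 j)\cdots(a_p j)(j b_1)\cdots(j b_q)$, maintain the length-increase-by-one condition at every step, and track the parity $q$ so that the sign $\xi_j(w,v) = (-1)^{q+1}$ is preserved. The combinatorics is delicate because $s_k$ can interact with the chain in several ways: it may commute past transpositions not involving $k$ or $k+1$, generate a new transposition at the boundary, or absorb into an adjacent transposition in the chain. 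Isolating these cases reduces the claim to a finite case analysis. An alternative geometric route entirely bypasses this bookkeeping: identifying $\mathfrak{G}_w$ with $[\mathcal{O}_{X_w}] \in K^0(G/B)$ and $x_j$ with $1 - [\mathcal{L}_j]$, the formula becomes an instance of the K-theoretic Chevalley rule for multiplication by a line bundle class.
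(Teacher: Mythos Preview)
The paper does not prove this theorem; it is quoted from Lenart \cite{grothmonks} and invoked only as input to Lemma~\ref{lem:schubertunsort}. There is no proof in the paper to compare your argument against.

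Your proposal has a genuine gap at the base case. If you anchor a downward induction at $w_0\in S_N$, then $P_j(w_0)$ computed inside $S_N$ is empty, since no transposition can lengthen the longest element; the right-hand side is then $0$ while $x_j\mathfrak{G}_{w_0}\neq 0$. Your observation that $x_j\mathfrak{G}_{w_0}=\mathfrak{G}_v$ for a dominant $v$ is correct, but that $v$ necessarily lies in $S_{N+1}\setminus S_N$ (its Rothe diagram has $\binom{N}{2}+1$ boxes, one more than the staircase), so no chain confined to $S_N$ produces it. The identity is really a statement in $S_\infty$, where there is no longest element to start from; enlarging $N$ only relocates the obstruction to a higher $w_0$. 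Even setting this aside, the inductive step is only an outline: the sign-preserving matching between chains in $P_j(ws_k)$ and chains in $P_j(w)$, together with a cancelling involution on the extra terms that appear when $j\in\{k,k+1\}$, is the entire substance of such a proof, and you have not actually constructed it. Listing the ways $s_k$ can interact with a chain is not the same as exhibiting a well-defined, exhaustive bijection that preserves both the length-increase condition at every step and the parity of $q$.
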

	
	Denote by $\mathrm{des}(w)$ the \emph{descent set} of $w$, $\mathrm{des}(w)=\{j\mid w(j)>w(j+1) \}$.
	\begin{lemma}
		\label{lem:schubertunsort}
		Let $w\in S_n$ be a nonidentity permutation with primary column data $(h,C,\alpha,i_1,\beta)$. Set
		\[a=\max \left(\mathrm{des}(w)\cap [\alpha+1,i_1]\right) \quad \mbox{and} \quad b = \max\left(\{p\mid w(p)<w(a)\}\cap [\alpha+1,i_1] \right).\]
		Then $\mathfrak{G}_w=x_a\mathfrak{G}_{w\cdot(ab)}$.
	\end{lemma}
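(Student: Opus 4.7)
The plan is to apply Lenart's Monk-type formula (Theorem~\ref{thm:grothmonk}) to expand
\[
 x_a\mathfrak{G}_{w'} \;=\; \sum_{v\in P_a(w')}\xi_a(w',v)\,\mathfrak{G}_v,
\]
where $w':=w\cdot(ab)$, and to argue that the only chain contributing to the right-hand side is the one producing $w$, with sign $+1$. The target identity $\mathfrak{G}_w=x_a\mathfrak{G}_{w'}$ then follows once I show $P_a(w')=\{w\}$, realized by the choice $p=0$, $q=1$, $b_1=b$.

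Before expanding, I would collect some structural facts about $a$ and $b$. Because $i_1+1\in C\subseteq D(w)_{h+1}$ one has $w(i_1+1)\geq h+2$, while Lemma~\ref{lem:bij} gives $w(i_1)\leq h$; hence $i_1$ is not a descent of $w$, and $a\leq i_1-1$. Maximality of $a$ among descents in $[\alpha+1,i_1]$ forces $w$ to be strictly increasing on $[a+1,i_1]$, and together with the definition of $b$ this yields
\[
 w(a+1)<\cdots<w(b)<w(a)<w(b+1)<\cdots<w(i_1)
\]
(with the second chain empty when $b=i_1$). In particular, no $c$ with $a<c<b$ satisfies $w(b)<w(c)<w(a)$, so $w$ covers $w'$ in the Bruhat order, i.e.\ $\ell(w)=\ell(w')+1$. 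Taking $p=0,q=1,b_1=b$ therefore places $w\in P_a(w')$ with sign $(-1)^{q+1}=+1$.

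The heart of the argument is showing that $P_a(w')$ contains nothing else. I would split this into two claims. First, $p=0$: no initial transposition $(a_1 a)$ with $a_1<a$ is a Bruhat cover of $w'$, because $w'(a_1)>w'(a)=w(b)$ for every such $a_1$. When $a_1\in[1,\alpha]$ this is immediate from $[\alpha]\subseteq C$, which forces $w(a_1)\geq h+2>w(b)$. When $a_1\in[\alpha+1,a-1]$, I would invoke dominance (132-avoidance) of $\sigma$: if instead $w(a_1)<w(b)$, then $a_1<a<b$ in $[\alpha+1,i_1]$ together with $w(a_1)<w(b)<w(a)$ would be a 132-pattern of $\sigma$, a contradiction. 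Second, $q=1$ and $b_1=b$: among first steps $(a b_1)$ with $b_1>a$, the choice $b_1\in[a+1,b-1]$ fails because $w(b_1)<w(b)=w'(a)$; the choice $b_1\in[b+1,i_1]$ fails because $c=b$ is an intermediate satisfying $w'(a)<w'(b)=w(a)<w'(b_1)$; and $b_1>i_1$ fails because either $w(b_1)>w(a)$ again makes $c=b$ an intermediate, or $w(b)<w(b_1)<w(a)$, which is impossible since such $w(b_1)$ would lie in $[h-\beta+1,h]=w([\alpha+1,i_1])$, contradicting $b_1>i_1$ via Lemma~\ref{lem:bij}. Once $(ab)$ has produced $w$, any subsequent step $(a b_2)$ with $a<b_2<b$ is forbidden because monotonicity of $w$ on $[a+1,i_1]$ gives $w(a)>w(b_2)$, so the swap cannot increase length.

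I expect the most delicate point to be the 132-avoidance step for $a_1\in[\alpha+1,a-1]$: it is the only place where dominance of $\sigma$ --- the entire reason for introducing primary column data --- is invoked, and it is what ties the local choice of $a$ and $b$ to the global structure of the Rothe diagram.
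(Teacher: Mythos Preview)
Your argument is correct and carries out precisely the ``straightforward but lengthy case analysis'' via Theorem~\ref{thm:grothmonk} that the paper itself only sketches as its first proof (the paper also mentions an alternative route through pipe dreams and ladder moves, which you do not need). One small omission in your treatment of $b_1>i_1$: you should also dispose of the subcase $w(b_1)<w(b)=w'(a)$, which fails for the same trivial reason as $b_1\in[a+1,b-1]$---the swap does not increase length.
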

	\begin{proof}
		The lemma can be proved by a straightforward but lengthy case analysis, using Theorem \ref{thm:grothmonk} to show $P_{a}(w\cdot (ab))=\{w\}$. Alternatively, observe that the particular choice of $a$ and $b$ implies $D(\sigma(w\cdot (ab)))$ equals $D(\sigma(w))$ with the rightmost box in row $a$ (which will be the bottommost row) removed. The box removed in $D(w)$ is bottomost in its column and rightmost in the dominant part of $D(w)$. The reader familiar with pipe dreams may note that this lemma is now a trivial consequence of the simplicial complex perspective of \cite{multidegree}, together with the ladder moves of \cite{laddermoves}.
	\end{proof}

	\begin{proposition}
		\label{prop:grothunsort}
		Let $w\in S_n$ and suppose $\sigma(w)$ has Rothe diagram equal to the Young diagram of $\lambda=(\lambda_1,\ldots,\lambda_{\beta})$. Then
		\[\mathfrak{G}_w=x_{\alpha+1}^{\lambda_1}\cdots x_{i_1}^{\lambda_{\beta}}\mathfrak{G}_{w_{\sort}}. \]
	\end{proposition}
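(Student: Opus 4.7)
The plan is to induct on $|\lambda| = \lambda_1 + \cdots + \lambda_\beta$, peeling off one corner of the Young diagram of $\sigma(w)$ at a time via Lemma \ref{lem:schubertunsort}. The base case is $|\lambda| = 0$, where $\sigma(w)$ is the identity of $S_\beta$ and hence $w = w_{\sort}$; both sides of the claimed formula then reduce to $\mathfrak{G}_w$.

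For the inductive step, let $r$ be the largest index with $\lambda_r > 0$. Since $\sigma(w)$ is dominant of shape $\lambda$, its maximum descent position is $r$, so the quantity $a$ of Lemma \ref{lem:schubertunsort} equals $\alpha + r$. The lemma gives $\mathfrak{G}_w = x_{\alpha+r}\,\mathfrak{G}_{w \cdot (ab)}$, and the argument outlined in its proof identifies $D(\sigma(w \cdot (ab)))$ with $D(\sigma(w))$ minus the corner in row $r$, column $\lambda_r$. Thus $\sigma(w \cdot (ab))$ is dominant of shape $\bar{\lambda}$, where $\bar{\lambda}_r = \lambda_r - 1$ and $\bar{\lambda}_i = \lambda_i$ for $i \neq r$.

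The crucial verification is that $w \cdot (ab)$ has the same primary column data $(h, C, \alpha, i_1, \beta)$ as $w$, which makes the inductive hypothesis directly applicable. Because $a, b \in [\alpha + 1, i_1]$ and $w(a), w(b) \in [h - \beta + 1, h]$ by Lemma \ref{lem:bij}, the swap $(ab)$ fixes every position and value outside the rectangle $[\alpha+1, i_1] \times [h-\beta+1, h]$. In particular, columns $D(w)_j$ for $j \leq h - \beta$ are unchanged; for $j \in [h-\beta+1, h]$, the corresponding columns of $D(w \cdot (ab))$ remain standard intervals containing $[\alpha]$ (they arise from columns of the Young diagram $D(\sigma(w \cdot (ab)))$ after prepending $[\alpha]$, using the identification in the proof of Lemma \ref{lem:bij}); and column $h+1$ remains exactly $C$ because $C \cap [\alpha+1, i_1] = \emptyset$, which is forced by the definitions of $\alpha$ and $i_1$.

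Applying the inductive hypothesis to $w \cdot (ab)$ then gives $\mathfrak{G}_{w \cdot (ab)} = x_{\alpha+1}^{\bar{\lambda}_1} \cdots x_{i_1}^{\bar{\lambda}_\beta} \mathfrak{G}_{(w \cdot (ab))_{\sort}}$. Since the sort operation depends only on the multiset $\{w(\alpha+1), \ldots, w(i_1)\}$, which is preserved by $(ab)$, we have $(w \cdot (ab))_{\sort} = w_{\sort}$. Multiplying through by $x_{\alpha+r}$ reconstitutes the factor $x_{\alpha+1}^{\lambda_1} \cdots x_{i_1}^{\lambda_\beta}$ and completes the induction. The main obstacle is the primary-column-data invariance established in the previous paragraph; the other steps are routine once the structural picture supplied by Lemma \ref{lem:bij} and the recorded consequences of Lemma \ref{lem:schubertunsort} are in hand.
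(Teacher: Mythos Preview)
Your proof is correct and follows essentially the same approach as the paper: both induct on the number of inversions of $\sigma(w)$ (equivalently $|\lambda|$), peeling off one corner box at each step via Lemma~\ref{lem:schubertunsort}. The paper compresses this into a single sentence, while you have spelled out the verification that the primary column data and the sorted permutation are preserved under the swap $(ab)$.
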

	\begin{proof}
		It is enough to work inductively and use that the choice of $a$ and $b$ in Lemma \ref{lem:schubertunsort} implies $\sigma(w\cdot (ab))$ equals $\sigma(w)\cdot(a-\alpha\,\,b-\alpha)$, which has one fewer inversion than $\sigma(w)$.
	\end{proof}
	
	From Proposition \ref{prop:grothunsort}, one can immediately deduce the following well-known property of dominant permutations.
	\begin{corollary}
		If $w\in S_n$ is dominant with $D(w)$ equal to the Young diagram of $\lambda$, then
		\[\mathfrak{G}_w=x_1^{\lambda_1}\cdots x_{n-1}^{\lambda_{n-1}}. \]
	\end{corollary}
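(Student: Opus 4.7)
The plan is to apply Proposition \ref{prop:grothunsort} directly; the corollary is essentially an immediate specialization. First, I would unpack the dominant case of the primary column data: by the explicit stipulation in the definition, when $w$ is dominant one has $\alpha=0$, $i_1=n$, and $\beta=n$, so $\sigma(w)$ is the restriction of $w$ to all of $[1,n]$, i.e. $w$ itself regarded as an element of $S_\beta=S_n$. Since $w$ is dominant, $D(\sigma(w))=D(w)$ is the Young diagram of $\lambda=(\lambda_1,\ldots,\lambda_n)$.

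Next, I would observe (as the text remarks) that $w_{\sort}$ sorts the values $w(1),\ldots,w(n)$ in increasing order, which yields the identity permutation. Thus $\mathfrak{G}_{w_\sort}=\mathfrak{G}_{\mathrm{id}}=1$; the latter follows from the recursive definition of $\mathfrak{G}$ by iterated application of $\overline{\partial}_j$ to $\mathfrak{G}_{w_0}=x_1^{n-1}\cdots x_{n-1}$, using that each $\overline{\partial}_j$ reduces the number of inversions by one on each step connecting $w_0$ to $\mathrm{id}$.

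Now Proposition \ref{prop:grothunsort} applied to this situation gives
\[
\mathfrak{G}_w = x_{1}^{\lambda_1}\cdots x_{n}^{\lambda_{n}}\,\mathfrak{G}_{w_\sort}=x_1^{\lambda_1}\cdots x_n^{\lambda_n}.
\]
Finally I would note that $\lambda_n=0$, because for any $w\in S_n$ row $n$ of $D(w)=\{(i,j):i<w^{-1}(j),\,j<w(i)\}$ is empty (the condition $n<w^{-1}(j)$ is never satisfied). Hence the factor $x_n^{\lambda_n}$ disappears, and we obtain $\mathfrak{G}_w=x_1^{\lambda_1}\cdots x_{n-1}^{\lambda_{n-1}}$, as claimed.

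There is no real obstacle here; the entire content of the proof is in Proposition \ref{prop:grothunsort}. The only thing to be careful about is matching the index conventions so that $\beta=n$, and confirming that $\mathfrak{G}_{\mathrm{id}}=1$ and $\lambda_n=0$, both of which are standard.
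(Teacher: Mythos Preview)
Your proof is correct and is exactly the argument the paper has in mind: the corollary is stated as an immediate consequence of Proposition~\ref{prop:grothunsort}, and you have simply spelled out the specialization $\alpha=0$, $i_1=n$, $\beta=n$, $\sigma(w)=w$, $w_{\sort}=\mathrm{id}$ (both of which the paper already remarks on), together with $\mathfrak{G}_{\mathrm{id}}=1$ and $\lambda_n=0$.
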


	\section{Orthodontia And Weak Bruhat Order}
	\label{sec:orthobruhat}
	In this section, we track how orthodontic sequences of permutations change with the application of certain adjacent transpositions and the sort operation. The following technical theorem deals with the case of sorted permutations, from which we move up in weak Bruhat order. Proposition \ref{prop:unsortedorthodontia} handles the unsorted case, in which we move down in weak Bruhat order. We offer an example first to help illustrate the sorted case and its proof.
	\begin{example}
		Consider $w = 68432751$ with $w_\sort = 68234751$. Recall the primary column data of $w_\sort$ is $h=4$, $C=\{1,2,6\}$, $\alpha=2$, $i_1=5$, and $\beta=3$. The diagrams of $w_{\sort}$, $w_{\sort}s_5$, $w_{\sort}s_5s_4$, and $w_{\sort}s_5s_4s_3$ are shown in Figure \ref{fig:inversesort}. The orthodontic sequence of $w_\sort$ is
		\[\bm{i}(w_\sort) = (5,4,3,1),\quad \bm{k}(w_\sort)=(0,3,0,0,0,0,1),\quad \bm{m}(w_\sort)=(0,0,1,1), \]
		and the orthodontic sequence of $w_\sort s_5s_4s_3$ is
		\[\bm{i}(w_\sort s_5s_4s_3) = (1),\quad \bm{k}(w_\sort s_5s_4s_3)=(0,0,4,0,0,0,1),\quad \bm{m}(w_\sort s_5s_4s_3)=(1). \]
	\end{example}

	\begin{figure}[ht]
		\begin{center}
			\includegraphics[scale=.85]{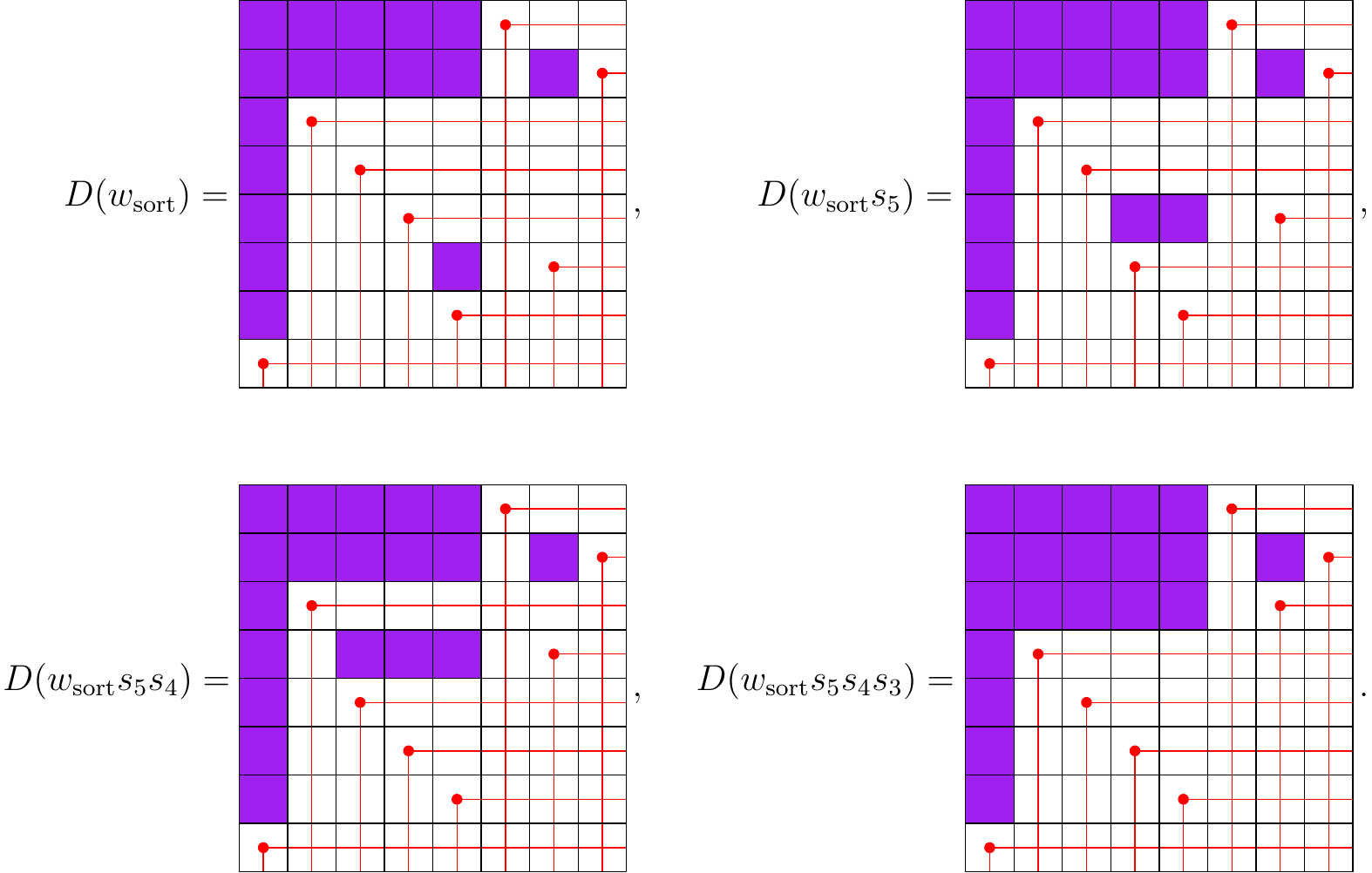}
		\end{center}
		\caption{The diagrams of $w_{\sort}$, $w_{\sort}s_5$, $w_{\sort}s_5s_4$, and $w_{\sort}s_5s_4s_3$ when $w=68432751$.}
		\label{fig:inversesort}
	\end{figure}

	\begin{theorem}
		\label{thm:sortedorthodontia}
		Let $w\in S_n$ be a nonidentity sorted permutation, and suppose $w$ has orthodontic sequence
		\[\bm{i}(w) = (i_1,\ldots, i_\l),\quad \bm{k}(w)=(k_1, \ldots, k_n), \mbox{ and}\quad \bm{m}(w) = (m_1, \ldots, m_\l).\] 
		Let $(h,C,\alpha,i_1,\beta)$ be the primary column data of $w$. Then:
		\begin{enumerate}[label=\textup{(\roman*)}]
			\item For $j \in [\beta]$, we have $i_j = i_1 - j + 1$.
			
			\item If $\alpha>0$, then $k_\alpha \geq \beta$.
			
			\item For $j \in [\alpha + 1, i_1]$, we have $k_j = 0$. 
			
			\item For $j \in [\beta - 1]$, we have $m_j = 0$.
			
			\item The permutation $ws_{i_1}\cdots s_{\alpha + 1}$ has orthodontic sequence
			\[\bm{i}(ws_{i_1}\cdots s_{\alpha+1}) = (i_{\beta + 1}, \ldots, i_\l),\quad 
			\bm{m}(ws_{i_1}\cdots s_{\alpha+1})=(m_{\beta + 1}, \ldots, m_\l),				
			\]
			and
			\begin{align*}
			\bm{k}(ws_{i_1}\cdots s_{\alpha+1}) = \begin{cases}
			(k_1, \ldots,k_{\alpha-1}, k_\alpha - \beta, \beta + m_\beta, k_{\alpha+2},\ldots, k_n) &\mbox{ if $\alpha>0$},\\
			(\beta + m_\beta, k_{2},\ldots, k_n) &\mbox{ if $\alpha=0$}.\\
			\end{cases}	
			\end{align*}
		\end{enumerate}
	\end{theorem}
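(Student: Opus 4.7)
The plan is to directly simulate the first $\beta$ iterations of the orthodontic algorithm on $D(w)$ using the explicit structure of $w$ forced by sortedness, and then to compare the resulting diagram to $D(w s_{i_1}\cdots s_{\alpha+1})$ column by column.

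I would begin by recording the consequences of sortedness. Lemma \ref{lem:bij} together with $\sigma(w)=\mathrm{id}$ forces $w(\alpha+k)=h-\beta+k$ for $k\in[\beta]$, while the definition of the primary column data gives $w([\alpha])\subseteq[h+2,n]$ and $w(i_1+1)\geq h+2$. A direct computation then shows that for each $p=h-\beta+k$ with $k\in[\beta]$, the $p$th column of $D(w)$ is exactly $[\alpha]$, yielding (ii). For (iii), a column equal to $[j]$ in $D(w)$ requires $w(j+1)$ to be the minimum of $w(1),\ldots,w(j+1)$; since $w(\alpha+1)<w(\alpha+2)<\cdots<w(i_1)$ are strictly increasing and $w(i_1+1)\geq h+2>h=w(i_1)$, no $j\in[\alpha+1,i_1]$ qualifies. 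For (i) and the ``active column'' half of (iv), I would induct on $j\in[\beta]$: after $j$ swaps the first nonempty column of the current diagram equals $[\alpha]\cup\{i_1-j+1\}\cup(C\cap[i_1+2,n])$, whose smallest missing tooth is $i_1-j$ when $j<\beta$. This yields $i_{j+1}=i_1-j$ and shows that the active column is never a standard interval during the first $\beta-1$ iterations. The remaining case for (iv) --- that no \emph{other} column of the current diagram equals $[i_j]$ at the $j$th stripping phase --- reduces (by analyzing the effect of a row swap on a column) to ruling out a column $C_p=[i_j-1]\cup\{i_1+1\}$ in $D(w)$; this is impossible for $j\in[\beta-1]$ because such a column would simultaneously require $p<w(i_1-j)=h-j$ (to include row $i_1-j$) and $p\geq w(i_1-j+1)=h-j+1$ (to exclude row $i_1-j+1$).

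For (v), I would compute $w':=w s_{i_1}\cdots s_{\alpha+1}$ explicitly: it fixes positions outside $[\alpha+1,i_1+1]$, places $w(i_1+1)$ at position $\alpha+1$, and shifts $w(\alpha+1),\ldots,w(i_1)$ one position to the right. A column-by-column verification then shows that $D(w')$ agrees with the diagram $D^{(\beta)}(w)$ obtained from $D(w)$ by the initial $\bm{k}$-strip and the $\beta$ row swaps, everywhere except at positions $p\in[h-\beta+1,h]$, where $D(w')$ has columns equal to $[\alpha+1]$ while $D^{(\beta)}(w)$ has empty columns. This produces $\beta$ new copies of $[\alpha+1]$ in $D(w')$; combined with the $m_\beta$ copies of $[\alpha+1]$ already present in $D^{(\beta)}(w)$ (which arise from columns of $D(w)$ originally equal to $[\alpha]\cup\{i_1+1\}$ exactly), this gives $\bm{k}(w')_{\alpha+1}=\beta+m_\beta$. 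Stripping all $[j]$'s from $D(w')$ then recovers exactly the state of orthodontia on $D(w)$ after step $\beta$ and its subsequent $[\alpha+1]$-strip, so $\bm{i}(w')=(i_{\beta+1},\ldots,i_\ell)$, $\bm{m}(w')=(m_{\beta+1},\ldots,m_\ell)$, and the remaining entries of $\bm{k}(w')$ agree with those of $\bm{k}(w)$. The main obstacle throughout is the bookkeeping in this last step: verifying that the value-shift $w\to w'$ matches the row-shift induced by the $\beta$ consecutive swaps, and precisely matching the $m_\beta$ contribution to $\bm{k}(w')_{\alpha+1}$.
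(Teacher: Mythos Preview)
Your proposal is correct and follows essentially the same strategy as the paper: both arguments exploit the explicit values $w(\alpha+k)=h-\beta+k$ forced by sortedness to analyze the columns of $D(w)$ and $D(ws_{i_1}\cdots s_{\alpha+1})$ directly, then track the first $\beta$ orthodontia steps. The only notable difference is in part (iv): where you chase columns to rule out the shape $[i_j-1]\cup\{i_1+1\}$ appearing in $D(w)$, the paper observes more simply that rows $\alpha+1,\ldots,i_1$ of $D(w)_-$ are entirely empty (any box in those rows of $D(w)$ lies in a column of index at most $h$, hence a standard interval that gets stripped), so after permuting rows $\alpha+1,\ldots,i_1+1$ no column can become $[i_j]$ for $j\in[\beta-1]$. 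This shortcut avoids your case split between the active column and the others, but the two arguments are otherwise parallel.
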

	\begin{proof}
		By definition, $C$ contains $[\alpha] \cup\{i_1 + 1\}$ and does not contain any of $\alpha + 1, \ldots, i_1$. It follows that the orthodontic sequence begins $(i_1, i_1 - 1, \ldots, \alpha+1)$. This proves $\textup{(i)}$. To prove $\textup{(ii)}$, observe that since $w$ is sorted, the $\beta$ columns immediately left of $C$ are all equal to $[\alpha]$. Thus if $\alpha>0$, then $k_\alpha \geq \beta$. 
		
		For $\textup{(iii)}$, suppose there is a column $C'$ equal to $[j]$ for $j\in[\alpha+1,i_1]$. Let $C'$ be column $p$ of $D(w)$. Since $[\alpha]\subseteq C$ but $\alpha+1\notin C$, it follows that $p\leq h$. Since $w$ is sorted, the $\beta$ columns left of $C$ all equal $\alpha$. Thus $p\leq h-\beta$. Consider $w(j+1)$. Since $j+1\notin C'$, $w(j+1)\leq p\leq h-\beta$. As $w([\alpha+1,i_1])=[h-\beta+1,h]$ and $j+1\in[\alpha+2,i_1+1]$, it follows that $j+1=i_i+1$. But $w(i_1+1)\leq h-\beta$ contradicts that $i_1+1\in C$, so there can be no such $C'$. 
		
		For $\textup{(iv)}$, consider the diagram $D(w)_-$ obtained by removing any standard intervals from $D(w)$. The first $\beta - 1$ steps of the orthodontia algorithm amount to permuting rows $(\alpha + 1, \ldots, i_1 + 1)$ of $D(w)_-$ to $(\alpha + 1, i_1 + 1, \alpha + 2, \ldots, i_1)$. Since $w$ is sorted, the $\beta$ many rows $\alpha +1, \ldots, i_1$ of $D(w)_-$ are all empty. Thus, $m_j = 0$ for $j\in[\beta-1]$. 
		
		Lastly, we prove $\textup{(v)}$. Consider the columns of $D(w)$. Columns $1,2\ldots,h-\beta$ are standard intervals that strictly contain $[\alpha]$. Since $w$ is sorted, columns $h-\beta+1,\ldots,\beta$ are each exactly $[\alpha]$. Let $E$ denote the diagram whose columns are the columns of $D(w)$ weakly to the right of $C$, with the same indices. Note that $E$ may contain standard intervals $[j]$ with $j\leq \alpha$, but $E$ has no boxes in rows $\alpha+1,\ldots,i_1$. 
		
		Let $\bm{k}(ws_{i_1}\cdots s_{\alpha+1}) = (k_1',\ldots,k_n')$. We analyze the columns of $D(ws_{i_1}\cdots s_{\alpha+1})$. Columns $1,2,\ldots,h-\beta$ of $D(ws_{i_1}\cdots s_{\alpha+1})$ agree with those of $D(w)$. Columns $h-\beta+1,\ldots,h$ are each $[\alpha+1]$. The remaining columns are exactly $s_{\alpha+1}\cdots s_{i_1}\cdot E$. 
		
		The only columns of $D(w)$ that can be of the form $[j]$ with $j\geq [\alpha+2]$ are columns $1,2,\ldots,h-\beta$. No new such columns are created by the action of $s_{\alpha+1}\cdots s_{i_1}$, so $k_j'=k_j$ for $j\geq\alpha+1$. Any standard intervals occurring in $E$ are weakly contained in $[\alpha]$, and so are unaffected by the action of $s_{\alpha+1}\cdots s_{i_1}$. Thus, $k_j'=k_j$ for $j\in [\alpha-1]$. 
		
		The $\beta$ columns $[\alpha]$ of $D(w)$ become $\beta$ copies of $[\alpha+1]$ in $D(ws_{i_1}\cdots s_{\alpha+1})$, but no other columns $[\alpha]$ are changed. Thus (if $\alpha>0$) $k_{\alpha}' = k_{\alpha}-\beta$. From $(\textup{iii})$, no columns $[\alpha+1]$ can occur in $D(w)$. However, $\beta$ more columns $[\alpha+1]$ appear in $D(ws_{i_1}\cdots s_{\alpha+1})$ from the $\beta$ columns $[\alpha]$ left of $C$ in $D(w)$. The $m_\beta$ columns in $E$ that were standardized to $[\alpha+1]$ by orthodontia all equal $[\alpha+1]$ as well in $D(ws_{i_1}\cdots s_{\alpha+1})$. Thus, we have $k_{\alpha+1}' = \beta+m_\beta$.
		
		The proof of $\textup{(v)}$ is completed by noting that columns $h+1,\ldots, n$ of $D(ws_{i_1}\cdots s_{\alpha+1})$ equal $s_{\alpha+1}\cdots s_{i_1}\cdot (E)$. This implies that $D(ws_{i_1}\cdots s_{\alpha+1})_-$ occurs in the execution of the orthodontia algorithm on $D(w)$ after $\beta$ steps. Hence 
		\[\bm{i}(ws_{i_1}\cdots s_{\alpha+1}) = (i_{\beta + 1}, \ldots, i_\l),\mbox{ and } 
		\bm{m}(ws_{i_1}\cdots s_{\alpha+1})=(m_{\beta + 1}, \ldots, m_\l).	\qedhere		
		\]
	\end{proof}
	
	Note that when $\alpha=0$, the analysis of $\bm{k}$ in the proof of part $\textup{(v)}$ above reduces to 
	\[
	\bm{k}(ws_{i_1}\cdots s_{\alpha+1}) = (\beta + m_\beta, k_{\alpha+2},\ldots, k_n).
	\]
	The following example illustrates this case.

	\begin{example}
		Consider the sorted permutation $w=12845376$ with primary column data $h=2$, $C=\{3,4,5\}$, $\alpha=0$, $i_1=2$, and $\beta=2$. The diagrams of $w$ and $ws_2s_1$ are
		\begin{center}
			\includegraphics[scale=.85]{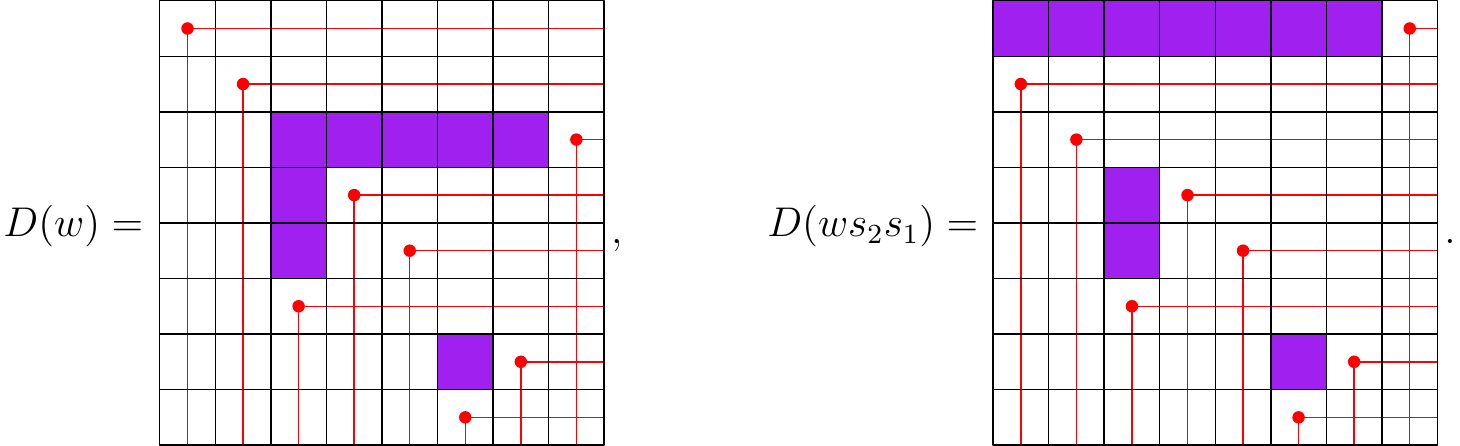}
		\end{center}
	The orthodontic sequence of $w$ is
	\[\bm{i}(w) = (2,1,3,2,4,3,6,5,4,3,2),\quad \bm{k}(w)=(0,0,0,0,0,0,0),\quad \bm{m}(w)=(0,3,0,0,0,1,0,0,0,0,1), \]
	and the orthodontic sequence of $ws_2s_1$ is
	\[\bm{i}(w s_2s_1) = (3,2,4,3,6,5,4,3,2),\quad \bm{k}(w s_5s_4s_3)=(5,0,0,0,0,0,0),\quad \bm{m}(w s_5s_4s_3)=(0,0,0,1,0,0,0,0,1). \]
	\end{example}
	
	We now connect the orthodontic sequence of any permutation $w$ to that of $w_\sort$. 
	
	\begin{proposition}
		\label{prop:unsortedorthodontia}
		Fix $w\in S_n$, and let $w$ have primary column data $(h,C,\alpha,i_1,\beta)$. Let $\sigma=\sigma(w)\in S_\beta$.
 		Suppose $w_\sort$ has orthodontic sequence 
		\[\bm{i}(w_\sort) = (i_1,\ldots, i_\l),\quad \bm{k}(w_\sort)=(k_1, \ldots, k_n), \mbox{ and}\quad \bm{m}(w_\sort) = (m_1, \ldots, m_\l).\] 
		Then $w$ has orthodontic sequence 
		\[\bm{i}(w) = \bm{i}(w_\sort), \quad \bm{m}(w)=\bm{m}(w_\sort), \mbox{ and} \quad \bm{k}(w) = (k_1',k_2',\ldots,k_n'), \]
		where
		\[k_j'=\begin{cases}
		k_j&\mbox{ if } j\leq \alpha-1,\\
		k_j-\bm{k}(\sigma)_1-\cdots-\bm{k}(\sigma)_{\beta}& \mbox{ if } j=\alpha,\\
		k_j+\bm{k}(\sigma)_{j-\alpha}&\mbox{ if } j\in[\alpha+1,i_1],\\
		k_j &\mbox{ if } j\geq i_1+1.
		\end{cases} \]		
	\end{proposition}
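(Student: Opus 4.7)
The strategy is to compare the Rothe diagrams $D(w)$ and $D(w_\sort)$ directly and show that they differ only in a controlled rectangle, in a way governed entirely by $D(\sigma)$. Since $w_\sort$ is obtained from $w$ by permuting values only in positions $[\alpha+1, i_1]$ (which by Lemma \ref{lem:bij} are precisely the positions mapped to $[h-\beta+1, h]$), the functions $w$ and $w_\sort$ agree outside input $[\alpha+1, i_1]$, while $w^{-1}$ and $w_\sort^{-1}$ agree outside input $[h-\beta+1, h]$. Using the defining condition that $(i,j) \in D(v)$ if and only if $i < v^{-1}(j)$ and $j < v(i)$, a short case analysis on $(i,j)$ outside the rectangle $R := [\alpha+1, i_1] \times [h-\beta+1, h]$ shows that $D(w)$ and $D(w_\sort)$ agree there.

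Inside $R$, I would observe that the restriction of $D(w)$ is a shifted copy of $D(\sigma)$ (directly from the definition of $\sigma$), while the restriction of $D(w_\sort)$ is empty (a short calculation using $w_\sort(\alpha+t) = h-\beta+t$). Since $w(i), w_\sort(i) \geq h+2$ for $i \leq \alpha$, the rows $[\alpha]$ in columns $[h-\beta+1, h]$ are completely filled in both diagrams. Consequently, denoting by $\lambda'_t$ the length of column $t$ of $D(\sigma)$, column $h-\beta+t$ of $D(w)$ equals the standard interval $[\alpha+\lambda'_t]$, while column $h-\beta+t$ of $D(w_\sort)$ equals $[\alpha]$.

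The crucial consequence is that every column of either diagram in the range $[h-\beta+1, h]$ is a standard interval. Hence the first step of the orthodontia algorithm simultaneously strips all of these columns from both $D(w)$ and $D(w_\sort)$, after which $D(w)_- = D(w_\sort)_-$. The remainder of the algorithm then runs identically on the two diagrams, giving $\bm{i}(w) = \bm{i}(w_\sort)$ and $\bm{m}(w) = \bm{m}(w_\sort)$ immediately. The formula for $\bm{k}(w)$ follows from bookkeeping: columns of the form $[j]$ with $j \notin \{\alpha, \alpha+1, \ldots, i_1\}$ occur only outside $[h-\beta+1, h]$ (where the two diagrams agree), so $k'_j = k_j$; for $j = \alpha$, the rectangle contributes $\beta$ copies of $[\alpha]$ to $D(w_\sort)$ and $\beta - \sum_{s \geq 1} \bm{k}(\sigma)_s$ copies to $D(w)$; and for $j \in [\alpha+1, i_1]$, the rectangle contributes $0$ to $D(w_\sort)$ and $\bm{k}(\sigma)_{j-\alpha}$ to $D(w)$, yielding $k'_j = k_j + \bm{k}(\sigma)_{j-\alpha}$ in this range.

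I expect the main obstacle to be the initial case analysis establishing that $D(w) = D(w_\sort)$ outside $R$, which requires carefully handling every configuration of $(i,j)$ relative to $[\alpha+1, i_1]$ and $[h-\beta+1, h]$. Once this claim is pinned down, the identification of columns in $R$ via $D(\sigma)$ and the counting step for $\bm{k}$ are straightforward.
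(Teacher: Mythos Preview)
Your proposal is correct and follows precisely the approach the paper sketches: the paper's proof consists of the single observation that $D(w_\sort)$ is obtained from $D(w)$ by removing the shifted copy of $D(\sigma)$ from the rectangle $[\alpha+1,i_1]\times[h-\beta+1,h]$, with the affected columns being standard intervals before and after, and you have supplied exactly the details needed to justify this. One small point to tighten: when you argue that rows $[\alpha]$ in columns $[h-\beta+1,h]$ are filled, you invoke only $w(i)\geq h+2$, but you also need $i<w^{-1}(j)$, which follows immediately from Lemma~\ref{lem:bij} since $w^{-1}(j)\in[\alpha+1,i_1]$.
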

	\begin{proof}
		This result follows easily from the fact that $D(w_\sort)$ is obtained from $D(w)$ by removing $D(\sigma)$ from the square $[\alpha+1,i_1]\times [h-\beta+1,h]$ inside $D(w)$. The only columns of $D(w)$ affected by this are standard intervals, and they stay standard intervals after the removal.
	\end{proof}

	\section{An Orthodontia Formula for Grothendieck Polynomials}
	\label{sec:grothortho}
	In this section we extend the orthodontic operator formula from Schubert polynomials to Grothendieck polynomials by replacing Demazure operators $\pi_j$ by Demazure--Lascoux operators $\overline{\pi}_j$. We first construct a partial order on $S_n$, which we will induct over to prove the extension. 
	\begin{definition}
		For $w\in S_n$, define $\fb(w)$ to be the set of \emph{fallen boxes} of $w$, the pairs $(i,j)\in D(w)$ that are not top-aligned in $D(w)$.
	\end{definition}
	
	\begin{example}
		\label{exp:132patterns}
		The sorted permutation $w=58134726$ has diagram 
		\begin{center}
			\includegraphics[scale=.85]{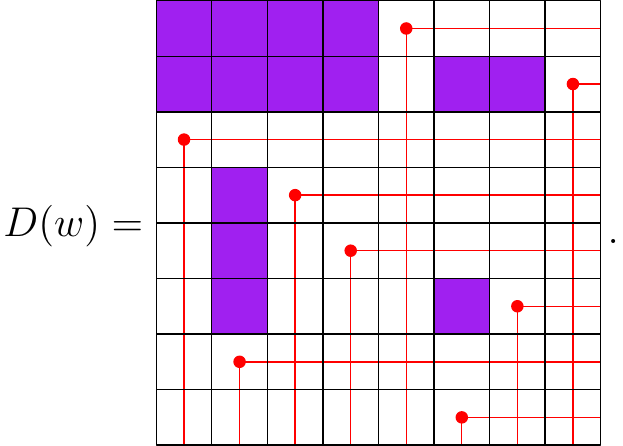}
		\end{center}
		Thus, $\fb(w)=\{(2,6),(2,7),(4,2),(5,2),(6,2),(6,6) \}$.
	\end{example}
	
	\begin{lemma}
		\label{lem:fb-less-equal}
		For any permutation $w\in S_n$, 
		\[\fb(w)=\fb(w_\sort). \]
		If $w$ is nonidentity and sorted with primary column data $(h,C,\alpha,i_1,\beta)$, then 
		\[\#\fb(ws_{i_1}\cdots s_{\alpha+1})<\#\fb(w).\]
	\end{lemma}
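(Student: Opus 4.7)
The plan is to exploit the explicit descriptions of $D(w_\sort)$ and $D(v)$, where $v = ws_{i_1}\cdots s_{\alpha+1}$, obtained respectively in Proposition~\ref{prop:unsortedorthodontia} and the proof of Theorem~\ref{thm:sortedorthodontia}(v). Writing $\mathrm{top}(C)$ for $\max\{k\geq 0 : [k]\subseteq C\}$ and noting that the top-aligned boxes of $C$ are exactly $[\mathrm{top}(C)]$, I will use the bookkeeping identity
\[\#\fb(u) = |D(u)| - \sum_{j=1}^n \mathrm{top}(C_j),\]
so any change in $\#\fb$ equals the change in $|D|$ minus the change in $\sum_j \mathrm{top}(C_j)$.

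For the equality $\fb(w) = \fb(w_\sort)$, recall that $D(w_\sort)$ is obtained from $D(w)$ by deleting the boxes of $D(\sigma(w))$ from the rectangle $[\alpha+1,i_1]\times[h-\beta+1,h]$. The only affected columns are $h-\beta+1,\ldots,h$, and these are standard intervals in both diagrams (in $D(w)$ by minimality of $h$, and in $D(w_\sort)$ by sortedness), so their fallen box sets are empty in both. All other columns are unchanged, hence $\fb(w)=\fb(w_\sort)$.

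For the strict inequality, I will compute both differences column-by-column using the description of $D(v)$ from the proof of Theorem~\ref{thm:sortedorthodontia}(v). From the explicit form of $v$ one reads $|D(v)|-|D(w)| = \ell(v)-\ell(w) = \beta$. For $\sum_j \mathrm{top}(C_j)$: columns $1,\ldots,h-\beta$ are unchanged; each of columns $h-\beta+1,\ldots,h$ grows from $[\alpha]$ to $[\alpha+1]$, contributing $+\beta$ in total; column $h+1 = C$ sees the fallen box at $(i_1+1,h+1)$ promoted to the top-aligned position $(\alpha+1,h+1)$ by the row swap, enlarging its top from $[\alpha]$ to $[\alpha+1]$ and contributing $+1$. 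For columns $j \geq h+2$, I invoke Theorem~\ref{thm:sortedorthodontia}(iii) to rule out standard intervals $[k]$ with $k\in[\alpha+1,i_1]$ in $D(w)$, together with sortedness (which forces $C_j \cap [\alpha+1,i_1]=\emptyset$ in any non-standard-interval column), and verify that the row permutation $s_{\alpha+1}\cdots s_{i_1}$ acts on $C_j$ either by fixing its top-aligned portion or by enlarging it by exactly one, the latter occurring precisely when $[\alpha]\cup\{i_1+1\}\subseteq C_j$. Letting $N\geq 0$ count these latter columns,
\[\#\fb(v)-\#\fb(w) = \beta - (\beta+1+N) = -1-N < 0.\]

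The main obstacle is organizing the case analysis for columns $j\geq h+2$, but sortedness collapses it substantially: no box can appear in rows $[\alpha+1,i_1]$ of a non-standard-interval column, so the row permutation $s_{\alpha+1}\cdots s_{i_1}$ only acts by moving the box at row $i_1+1$ (if present) down to row $\alpha+1$, leaving all other boxes fixed.
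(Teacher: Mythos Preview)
Your proof is correct and amounts to a careful fleshing-out of the paper's very terse argument, which simply cites Theorem~\ref{thm:sortedorthodontia}(i),(iv),(v). Your bookkeeping identity $\#\fb(u)=|D(u)|-\sum_j\mathrm{top}(C_j)$ together with the column-by-column comparison is precisely what is needed to unpack that citation, since the proof of part~(v) furnishes the column description of $D(ws_{i_1}\cdots s_{\alpha+1})$ that you use; one cosmetic note is that your column $h+1$ fits the same case analysis as columns $j\geq h+2$ (it satisfies $[\alpha]\cup\{i_1+1\}\subseteq C$), so treating it separately is not strictly necessary.
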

	\begin{proof}
		The claim $\fb(w)=\fb(w_\sort)$ follows from fact that $D(w_\sort)$ is obtained from $D(w)$ by removing any boxes lying in $[\alpha+1,i_1]\times [h-\beta+1,h]$. The boxes removed constitute a bottommost and rightmost subset of the dominant part of $w$.
		
		For $w$ nonidentity and sorted, the assertion $\#\fb(ws_{i_1}\cdots s_{\alpha+1})<\#\fb(w)$ follows from Theorem \ref{thm:sortedorthodontia} parts $\textup{(i)}$, $\textup{(iv)}$, and $\textup{(v)}$.		
	\end{proof}
	
	\begin{example}
		In Example \ref{exp:132patterns}, we saw the sorted permutation $w=23854716$ had $\#\fb(w)= 6$. The primary column data of $w$ is $h=1$, $C=\{1,2,4,5,6\}$, $\alpha=2$, $i_1=3$, and $\beta=1$, so $ws_{i_1}\cdots s_{\alpha+1}=ws_3=58314726$. Then
		\begin{center}
			\includegraphics[scale=.85]{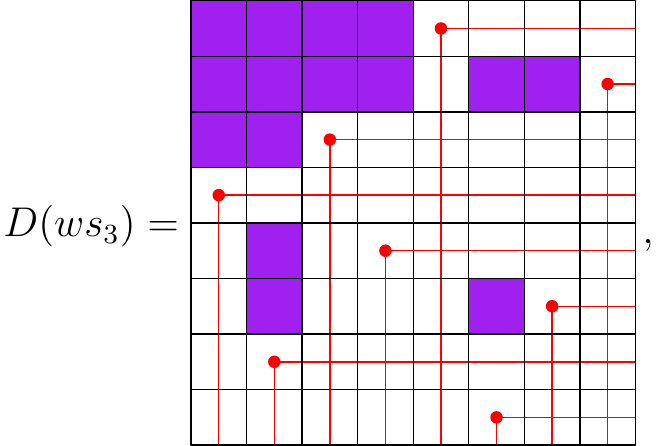}
		\end{center}
		so $\#\fb(ws_{3}) = \#\{(2,6), (2,7), (5,2), (6,2), (6,6)\} = 5$.
	\end{example}
	
	\begin{definition}
		Suppose $w\in S_n$ has primary column data $(h,C,\alpha,i_1,\beta)$. Define the \emph{orthodontic sort order} $\leq_{\mathrm{os}}$ on $S_n$ as the reflexive and transitive closure of the relations
		\begin{align*}
		\label{eqn:daggerx1}
		\tag{$\dagger$} w_\sort &\preccurlyeq w, \mbox{ and}\\
		\label{eqn:daggerx2}
		\tag{$\ddagger$}ws_{i_1}\cdots s_{\alpha+1} &\preccurlyeq w \mbox{ whenever $w$ is nonidentity and sorted}.
		\end{align*}
	\end{definition}
	
	\begin{proposition}
		\label{prop:partialorder}
		The relation $\leq_{\mathrm{os}}$ is a partial order on $S_n$, and the identity is the minimum element.
	\end{proposition}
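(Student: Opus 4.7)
The plan is to observe that reflexivity and transitivity are automatic from the definition (as reflexive-transitive closures), so the only real content of the first assertion is antisymmetry. Both antisymmetry and the minimality of the identity will follow from a single monovariant, for which I would use the pair $(\#\fb(w),\,\ell(w))$ ordered lexicographically, where $\ell(w)$ denotes the number of inversions.

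To verify the monovariant, I would check that each generating relation with $u \neq v$ strictly decreases the pair. For $(\ddagger)$, Lemma~\ref{lem:fb-less-equal} says $\#\fb$ strictly drops. For $(\dagger)$ with $w_\sort \neq w$ (equivalently $\sigma(w) \neq e$), the same lemma gives $\#\fb(w_\sort) = \#\fb(w)$, so I need to see that $\ell(w_\sort) < \ell(w)$. The natural way is to split inversions of $w$ into three groups: both positions inside $[\alpha+1,i_1]$, both outside, and one of each. Sorting leaves the outside-outside pairs untouched, while for any outside position $j$ the number of inside positions $i$ with $w(i) > w(j)$ depends only on the multiset $w([\alpha+1,i_1]) = [h-\beta+1,h]$ (Lemma~\ref{lem:bij}), hence is preserved. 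The inside-inside inversions drop from $\ell(\sigma(w))$ to $0$, so $\ell(w_\sort) = \ell(w) - \ell(\sigma(w))$, which is strictly smaller whenever $w \neq w_\sort$. Consequently, along any chain of generating relations, $(\#\fb,\ell)$ weakly decreases and strictly decreases as soon as any step is nontrivial. This rules out any cycle $u \leq_{\mathrm{os}} v \leq_{\mathrm{os}} u$ with $u \neq v$, giving antisymmetry.

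For the second assertion, I would induct on the same monovariant to show $e \leq_{\mathrm{os}} w$ for every $w \in S_n$. The base case $w = e$ is trivial. For the inductive step with $w \neq e$, there are two cases. If $w$ is not sorted, then $(\dagger)$ gives $w_\sort \preccurlyeq w$ with strictly smaller monovariant, so induction yields $e \leq_{\mathrm{os}} w_\sort \leq_{\mathrm{os}} w$. If $w$ is sorted and $w \neq e$, I would first note that $w$ cannot be dominant: for dominant $w$ the conventions $\alpha = 0$, $i_1 = n$, $\beta = n$ make $\sigma(w) = w$, so sorted-and-dominant forces $w = e$. Hence the primary column data of $w$ involves a genuine non-standard-interval column, so $(\ddagger)$ applies and produces $ws_{i_1}\cdots s_{\alpha+1} \preccurlyeq w$ with strictly smaller $\#\fb$; induction again concludes.

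The only step that is not essentially immediate from previously established lemmas is the length identity $\ell(w_\sort) = \ell(w) - \ell(\sigma(w))$ used as the tiebreaker for $(\dagger)$; the rest of the argument is bookkeeping once the monovariant is in place. I expect that to be the only real work.
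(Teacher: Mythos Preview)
Your proof is correct, but it takes a slightly different route from the paper's. The paper uses only $\#\fb$ as its monovariant. From $\#\fb(u)=\#\fb(v)$ it concludes that every step in both chains must be of type $(\dagger)$; then, rather than invoking a tiebreaker like $\ell$, it simply observes that $w_{\sort}$ is always sorted and $(w_{\sort})_{\sort}=w_{\sort}$, so a chain consisting entirely of $(\dagger)$-steps collapses to a single permutation. Your approach instead upgrades the monovariant to the lex pair $(\#\fb,\ell)$ and pays for this with the length identity $\ell(w_\sort)=\ell(w)-\ell(\sigma(w))$, which you correctly derive by partitioning inversions. The paper's argument is a bit slicker because idempotence of $\sort$ is free, while you do a small amount of extra work; on the other hand, your monovariant is strictly decreasing on every nontrivial generating step, which makes both the antisymmetry and the induction for minimality of the identity completely uniform, whereas the paper leaves the minimality as ``an analogous argument''. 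Your handling of the sorted-nonidentity case (ruling out dominant $w$ so that Lemma~\ref{lem:fb-less-equal} genuinely applies) is also more explicit than the paper's.
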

	\begin{proof}
		Reflexivity and transitivity follow immediately from the definition. It remains to show antisymmetry. Assume we have $u,v\in S_n$ with $u\leq_{\mathrm{os}} v$ and $v\leq_{\mathrm{os}} u$. Then there are chains
		\begin{align*}
		u&=w_1\preccurlyeq w_2\preccurlyeq\cdots\preccurlyeq w_k=v,\mbox{ and }\\
		v&=w_1'\preccurlyeq w_2'\preccurlyeq\cdots\preccurlyeq w_m'=u.
		\end{align*}
		By Lemma \ref{lem:fb-less-equal}, applying $\#\fb$ to both chains yields
		\[\#\fb(u)\leq\#\fb(w_1)\leq \cdots \leq \#\fb(v)\mbox{ and } \#\fb(v)\leq \#\fb(w'_1)\leq\cdots\leq \#\fb(u). \]
		Thus, $\#\fb(u)=\#\fb(v)$, so the function $\#\fb(\cdot)$ is constant on both $\preccurlyeq$ chains. Consequently, all relations appearing in either chain of $\preccurlyeq$'s must be of type $(\dagger)$. This easily implies $u=v$, since $w_{\sort}=w$ whenever $w$ is sorted. The fact that the identity permutation is the minimum follows from an analogous argument.
	\end{proof}
	
	Recall the Demazure--Lascoux operators $\overline{\pi}_j$, defined by 
	\[\overline{\pi}_j(f) = \partial_j(x_j(1-x_{j+1})f). \] We use $\overline{\pi}_j$ to define an orthodontia polynomial $\mathscr{G}_w$.
	
	\begin{definition}
		Pick any $w\in S_n$, and suppose $w$ has orthodontic sequence
		\[\bm{i}(w) = (i_1,\ldots, i_\l),\quad \bm{k}(w)=(k_1, \ldots, k_n), \mbox{ and}\quad \bm{m}(w) = (m_1, \ldots, m_\l).\] 
		Define a polynomial $\mathscr{G}_w$ by 
		\[
		\mathscr{G}_w= \omega_1^{k_1}\cdots\omega_n^{k_n}\overline{\pi}_{i_1}(\omega_{i_1}^{m_1}\overline{\pi}_{i_2}(\omega_{i_2}^{m_2}\cdots\overline{\pi}_{i_\l}(\omega_{i_\l}^{m_\l})\cdots)),
		\]
		where $\omega_i$ is the fundamental weight $\omega_i=x_1\cdots x_i$.
	\end{definition}

	The following five lemmas form the technical heart of the proof that $\mathscr{G}_w=\mathfrak{G}_w$ (Theorem \ref{thm:groth-equality}).
	
	\begin{lemma}
		\label{lem:computational-groth1}
		Let $g\in\mathbb{C}[x_1,\ldots,x_n]$ be any polynomial, and fix $j\in[n-1]$. For any $\delta\geq 1$,
		\begin{align*}
		\overline{\partial}_{j}(x_{j}^\delta g)
		&=\partial_{j}(g)(x_{j+1}^{\delta}-x_{j}x_{j+1}^{\delta})+
		g\left(\sum_{q=0}^{\delta-1}x_{j}^qx_{j+1}^{\delta-1-q}\right)-g\left(\sum_{q=0}^{\delta-2}x_{j}^{q+1}x_{j+1}^{\delta-1-q}\right).
		\end{align*}
	\end{lemma}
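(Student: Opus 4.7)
The plan is to reduce this to two applications of the Leibniz rule for the divided difference operator $\partial_j$. Starting from the definition $\overline{\partial}_j(f) = \partial_j((1-x_{j+1})f)$, I would first split
\[\overline{\partial}_j(x_j^\delta g) = \partial_j(x_j^\delta g) - \partial_j(x_{j+1}\, x_j^\delta\, g).\]
Then I would apply the Leibniz rule $\partial_j(fh) = \partial_j(f)\,h + (s_j\cdot f)\,\partial_j(h)$ to each of the two terms, taking $h = g$. The $\partial_j(g)$-part of the result has coefficient
\[s_j\cdot x_j^\delta - s_j\cdot (x_{j+1}x_j^\delta) = x_{j+1}^\delta - x_j\, x_{j+1}^\delta,\]
which matches the first summand on the right-hand side of the claim.

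The remaining $g$-part equals $g\bigl[\partial_j(x_j^\delta) - \partial_j(x_{j+1} x_j^\delta)\bigr]$, so the calculation comes down to the two elementary divided differences. The first is the standard geometric sum
\[\partial_j(x_j^\delta) = \frac{x_j^\delta - x_{j+1}^\delta}{x_j - x_{j+1}} = \sum_{q=0}^{\delta-1} x_j^q x_{j+1}^{\delta-1-q},\]
and for the second I would pull out the common factor $x_j x_{j+1}$ before applying the same formula:
\[\partial_j(x_{j+1} x_j^\delta) = x_j x_{j+1}\cdot \frac{x_j^{\delta-1} - x_{j+1}^{\delta-1}}{x_j - x_{j+1}} = \sum_{q=0}^{\delta-2} x_j^{q+1} x_{j+1}^{\delta-1-q}.\]
Substituting these into the $g$-part yields exactly the two summations in the claimed identity.

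The entire argument is a direct expansion, so there is no serious obstacle; no induction, case analysis, or combinatorial input is required. The only care needed is bookkeeping with the index shifts in the two geometric sums, and observing that the edge case $\delta = 1$ behaves correctly because the second sum is then empty (consistent with $\partial_j(x_{j+1} x_j) = 0$, as $x_{j+1}x_j$ is $s_j$-symmetric).
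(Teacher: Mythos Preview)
Your argument is correct and cleaner than the paper's. Both proofs begin with the same splitting
\[
\overline{\partial}_j(x_j^\delta g)=\partial_j(x_j^\delta g)-\partial_j(x_j^\delta x_{j+1} g),
\]
but diverge in how the two pieces are handled. You apply the Leibniz rule $\partial_j(fh)=\partial_j(f)\,h+(s_j f)\,\partial_j(h)$ uniformly to both terms with $h=g$, reducing everything to the two elementary divided differences $\partial_j(x_j^\delta)$ and $\partial_j(x_{j+1}x_j^\delta)$; the latter you dispatch by pulling out the symmetric factor $x_jx_{j+1}$. The paper instead treats $\delta=1$ separately, and for $\delta>1$ uses an ad~hoc identity $\partial_j(x_j^\delta x_{j+1} g)=gx_j^{\delta-1}x_{j+1}+x_jx_{j+1}^2\,\partial_j(x_j^{\delta-2}g)$, which forces a second expansion of $\partial_j(x_j^{\delta-2}g)$ and a final regrouping of four terms. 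Your route avoids the case split and the auxiliary expansion entirely; the paper's route is more computational but self-contained in that it never names the Leibniz rule explicitly.
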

	\begin{proof}
		If $\delta=1$, we have
		\[\overline{\partial}_{j}(x_{j}g) =\partial_{j}\left((1-x_{j+1})x_{j}g\right) = \partial_{j}\left(x_{j}g\right)-\partial_j\left(x_{j}x_{j+1}g\right). \]
		It is easy to check
		\[\partial_j(x_jg)=x_{j+1}\partial_{j}(g)+g\quad\mbox{and}\quad\partial_j(x_jx_{j+1}g)=x_jx_{j+1}\partial_j(g).\]
		Thus,
			\[\overline{\partial}_{j}(x_{j}g) = x_{j+1}\partial_{j}(g)+g-x_jx_{j+1}\partial_j(g) = \partial_{j}(g)(x_{j+1}-x_jx_{j+1})+g. \]
		For $\delta>1$, expand out $\overline{\partial}_{j}$ to get
		\begin{align*}
			\overline{\partial}_{j}(x_{j}^\delta g)=\partial_{j}(x_{j}^\delta g) - \partial_{j}(x_{j}^{\delta}x_{j+1}g).
		\end{align*}
		Straightforward computations show that
		\begin{align}
			\label{eqn:1-groth}
			\partial_{j}(x_{j}^{\delta}g) 
			&=x_{j+1}^{\delta}\partial_{j}(g)+g\sum_{q=0}^{\delta-1}x_{j}^qx_{j+1}^{\delta-1-q}
		\end{align}
		and
		\begin{align*}
			\partial_{j}(x_{j}^{\delta}x_{j+1}g) 
			&= gx_{j}^{\delta-1}x_{j+1}+x_{j}x_{j+1}^2\partial_{j}(x_{j}^{\delta-2}g).
		\end{align*}
		Thus,
		\begin{align*}
			\overline{\partial}_{j}(x_{j}^\delta g)&=
			x_{j+1}^{\delta}\partial_{j}(g)+g\left(\sum_{q=0}^{\delta-1}x_{j}^qx_{j+1}^{\delta-1-q}\right)-
			gx_{j}^{\delta-1}x_{j+1}-x_{j}x_{j+1}^2\partial_{j}(x_{j}^{\delta-2}g).
		\end{align*}
		Using (\ref{eqn:1-groth}), we can expand $\partial_j(x_j^{\delta-2}g)$ as
		\begin{align*}
			\partial_{j}(x_{j}^{\delta-2}g) 
			&=x_{j+1}^{\delta-2}\partial_{j}(g)+g\sum_{q=0}^{\delta-3}x_{j}^qx_{j+1}^{\delta-3-q}.
		\end{align*}
		This implies 
		\begin{align*}
			\overline{\partial}_{j}(x_{j}^\delta g)
			&=x_{j+1}^{\delta}\partial_{j}(g)+g\left(\sum_{q=0}^{\delta-1}x_{j}^qx_{j+1}^{\delta-1-q}\right)-
			gx_{j}^{\delta-1}x_{j+1}-x_{j}x_{j+1}^2\left(x_{j+1}^{\delta-2}\partial_{j}(g)+g\sum_{q=0}^{\delta-3}x_{j}^qx_{j+1}^{\delta-3-q}\right)\\
			&=x_{j+1}^{\delta}\partial_{j}(g)+g\left(\sum_{q=0}^{\delta-1}x_{j}^qx_{j+1}^{\delta-1-q}\right)-
			gx_{j}^{\delta-1}x_{j+1}-\left(x_jx_{j+1}^{\delta}\partial_{j}(g)+g\sum_{q=0}^{\delta-3}x_{j}^{q+1}x_{j+1}^{\delta-1-q}\right)\\
			&=\partial_{j}(g)(x_{j+1}^{\delta}-x_jx_{j+1}^{\delta})+g\left(\sum_{q=0}^{\delta-1}x_{j}^qx_{j+1}^{\delta-1-q}\right)-
			gx_{j}^{\delta-1}x_{j+1}-g\left(\sum_{q=0}^{\delta-3}x_{j}^{q+1}x_{j+1}^{\delta-1-q}\right)\\
			&=\partial_{j}(g)(x_{j+1}^{\delta}-x_jx_{j+1}^{\delta})+g\left(\sum_{q=0}^{\delta-1}x_{j}^qx_{j+1}^{\delta-1-q}\right)-g\left(\sum_{q=0}^{\delta-2}x_{j}^{q+1}x_{j+1}^{\delta-1-q}\right).\qedhere
		\end{align*}
		
	\end{proof}

	\begin{lemma}
		\label{lem:computational-groth2}
		Let $g$ be a polynomial with 
		\[\overline{\partial}_{j+1}(g) = \cdots = \overline{\partial}_{j+\gamma - 1}(g)= g\quad\mbox{and}\quad \partial_{j+1}(g) = \cdots = \partial_{j+\gamma - 1}(g)= 0\]
		for some $\gamma\geq 2$. Then for any $\delta\geq 0$,
		\begin{align*}
		\overline{\partial}_{j+\gamma-1}\cdots \overline{\partial}_{j+1}\left(x_{j+1}^\delta g\right)=g.
		\end{align*}
	\end{lemma}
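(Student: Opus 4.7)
The plan is to reduce the statement to a computation with $g=1$ and then iterate Lemma~\ref{lem:computational-groth1}. The first move is to use the hypothesis: since $\partial_{j+k}(g)=0$ for $k=1,\ldots,\gamma-1$, the polynomial $g$ is symmetric in each adjacent pair $x_{j+k},x_{j+k+1}$, and in particular symmetric in $x_{j+1},\ldots,x_{j+\gamma}$. Combined with the standard identity $\overline{\partial}_k(hf)=h\,\overline{\partial}_k(f)$ whenever $\partial_k(h)=0$ (an easy consequence of the Leibniz rule for $\partial_k$), this lets me pass $g$ outside the entire nested composition. It therefore suffices to prove the scalar identity
\[
\overline{\partial}_{j+\gamma-1}\cdots \overline{\partial}_{j+1}(x_{j+1}^\delta)=1.
\]

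To attack this, I would apply Lemma~\ref{lem:computational-groth1} with $g=1$ to the innermost operator, producing the explicit two-variable polynomial
\[
K_\delta(a,b)\;:=\;\sum_{q=0}^{\delta-1} a^q b^{\delta-1-q}-\sum_{q=0}^{\delta-2} a^{q+1} b^{\delta-1-q},
\]
so that $\overline{\partial}_{j+1}(x_{j+1}^\delta)=K_\delta(x_{j+1},x_{j+2})$ and, by convention, $K_0=K_1=1$. The key technical ingredient of the proof is the recursive identity
\[
\overline{\partial}_{k+1}\bigl(K_\delta(x_k,x_{k+1})\bigr) = x_k + (1-x_k)\,K_{\delta-1}(x_{k+1},x_{k+2}),
\]
which I would verify by expanding each monomial $x_k^p x_{k+1}^r$ appearing in $K_\delta$, using that $x_k$ is symmetric in $x_{k+1},x_{k+2}$ to pull it outside, and then applying Lemma~\ref{lem:computational-groth1} termwise to $\overline{\partial}_{k+1}(x_{k+1}^r)$. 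The terms rearrange into the claimed nested form after a short computation.

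With the recursion in hand, I iterate it along the chain $\overline{\partial}_{j+2},\overline{\partial}_{j+3},\ldots,\overline{\partial}_{j+\gamma-1}$: each application peels off a layer $x_{j+k}+(1-x_{j+k})(\cdots)$ and drops the index of $K$ by one. Once the innermost $K$ reaches $K_1$ or $K_0$, which both equal $1$, the layers collapse successively via $x_r+(1-x_r)\cdot 1=1$, giving the desired value $1$. The main obstacle is the recursive identity in the previous paragraph: this is the one place where the structure of $\overline{\partial}$ (as opposed to $\partial$) produces a nonobvious cancellation, and it must be verified by direct expansion. The rest of the argument — pulling $g$ out and collapsing the telescoping product — is bookkeeping.
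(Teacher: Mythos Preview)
Your first move---pulling $g$ out using $\partial_{j+k}(g)=0$ to reduce to the scalar statement $\overline{\partial}_{j+\gamma-1}\cdots\overline{\partial}_{j+1}(x_{j+1}^\delta)=1$---is valid and is a simplification the paper does not make. The paper keeps $g$ throughout and inducts on $\delta$: it applies Lemma~\ref{lem:computational-groth1} to peel off $\overline{\partial}_{j+1}$, pulls the resulting powers of $x_{j+1}$ outside the remaining operators, and invokes the statement for $(j+1,\gamma-1)$ with smaller exponent on each term $x_{j+2}^{\delta-1-q}g$, after which the two sums telescope to $g$.

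The gap in your argument is the recursive identity: the claim $\overline{\partial}_{k+1}(K_\delta(x_k,x_{k+1}))=x_k+(1-x_k)K_{\delta-1}(x_{k+1},x_{k+2})$ is false for $\delta\geq 4$. The termwise expansion you propose actually yields
\[
\overline{\partial}_{k+1}\bigl(K_\delta(x_k,x_{k+1})\bigr)=x_k^{\delta-1}+(1-x_k)\sum_{r=1}^{\delta-1}x_k^{\,\delta-1-r}K_r(x_{k+1},x_{k+2}),
\]
which collapses to your formula only for $\delta\le 3$; at $\delta=4$ the two differ by $-x_k(1-x_k)(1-x_{k+1})(1-x_{k+2})$. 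With the correct recursion each step produces a \emph{sum} of $K_r$'s rather than a single $K_{\delta-1}$, so the ``peel one layer, drop the index by one'' iteration does not go through; you would need the same telescoping that drives the paper's induction. (Separately, note that the lemma as stated actually fails for $\delta\geq\gamma$---e.g.\ $g=1$, $\gamma=2$, $\delta=2$ gives $\overline{\partial}_{j+1}(x_{j+1}^2)=x_{j+1}+x_{j+2}-x_{j+1}x_{j+2}\neq 1$---so even a correct argument can only cover $\delta\leq\gamma-1$, which is all that Lemma~\ref{lem:computational-groth4} requires.)
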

	\begin{proof}
		We work by induction on $\delta$. The base case $\delta=0$ follows from the assumptions on $g$. Assume the result holds for all $\delta'<\delta$. From Lemma \ref{lem:computational-groth1}, we obtain
		\begin{align*}
			\overline{\partial}_{j+1}(x_{j+1}^\delta g)
			&=\partial_{j+1}(g)(x_{j+2}^{\delta}-x_{j+1}x_{j+2}^{\delta})+
			g\left(\sum_{q=0}^{\delta-1}x_{j+1}^qx_{j+2}^{\delta-1-q}\right)-g\left(\sum_{q=0}^{\delta-2}x_{j+1}^{q+1}x_{j+2}^{\delta-1-q}\right)\\
			&=g\left(\sum_{q=0}^{\delta-1}x_{j+1}^qx_{j+2}^{\delta-1-q}\right)-g\left(\sum_{q=0}^{\delta-2}x_{j+1}^{q+1}x_{j+2}^{\delta-1-q}\right)
		\end{align*}
		since $\partial_{j+1}(g)=0$ by assumption.
		Plugging into $\overline{\partial}_{j+\gamma-1}\cdots \overline{\partial}_{j+1}\left(x_{j+1}^\delta g\right)$ yields
		\begin{align*}
			\overline{\partial}_{j+\gamma-1}\cdots \overline{\partial}_{j+1}\left(x_{j+1}^\delta g\right)
			&=\overline{\partial}_{j+\gamma-1}\cdots \overline{\partial}_{j+2}
			\left(\sum_{q=0}^{\delta-1}x_{j+1}^qx_{j+2}^{\delta-1-q}g\right)-
			\overline{\partial}_{j+\gamma-1}\cdots \overline{\partial}_{j+2}\left(\sum_{q=0}^{\delta-2}x_{j+1}^{q+1}x_{j+2}^{\delta-1-q}g\right)\\
			&=\sum_{q=0}^{\delta-1}\overline{\partial}_{j+\gamma-1}\cdots \overline{\partial}_{j+2}\left(x_{j+1}^qx_{j+2}^{\delta-1-q}g\right)-
			\sum_{q=0}^{\delta-2}\overline{\partial}_{j+\gamma-1}\cdots \overline{\partial}_{j+2}\left(x_{j+1}^{q+1}x_{j+2}^{\delta-1-q}g\right)\\
			&=\sum_{q=0}^{\delta-1}x_{j+1}^q\overline{\partial}_{j+\gamma-1}\cdots \overline{\partial}_{j+2}\left(x_{j+2}^{\delta-1-q}g\right)-
			\sum_{q=0}^{\delta-2}x_{j+1}^{q+1}\overline{\partial}_{j+\gamma-1}\cdots \overline{\partial}_{j+2}\left(x_{j+2}^{\delta-1-q}g\right).\\
		\end{align*}
		Applying the induction assumption to each summand gives
		\begin{align*}
			\overline{\partial}_{j+\gamma-1}\cdots \overline{\partial}_{j+1}\left(x_{j+1}^\delta g\right)
			&=\sum_{q=0}^{\delta-1}x_{j+1}^qg-
			\sum_{q=0}^{\delta-2}x_{j+1}^{q+1}g = g. \qedhere
		\end{align*}
	\end{proof}

	\begin{lemma}
		\label{lem:computational-groth3}
		Let $g$ be a polynomial with 
		\[\overline{\partial}_{j+1}(g) = \cdots = \overline{\partial}_{j+\gamma - 1}(g)= g\quad\mbox{and}\quad \partial_{j+1}(g) = \cdots = \partial_{j+\gamma - 1}(g)= 0\]
		for some $\gamma\geq 2$. Set $g'=\overline{\pi}_j(g)$, $j'=j+1$, and $\gamma'=\gamma-1$. Then,
		\[\overline{\partial}_{j'+1}(g') = \cdots = \overline{\partial}_{j'+\gamma' - 1}(g')= g'\quad\mbox{and}\quad \partial_{j'+1}(g') = \cdots = \partial_{j'+\gamma' - 1}(g')= 0.\]
	\end{lemma}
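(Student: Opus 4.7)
\smallskip

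\noindent\textbf{Proof plan.} The content of the lemma is the statement that for every $k \in [j+2,\, j+\gamma-1]$, one has $\partial_k(g') = 0$ and $\overline{\partial}_k(g') = g'$ where $g' = \overline{\pi}_j(g)$. Since $k - j \geq 2$, the plan is to show that both $\partial_k$ and $\overline{\partial}_k$ commute with $\overline{\pi}_j$, whence the desired identities follow directly from the hypotheses $\partial_k(g) = 0$ and $\overline{\partial}_k(g) = g$.

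To carry this out, I would expand $\overline{\pi}_j(g) = \partial_j\!\bigl(x_j(1-x_{j+1})g\bigr)$ and then slide $\partial_k$ (respectively $\overline{\partial}_k = \partial_k \circ (1-x_{k+1})$) past each factor. Two simple observations drive the argument. First, since $|j-k| \geq 2$, the braid-type relations give $\partial_j\partial_k = \partial_k\partial_j$. Second, the polynomial $x_j(1-x_{j+1})$ is fixed by $s_k$ (as $k,k+1 \geq j+2$), and $1-x_{k+1}$ is fixed by $s_j$ (as $k+1 \geq j+3$); hence by the Leibniz-type identity for $\partial_i$ against an $s_i$-invariant factor, multiplication by $x_j(1-x_{j+1})$ commutes with $\partial_k$ and multiplication by $1-x_{k+1}$ commutes with $\partial_j$.

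Combining these, we get the chain
\[
\partial_k(g') \;=\; \partial_k\partial_j\!\bigl(x_j(1-x_{j+1})g\bigr)
\;=\; \partial_j\!\bigl(x_j(1-x_{j+1})\partial_k(g)\bigr) \;=\; 0,
\]
using $\partial_k(g)=0$, and analogously
\[
\overline{\partial}_k(g') \;=\; \partial_k\!\bigl((1-x_{k+1})\partial_j(x_j(1-x_{j+1})g)\bigr)
\;=\; \partial_j\!\bigl(x_j(1-x_{j+1})\overline{\partial}_k(g)\bigr)
\;=\; \overline{\pi}_j(g) \;=\; g',
\]
using $\overline{\partial}_k(g)=g$.

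There is no real obstacle here; the proof is entirely a commutation argument. The only point requiring care is verifying the symmetry/fixed-point conditions for the ranges of indices, namely that $k \geq j+2$ forces both $s_k$ to fix $x_j(1-x_{j+1})$ and $s_j$ to fix $1-x_{k+1}$, which is what licenses sliding these factors across $\partial_j$ and $\partial_k$ respectively.
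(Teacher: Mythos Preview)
Your proof is correct and follows essentially the same commutation argument as the paper. The only cosmetic difference is that the paper writes $\overline{\pi}_j(g)=\overline{\partial}_j(x_jg)$ and invokes the relation $\overline{\partial}_k\overline{\partial}_j=\overline{\partial}_j\overline{\partial}_k$ for $|k-j|>1$ directly, whereas you unpack $\overline{\pi}_j$ all the way down to $\partial_j$ and slide the factor $(1-x_{k+1})$ past $\partial_j$ by hand; both routes amount to the same ``indices at distance $\geq 2$ commute'' observation.
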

	\begin{proof}
		We need to show
		\[\overline{\partial}_{j+2}(\overline{\pi}_j(g)) = \cdots = \overline{\partial}_{j+\gamma-1}(\overline{\pi}_j(g))= \overline{\pi}_j(g)\quad\mbox{and}\quad \partial_{j+2}(\overline{\pi}_j(g)) = \cdots = \partial_{j+\gamma-1}(\overline{\pi}_j(g))= 0.\]
		Let $k\in [j+2,j+\gamma-1]$. Since $|k-j|>1$, it follows that $\overline{\partial}_k\overline{\partial}_j=\overline{\partial}_j\overline{\partial}_k$. This yields
		\[
			\overline{\partial}_{k}(\overline{\pi}_j(g))= 
			\overline{\partial}_{k}(\overline{\partial}_j(x_jg))=
			\overline{\partial}_{j}(\overline{\partial}_k(x_jg))=
			\overline{\partial}_{j}(x_j\overline{\partial}_k(g))=
			\overline{\pi}_{j}(\overline{\partial}_k(g))=
			\overline{\pi}_{j}(g).
		\]
		By identical argument replacing $\overline{\partial}_k$ by $\partial_k$, one obtains $\partial_{k}(\overline{\pi}_j(g))=0$.
	\end{proof}

	\begin{lemma}
		\label{lem:computational-groth4}
		Let $g$ be a polynomial with 
		\[\overline{\partial}_{j+1}(g) = \cdots = \overline{\partial}_{j+\gamma - 1}(g)= g\quad\mbox{and}\quad \partial_{j+1}(g) = \cdots = \partial_{j+\gamma - 1}(g)= 0\]
		for some $\gamma\geq 2$. Then,
		\begin{align*}
		\overline{\partial}_{j+\gamma-1}\cdots\overline{\partial}_j\left(x_j^\gamma g\right) = \overline{\pi}_{j+\gamma-1}\cdots\overline{\pi}_j\left(g\right).
		\end{align*}
	\end{lemma}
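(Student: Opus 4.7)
The plan is to induct on $\gamma \geq 2$, with the three preceding lemmas playing complementary roles: Lemma \ref{lem:computational-groth1} for the algebraic decomposition, Lemma \ref{lem:computational-groth2} to kill the ``noise'' terms, and Lemma \ref{lem:computational-groth3} to propagate the hypotheses through the induction. The base case can be absorbed into the inductive step by treating the ``$\gamma=1$'' instance as the definitional identity $\overline{\partial}_j(x_j g)=\overline{\pi}_j(g)$.

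The crucial step is to rewrite $\overline{\partial}_j(x_j^\gamma g)$ as a main term plus a remainder killed by the remaining operators. Applying Lemma \ref{lem:computational-groth1} with $\delta = \gamma$ to the left hand side and with $\delta = 1$ to $\overline{\pi}_j(g) = \overline{\partial}_j(x_j g)$, I expect the $\partial_j(g)(x_{j+1}^\gamma - x_j x_{j+1}^\gamma)$ contributions to cancel, yielding
\[
\overline{\partial}_j(x_j^\gamma g) \;=\; x_{j+1}^{\gamma-1}\overline{\pi}_j(g) \;+\; R, \qquad R \;=\; g(1-x_{j+1})\sum_{q=1}^{\gamma-1} x_j^q x_{j+1}^{\gamma-1-q}.
\]

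Next, apply $\overline{\partial}_{j+\gamma-1}\cdots\overline{\partial}_{j+1}$ to both sides. For the remainder $R$, note that $x_j^q$ is symmetric in $x_k$ and $x_{k+1}$ for every $k \geq j+1$, so it commutes past each $\overline{\partial}_{j+s}$. Expanding $(1-x_{j+1})x_{j+1}^{\gamma-1-q} = x_{j+1}^{\gamma-1-q} - x_{j+1}^{\gamma-q}$ and pulling out $x_j^q$ leaves
\[
\overline{\partial}_{j+\gamma-1}\cdots\overline{\partial}_{j+1}(R) \;=\; \sum_{q=1}^{\gamma-1} x_j^q\Bigl[\overline{\partial}_{j+\gamma-1}\cdots\overline{\partial}_{j+1}(g x_{j+1}^{\gamma-1-q}) - \overline{\partial}_{j+\gamma-1}\cdots\overline{\partial}_{j+1}(g x_{j+1}^{\gamma-q})\Bigr],
\]
and Lemma \ref{lem:computational-groth2} collapses each bracketed pair to $g - g = 0$.

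Finally, the main term $x_{j+1}^{\gamma-1}\overline{\pi}_j(g)$ is handled by induction: Lemma \ref{lem:computational-groth3} guarantees that $g' := \overline{\pi}_j(g)$ satisfies the hypotheses of Lemma \ref{lem:computational-groth4} with $j' = j+1$ and $\gamma' = \gamma - 1$, so the inductive hypothesis gives $\overline{\partial}_{j+\gamma-1}\cdots\overline{\partial}_{j+1}(x_{j+1}^{\gamma-1}\overline{\pi}_j(g)) = \overline{\pi}_{j+\gamma-1}\cdots\overline{\pi}_{j+1}(\overline{\pi}_j(g))$, which is the desired right hand side. The main obstacle I anticipate is simply the bookkeeping of the three sums in the decomposition of $\overline{\partial}_j(x_j^\gamma g)$; once the cancellation producing $R$ is carried out carefully, the rest of the argument is a clean induction driven by the interplay of Lemmas \ref{lem:computational-groth2} and \ref{lem:computational-groth3}.
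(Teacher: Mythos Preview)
Your proposal is correct and follows essentially the same approach as the paper: induction on $\gamma$, with Lemma~\ref{lem:computational-groth1} providing the expansion, Lemma~\ref{lem:computational-groth2} killing the remainder, and Lemma~\ref{lem:computational-groth3} feeding the induction via $g'=\overline{\pi}_j(g)$. The only organizational difference is that you compute the remainder $R=\overline{\partial}_j(x_j^\gamma g)-x_{j+1}^{\gamma-1}\overline{\pi}_j(g)$ in closed form \emph{before} applying the remaining $\overline{\partial}$'s, whereas the paper first uses Lemma~\ref{lem:computational-groth2} on the two $g$-sums to reduce the left side to $\overline{\partial}_{j+\gamma-1}\cdots\overline{\partial}_{j+1}\bigl(\partial_j(g)(x_{j+1}^\gamma-x_jx_{j+1}^\gamma)+g\bigr)$ and only then subtracts; the cancellation of the $\partial_j(g)$ terms and the final appeal to Lemma~\ref{lem:computational-groth2} are identical in both.
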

	\begin{proof}
		We work by induction on $\gamma$, with the base case $\gamma=1$ simply being the identity
		\[\overline{\partial}_{j}\left(x_jg\right)=\overline{\pi}_j\left(g\right).\] 
		From Lemma \ref{lem:computational-groth1}, we obtain 
		\begin{align*}
		\overline{\partial}_{j}(x_{j}^\gamma g)
		&=\partial_{j}(g)(x_{j+1}^{\gamma}-x_{j}x_{j+1}^{\gamma})+
		g\left(\sum_{q=0}^{\gamma-1}x_{j}^qx_{j+1}^{\gamma-1-q}\right)-g\left(\sum_{q=0}^{\gamma-2}x_{j}^{q+1}x_{j+1}^{\gamma-1-q}\right)
		\end{align*}
		By linearity and Lemma \ref{lem:computational-groth2},
		\begin{align}
			\overline{\partial}_{j+\gamma - 1}\cdots\overline{\partial}_j\left(x_j^\gamma g\right) 
			&=\overline{\partial}_{j+\gamma - 1}\cdots\overline{\partial}_{j+1}
			\left(\partial_{j}(g)(x_{j+1}^{\gamma}-x_{j}x_{j+1}^{\gamma})\right)+g\nonumber\\
			&=\overline{\partial}_{j+\gamma - 1}\cdots\overline{\partial}_{j+1}
			\left(\partial_{j}(g)(x_{j+1}^{\gamma}-x_{j}x_{j+1}^{\gamma})+g\right).
			\label{eqn:2-groth}
		\end{align}
		Note the second equality follows from the assumptions on $g$.
		
		On the other hand, Lemma \ref{lem:computational-groth1} implies
		\begin{align}
			\label{eqn:3-groth}
			x_{j+1}^{\gamma-1}\overline{\pi}_j(g)&=x_{j+1}^{\gamma-1}\overline{\partial}_j(x_jg)\nonumber\\
			&=\partial_{j}(g)(x_{j+1}-x_{j}x_{j+1})x_{j+1}^{\gamma-1}+gx_{j+1}^{\gamma-1}.
		\end{align}
		
		We claim that
		\begin{align*}
			\overline{\partial}_{j+\gamma - 1}\cdots\overline{\partial}_{j+1}
			\left(\overline{\partial}_j\left(x_j^\gamma g\right) - x_{j+1}^{\gamma-1}\overline{\pi}_j(g)\right)=0.
		\end{align*}
		Using (\ref{eqn:2-groth}) and (\ref{eqn:3-groth}), we compute
		\begin{align*}
			\overline{\partial}_{j+\gamma - 1}\cdots\overline{\partial}_{j+1}
			\left(\overline{\partial}_j\left(x_j^\gamma g\right) - x_{j+1}^{\gamma-1}\overline{\pi}_j(g)\right)&=
			\overline{\partial}_{j+\gamma - 1}\cdots\overline{\partial}_{j+1}\left( g-x_{j+1}^{\gamma-1}g\right).\\
		\end{align*}
		Then by Lemma \ref{lem:computational-groth2} and the assumptions on $g$,
		\begin{align*}
			\overline{\partial}_{j+\gamma - 1}\cdots\overline{\partial}_{j+1}
			\left(\overline{\partial}_j\left(x_j^\gamma g\right) - x_{j+1}^{\gamma-1}\overline{\pi}_j(g)\right)
			&=\overline{\partial}_{j+\gamma - 1}\cdots\overline{\partial}_{j+1}\left(g-x_{j+1}^{\gamma-1}g\right)\\
			&=\overline{\partial}_{j+\gamma - 1}\cdots\overline{\partial}_{j+1}\left(g\right)-\overline{\partial}_{j+\gamma - 1}\cdots\overline{\partial}_{j+1}\left(x_{j+1}^{\gamma-1}g\right)\\
			&=g-g\\
			&=0.
		\end{align*}
		This establishes the claim, which implies
		\[\overline{\partial}_{j+\gamma - 1}\cdots\overline{\partial}_{j}
		\left(x_j^\gamma g\right)= \overline{\partial}_{j+\gamma - 1}\cdots\overline{\partial}_{j+1}\left( x_{j+1}^{\gamma-1}\overline{\pi}_j(g)\right). \]
		
		By Lemma \ref{lem:computational-groth3}, the polynomial $g'=\overline{\pi}_j(g)$ satsifies the assumptions of the induction hypothesis with $j'=j+1$ and $\gamma'=\gamma-1$. Then by induction,
		\begin{align*}
			\overline{\partial}_{j+\gamma - 1}\cdots\overline{\partial}_{j}
			\left(x_j^\gamma g\right)
			&=\overline{\partial}_{j+\gamma - 1}\cdots\overline{\partial}_{j+1}\left( x_{j+1}^{\gamma-1}\overline{\pi}_j(g)\right)\\
			&=\overline{\partial}_{j'+\gamma' - 1}\cdots\overline{\partial}_{j'+1}\left( x_{j+1}^{\gamma-1}g'\right)\\
			&=\overline{\pi}_{j'+\gamma' - 1}\cdots\overline{\pi}_{j'+1}\left(g'\right)\\
			&=\overline{\pi}_{j+\gamma - 1}\cdots\overline{\pi}_j\left(g\right).\qedhere
		\end{align*}
	\end{proof}
	
	\begin{lemma}
		\label{lem:computational-groth5}
		Let $w\in S_n$ be any sorted permutation. Suppose $w$ has primary column data $(h,C,\alpha,i_1,\beta)$. Set $g = x_{\alpha+1}^{-\beta}\mathfrak{G}_{ws_{i_1}\cdots s_{\alpha+1}}$. Let $\gamma=\beta$ and $j = \alpha+1$. Then 
		\[\overline{\partial}_{j+1}(g) = \cdots = \overline{\partial}_{j+\gamma - 1}(g)= g\quad\mbox{and}\quad \partial_{j+1}(g) = \cdots = \partial_{j+\gamma - 1}(g)= 0.\]
	\end{lemma}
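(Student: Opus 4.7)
The plan is to reduce the lemma directly to Proposition~\ref{prop:repeatoperators} by identifying the ascents of $w' := ws_{i_1}\cdots s_{\alpha+1}$. With the right-action convention, each $s_j$ swaps positions $j$ and $j+1$; applying $s_{i_1}, s_{i_1-1}, \ldots, s_{\alpha+1}$ in this order to $w$ cyclically shifts the values in positions $\alpha+1$ through $i_1+1$, yielding $w'(\alpha+1) = w(i_1+1)$ and $w'(k) = w(k-1)$ for $k \in [\alpha+2, i_1+1]$, with all other values unchanged. Because $w$ is sorted, Lemma~\ref{lem:bij} says $w$ restricts to the increasing bijection $[\alpha+1, i_1] \to [h-\beta+1, h]$, so $w'$ restricts to the increasing bijection $[\alpha+2, i_1+1] \to [h-\beta+1, h]$. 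Hence $w'$ has an ascent at every $k \in [\alpha+2, i_1]$, which is exactly the range $[j+1, j+\gamma-1]$ in the lemma's notation.

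Proposition~\ref{prop:repeatoperators} then immediately yields $\partial_k(\mathfrak{G}_{w'}) = 0$ and $\overline{\partial}_k(\mathfrak{G}_{w'}) = \mathfrak{G}_{w'}$ for each such $k$. To transfer these identities to $g = x_{\alpha+1}^{-\beta}\mathfrak{G}_{w'}$, note that since $k \geq \alpha+2$, the factor $x_{\alpha+1}^{\pm\beta}$ is symmetric in $x_k, x_{k+1}$. By the Leibniz-type property of $\partial_k$, both $\partial_k$ and $\overline{\partial}_k$ commute with multiplication by $x_{\alpha+1}^{\pm\beta}$; consequently $\partial_k(g) = x_{\alpha+1}^{-\beta}\partial_k(\mathfrak{G}_{w'}) = 0$ and $\overline{\partial}_k(g) = x_{\alpha+1}^{-\beta}\overline{\partial}_k(\mathfrak{G}_{w'}) = g$, as desired.

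The one delicate point is that $g$ must be a genuine polynomial for the surrounding applications (notably Lemma~\ref{lem:computational-groth4}), i.e.\ $x_{\alpha+1}^\beta$ must actually divide $\mathfrak{G}_{w'}$. This is not required for the identities above, which are valid in the Laurent polynomial ring regardless; but in the inductive proof of $\mathfrak{G}_w = \mathscr{G}_w$, where $w' <_{\mathrm{os}} w$ holds by construction, the inductive hypothesis gives $\mathfrak{G}_{w'} = \mathscr{G}_{w'}$, and Theorem~\ref{thm:sortedorthodontia}(v) exhibits the leading factor $\omega_{\alpha+1}^{\beta + m_\beta}$ in $\mathscr{G}_{w'}$, making the required divisibility transparent. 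The only substantive step is the identification of the ascents of $w'$; the commutation argument is routine.
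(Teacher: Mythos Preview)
Your proof is correct and follows essentially the same approach as the paper: identify that $w'=ws_{i_1}\cdots s_{\alpha+1}$ has ascents at each position $k\in[\alpha+2,i_1]$, apply Proposition~\ref{prop:repeatoperators} to $\mathfrak{G}_{w'}$, and then pull the factor $x_{\alpha+1}^{-\beta}$ through $\partial_k$ and $\overline{\partial}_k$ using that it is symmetric in $x_k,x_{k+1}$. Your extra paragraph on the polynomiality of $g$ is a helpful aside but not needed for the lemma itself, as you note.
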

	\begin{proof}
		The choice of $j$ and $\gamma$ make $j+1=\alpha+2$ and $j+\gamma-1=i_1$.
		Since $w$ is sorted, $w_{\alpha+1}<\cdots<w_{i_1}$. Then $ws_{i_1}\cdots s_{\alpha+1}$ has ascents at positions $\alpha+2,\ldots,i_1$. Fix $k\in [\alpha+2,i_1]$. By Proposition \ref{prop:repeatoperators}, we obtain
		\[x_{\alpha+1}^\beta\overline{\partial}_k(g)=\overline{\partial}_k(x_{\alpha+1}^\beta g)=\overline{\partial}_k(\mathfrak{G}_{ws_{i_1}\cdots s_{\alpha+1}})=\mathfrak{G}_{ws_{i_1}\cdots s_{\alpha+1}}\]
		and
		\[x_{\alpha+1}^\beta\partial_k(g)=\partial_k(x_{\alpha+1}^\beta g)=\partial_k(\mathfrak{G}_{ws_{i_1}\cdots s_{\alpha+1}})=0.\]
		Thus,
		\[\overline{\partial}_{k}(g)= g\quad\mbox{and}\quad \partial_{k}(g)= 0.\qedhere\]
	\end{proof}
	
	\begin{theorem}
		\label{thm:groth-equality}
		For any $w\in S_n$, $\mathscr{G}_w=\mathfrak{G}_w$.
	\end{theorem}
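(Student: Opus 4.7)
My plan is to induct on the orthodontic sort order $\leq_{\mathrm{os}}$. By Proposition \ref{prop:partialorder} this order is well-founded with minimum the identity $e$, and both $\mathfrak{G}_e$ and $\mathscr{G}_e$ equal $1$ since $D(e)=\emptyset$ forces all orthodontic data to be trivial. The inductive step splits into two cases matching the two generating relations $(\dagger)$ and $(\ddagger)$ of $\leq_{\mathrm{os}}$.

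First, suppose $w$ is not sorted, so $w_\sort <_{\mathrm{os}} w$. Proposition \ref{prop:grothunsort} gives $\mathfrak{G}_w / \mathfrak{G}_{w_\sort} = x_{\alpha+1}^{\lambda_1}\cdots x_{i_1}^{\lambda_\beta}$, where $\lambda$ is the shape of $D(\sigma(w))$. Proposition \ref{prop:unsortedorthodontia} says $\bm{i}$ and $\bm{m}$ are unchanged from $w_\sort$ to $w$, so $\mathscr{G}_w/\mathscr{G}_{w_\sort}$ is the purely monomial expression
\[
\omega_\alpha^{-\sum_{s=1}^\beta \bm{k}(\sigma)_s}\prod_{s=1}^{\beta}\omega_{\alpha+s}^{\bm{k}(\sigma)_s}.
\]
Using $\bm{k}(\sigma)_s = \lambda_s - \lambda_{s+1}$ (the number of columns of the Young diagram $D(\sigma)$ equal to $[s]$), a short telescoping after substituting $\omega_{\alpha+s} = \omega_\alpha\,x_{\alpha+1}\cdots x_{\alpha+s}$ collapses this to the same $x_{\alpha+1}^{\lambda_1}\cdots x_{i_1}^{\lambda_\beta}$, so the inductive hypothesis $\mathscr{G}_{w_\sort} = \mathfrak{G}_{w_\sort}$ closes this case.

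Second, suppose $w$ is sorted and nonidentity, with primary column data $(h,C,\alpha,i_1,\beta)$, and set $w' = ws_{i_1}\cdots s_{\alpha+1}$. Since $w$ is sorted with $w(\alpha+j) = h-\beta+j$ and $w(i_1+1)>h+1$, a direct computation shows each adjacent swap in the product $s_{i_1}s_{i_1-1}\cdots s_{\alpha+1}$ applied to $w$ occurs at an ascent, so iterating the recursive definition yields
\[
\mathfrak{G}_w = \overline{\partial}_{i_1}\overline{\partial}_{i_1-1}\cdots\overline{\partial}_{\alpha+1}(\mathfrak{G}_{w'}).
\]
Lemma \ref{lem:computational-groth5} verifies the hypotheses of Lemma \ref{lem:computational-groth4} for $x_{\alpha+1}^{-\beta}\mathfrak{G}_{w'}$ with $j = \alpha+1$, $\gamma = \beta$, converting the iterated $\overline{\partial}$'s into iterated $\overline{\pi}$'s acting on $x_{\alpha+1}^{-\beta}\mathfrak{G}_{w'}$. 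Parts (i) and (iv) of Theorem \ref{thm:sortedorthodontia} let me rewrite
\[
\mathscr{G}_w = \omega_1^{k_1}\cdots\omega_n^{k_n}\,\overline{\pi}_{i_1}\cdots\overline{\pi}_{\alpha+1}(\omega_{\alpha+1}^{m_\beta}\,g),
\]
where $g$ is the common tail $\overline{\pi}_{i_{\beta+1}}(\omega_{i_{\beta+1}}^{m_{\beta+1}}\cdots)$, and by the inductive hypothesis together with part (v) of Theorem \ref{thm:sortedorthodontia}, $\mathfrak{G}_{w'} = \omega_1^{k_1'}\cdots\omega_n^{k_n'}\,g$. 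Using $k_{\alpha+1} = 0$ from Theorem \ref{thm:sortedorthodontia}(iii), $k_\alpha' = k_\alpha-\beta$, $k_{\alpha+1}' = \beta + m_\beta$, and $\omega_{\alpha+1}=x_{\alpha+1}\omega_\alpha$, one computes directly that $x_{\alpha+1}^{-\beta}\omega_1^{k_1'}\cdots\omega_n^{k_n'} = \omega_1^{k_1}\cdots\omega_n^{k_n}\cdot\omega_{\alpha+1}^{m_\beta}$, where the same identity holds in the corner case $\alpha=0$.

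Finally, Theorem \ref{thm:sortedorthodontia}(iii) gives $k_p=0$ for every $p \in [\alpha+1, i_1]$, which makes the prefactor $\omega_1^{k_1}\cdots\omega_n^{k_n}$ symmetric in each consecutive pair $(x_p, x_{p+1})$ throughout this range; hence it commutes past the block $\overline{\pi}_{i_1}\cdots\overline{\pi}_{\alpha+1}$, delivering $\mathfrak{G}_w = \mathscr{G}_w$. I expect the main obstacle to be the bookkeeping in the sorted case: tracking the three different descriptions of the same iterated operator (recursive via $\overline{\partial}$, reduced via $\overline{\pi}$ after Lemma \ref{lem:computational-groth4}, and orthodontic via the definition of $\mathscr{G}_w$) and verifying all prefactors line up, with particular care in the $\alpha=0$ boundary case where there is no $\omega_\alpha$ available to absorb. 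Theorem \ref{thm:sortedorthodontia} and the sequence of computational lemmas \ref{lem:computational-groth1}--\ref{lem:computational-groth5} were evidently designed precisely to make this identification routine.
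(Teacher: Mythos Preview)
Your proposal is correct and follows essentially the same approach as the paper's proof: induction on $\leq_{\mathrm{os}}$, with the unsorted case reduced via Propositions~\ref{prop:grothunsort} and~\ref{prop:unsortedorthodontia}, and the sorted case handled by combining Theorem~\ref{thm:sortedorthodontia} with Lemmas~\ref{lem:computational-groth4}--\ref{lem:computational-groth5} to identify $\overline{\partial}_{i_1}\cdots\overline{\partial}_{\alpha+1}(\mathfrak{G}_{w'})$ with $\overline{\pi}_{i_1}\cdots\overline{\pi}_{\alpha+1}(x_{\alpha+1}^{-\beta}\mathfrak{G}_{w'})$ and then with $\mathscr{G}_w$. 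Your telescoping verification in the unsorted case is a bit more explicit than the paper's, and you should note that Lemma~\ref{lem:computational-groth4} is stated for $\gamma\geq 2$, so the case $\beta=1$ (where the identity $\overline{\partial}_{\alpha+1}(x_{\alpha+1}g)=\overline{\pi}_{\alpha+1}(g)$ is just the definition) technically deserves a one-line mention, as the paper does.
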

	\begin{proof}
		Start by extending the orthodontic sort order $\leq_{\mathrm{os}}$ to a linear order $L$ on $S_n$, viewed as a bijection $L:[n!]\to S_n$. We prove by induction on $j$ that 
		\[\mathscr{G}_{L(j)} = \mathfrak{G}_{L(j)}. \]
		For the base case $j=1$, Proposition \ref{prop:partialorder} implies $L(1)$ is the identity permutation $\mathrm{id}\in S_n$. It is easy to check 
		\[\mathscr{G}_{\mathrm{id}}=1=\mathfrak{G}_{\mathrm{id}}. \]
		
		Now, assume that $\mathscr{G}_{L(j')} = \mathfrak{G}_{L(j')}$ for all $j'<j$. Set $w=L(j)$, and let $(h,C,\alpha,i_1,\beta)$ be the primary column data of $w$. We first dispense with the case that $w$ is not a sorted permutation. 
		
		Suppose that $\sigma(w)$ has Rothe diagram equal to the Young diagram of $\lambda=(\lambda_1,\ldots,\lambda_{\beta})$. It follows from Proposition \ref{prop:unsortedorthodontia} that
		\[\mathscr{G}_w=x_{\alpha+1}^{\lambda_1}\cdots x_{i_1}^{\lambda_{\beta}} \mathscr{G}_{w_\sort}. \]
		The defining relation (\ref{eqn:daggerx1}) of $\leq_{\mathrm{os}}$ implies that $w_\sort<_{\mathrm{os}} w$. Then by induction,
		\[\mathscr{G}_{w_\sort}=\mathfrak{G}_{w_\sort}. \] 
		Applying Proposition \ref{prop:grothunsort} yields
		\begin{align*}
			x_{\alpha+1}^{\lambda_1}\cdots x_{i_1}^{\lambda_{\beta}}\mathfrak{G}_{w_\sort} = \mathfrak{G}_w.
		\end{align*}
		This completes the case that $w$ is not sorted.
		
		Now, assume that $w$ is a sorted permutation. Parts $\textup{(i)}-\textup{(iv)}$ of Theorem \ref{thm:sortedorthodontia} imply
		\begin{align}
			\label{eqn:5-groth}
			\mathscr{G}_w = \omega_1^{k_1}\cdots\omega_\alpha^{k_\alpha}\omega_{i_1+1}^{k_{i_1+1}}\cdots \omega_n^{k_n}\overline{\pi}_{i_1}(\overline{\pi}_{i_1 - 1}(\cdots\overline{\pi}_{\alpha+1}(\omega_{\alpha+1}^{m_\beta}(\overline{\pi}_{i_{\beta+1}}(\cdots\overline{\pi}_{i_\l}(\omega_{i_\l}^{m_\l})\cdots)))\cdots)).
		\end{align}
		The fundamental weights $\omega_1, \ldots, \omega_\alpha, \omega_{i_1+1}, \ldots, \omega_n$ are fixed under the actions of $s_{i_1}, s_{i_1 - 1}, \ldots, s_{\alpha+1}$, so we may rewrite \eqref{eqn:5-groth} as
		\[
			\mathscr{G}_w = \overline{\pi}_{i_1}(\overline{\pi}_{i_1-1}(\cdots\overline{\pi}_{\alpha+1}(\omega_1^{k_1}\cdots\omega_\alpha^{k_\alpha}\omega_{\alpha+1}^{m_\beta}\omega_{i_1+1}^{k_{i_1+1}}\cdots \omega_n^{k_n}(\overline{\pi}_{i_{\beta+1}}(\cdots\overline{\pi}_{i_\l}(\omega_{i_\l}^{m_\l})\cdots))\cdots)\cdots)).
		\]
		
		On the other hand, part $\textup{(v)}$ of Theorem~\ref{thm:sortedorthodontia} asserts that
		\[
			\mathscr{G}_{ws_{i_1}\cdots s_{\alpha+1}} =\omega_1^{k_1}\cdots\omega_\alpha^{k_\alpha-\beta}\omega_{\alpha+1}^{\beta+m_\beta}\omega_{i_1+1}^{k_{i_1+1}}\cdots\omega_n^{k_n}(\overline{\pi}_{i_{\beta+1}}(\cdots\overline{\pi}_{i_\l}(\omega_{i_\l}^{m_\l})\cdots)).
		\]
		Thus, we obtain
		\begin{align*}
			\mathscr{G}_{w} = \overline{\pi}_{i_1}(\overline{\pi}_{i_1 - 1}(\cdots\overline{\pi}_{\alpha+1}(x_{\alpha+1}^{-\beta} \mathscr{G}_{ws_{i_1}\cdots s_{\alpha+1}})\cdots)).
		\end{align*}
		The defining relation (\ref{eqn:daggerx2}) of $\leq_{\mathrm{os}}$ implies that $ws_{i_1}\cdots s_{\alpha+1}<_{\mathrm{os}} w$. Then by induction,
		\[\mathscr{G}_{ws_{i_1}\cdots s_{\alpha+1}}=\mathfrak{G}_{ws_{i_1}\cdots s_{\alpha+1}}. \] 
		Consequently, we obtain
		\begin{equation}
			\label{eqn:6-groth}
			\mathscr{G}_w = \overline{\pi}_{i_1}(\overline{\pi}_{i_1-1}(\cdots\overline{\pi}_{\alpha+1}(x_{\alpha+1}^{-\beta}\mathfrak{G}_{ws_{i_1}\cdots s_{\alpha+1}})\cdots)).
		\end{equation}
		If $\beta=1$, then $i_1=\alpha+1$ and (\ref{eqn:6-groth}) reduces to
		\begin{align*}
			\mathscr{G}_w &= \overline{\pi}_{\alpha+1}(x_{\alpha+1}^{-1}\mathfrak{G}_{ws_{\alpha+1}}) 
			=\overline{\partial}_{\alpha+1}(\mathfrak{G}_{ws_{\alpha+1}})
			=\mathfrak{G}_w,
		\end{align*}
		completing the proof.
		
		Otherwise, $\beta\geq 2$. Set $g = x_{\alpha+1}^{-\beta}\mathfrak{G}_{ws_{i_1}\cdots s_{\alpha+1}}$. By Lemma \ref{lem:computational-groth5}, $g$ meets the assumptions of Lemma \ref{lem:computational-groth4} with $\gamma=\beta$ and $j=\alpha+1$. Hence,
		\[
			\mathscr{G}_w = \overline{\partial}_{i_1}\cdots\overline{\partial}_{\alpha+1}(\mathfrak{G}_{ws_{i_1}\cdots s_{\alpha+1}}).
		\]		
		Since $w$ is sorted, each permutation in the list
		\[w,\, ws_{i_1},\, w_{s_{i_1}s_{i_1-1}},\ldots,\, w_{s_{i_1}\cdots s_{\alpha+1}}\]
		covers the previous in the weak Bruhat order. Thus, the recursive definition of $\mathfrak{G}_w$ implies 
		\[
			\overline{\partial}_{i_1}\cdots\overline{\partial}_{\alpha+1}(\mathfrak{G}_{ws_{i_1}\cdots s_{\alpha+1}}) = \mathfrak{G}_w. \qedhere
		\]	
	\end{proof}
	
	As an immediate corollary, we obtain Theorem \ref{thm:groth-for-intro}, restated here for convenience.
	\begin{namedtheorem}[\ref{thm:groth-for-intro}]
		Let $w\in S_n$ have orthodontic sequence 
		\[\bm{i}(w) = (i_1,\ldots, i_\l),\quad \bm{k}(w)=(k_1, \ldots, k_n), \mbox{ and}\quad \bm{m}(w) = (m_1, \ldots, m_\l).\]
		Then 
		\[\mathfrak{G}_w = \omega_1^{k_1}\cdots\omega_n^{k_n}\overline{\pi}_{i_1}(\omega_{i_1}^{m_1}\overline{\pi}_{i_2}(\omega_{i_2}^{m_2}\cdots\overline{\pi}_{i_\l}(\omega_{i_\l}^{m_\l})\cdots)),
		\]
		where $\omega_i$ is the fundamental weight $\omega_i=x_1\cdots x_i$.
	\end{namedtheorem}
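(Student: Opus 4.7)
The plan is to define $\mathscr{G}_w$ as the right-hand side of the displayed formula, and to prove $\mathscr{G}_w = \mathfrak{G}_w$ by induction on a suitable well-founded partial order on $S_n$. Both sides are determined by the orthodontic data of $w$, but the right-hand side is ``ascending'' in degree while the usual recursion for $\mathfrak{G}_w$ descends from $w_0$, so I would not try to match them step-by-step in the Grothendieck recursion directly. Instead, the order to induct over should be designed so that both of the natural moves appearing in the orthodontic algorithm --- sorting, and removing the leftmost non-standard column --- strictly decrease a nonnegative statistic, with the identity as the unique minimum. The number of fallen boxes $\#\fb(w)$ is the natural candidate, and Lemma \ref{lem:fb-less-equal} already shows it works. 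The corresponding relation, the orthodontic sort order $\leq_{\mathrm{os}}$, is thus the one to use; antisymmetry follows because a cycle in the covering relations would force $\#\fb$ to be constant along it, which collapses every step to the identity sort move.

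The inductive step splits according to whether $w$ is sorted. If $w$ is not sorted, Proposition \ref{prop:unsortedorthodontia} tells us $\bm{i}$ and $\bm{m}$ are unchanged by sorting and $\bm{k}$ shifts by precisely the orthodontic data of the dominant permutation $\sigma(w)$; the corresponding shift in $\omega_1^{k_1}\cdots\omega_n^{k_n}$ is exactly the monomial factor $x_{\alpha+1}^{\lambda_1}\cdots x_{i_1}^{\lambda_\beta}$ arising on the Grothendieck side in Proposition \ref{prop:grothunsort}. So $\mathscr{G}_w = x_{\alpha+1}^{\lambda_1}\cdots x_{i_1}^{\lambda_\beta}\mathscr{G}_{w_\sort}$, and applying the inductive hypothesis to $w_\sort <_{\mathrm{os}} w$ closes this case.

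If $w$ is sorted, I would apply Theorem \ref{thm:sortedorthodontia} to extract the first $\beta$ operators from $\mathscr{G}_w$: parts (i)--(iv) say $\bm{i}$ starts $(i_1,i_1-1,\ldots,\alpha+1)$ with trivial $\bm{m}$ entries and trivial $\bm{k}$ entries in $[\alpha+1,i_1]$, and part (v) identifies the tail of the orthodontic expression with (a shifted version of) $\mathscr{G}_{ws_{i_1}\cdots s_{\alpha+1}}$. After commuting the operators $\overline{\pi}_{\alpha+1},\ldots,\overline{\pi}_{i_1}$ past the fixed fundamental weights $\omega_1,\ldots,\omega_\alpha,\omega_{i_1+1},\ldots,\omega_n$, one gets the clean identity
\[
\mathscr{G}_w = \overline{\pi}_{i_1}\cdots\overline{\pi}_{\alpha+1}\bigl(x_{\alpha+1}^{-\beta}\,\mathscr{G}_{ws_{i_1}\cdots s_{\alpha+1}}\bigr),
\]
and the inductive hypothesis replaces $\mathscr{G}_{ws_{i_1}\cdots s_{\alpha+1}}$ by $\mathfrak{G}_{ws_{i_1}\cdots s_{\alpha+1}}$. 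To conclude, it suffices to show that the right-hand side equals $\overline{\partial}_{i_1}\cdots\overline{\partial}_{\alpha+1}(\mathfrak{G}_{ws_{i_1}\cdots s_{\alpha+1}})$, which unwinds through the Grothendieck recursion (applied $\beta$ times along the chain of weak Bruhat covers) to $\mathfrak{G}_w$.

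The main obstacle is this last operator identity: converting a chain of Demazure--Lascoux operators $\overline{\pi}_{i_1}\cdots\overline{\pi}_{\alpha+1}$ applied to $x_{\alpha+1}^{-\beta}g$ into a chain of isobaric divided differences $\overline{\partial}_{i_1}\cdots\overline{\partial}_{\alpha+1}$ applied to $g$. The key structural input is that $g = \mathfrak{G}_{ws_{i_1}\cdots s_{\alpha+1}}$ has ascents at positions $\alpha+2,\ldots,i_1$ since $w$ is sorted, so by Proposition \ref{prop:repeatoperators} the polynomial $h = x_{\alpha+1}^{-\beta}g$ satisfies $\partial_k(h)=0$ and $\overline{\partial}_k(h)=h$ for $k\in[\alpha+2,i_1]$. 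The rest is purely computational: one isolates Leibniz-type expansions of $\overline{\partial}_j(x_j^\delta h)$, shows that the ``extra'' terms collapse after applying the subsequent $\overline{\partial}_{j+1},\ldots,\overline{\partial}_{j+\gamma-1}$ (because they act on polynomials with the required symmetry), and iterates. I would carry this out through a small ladder of lemmas: first an explicit expansion of $\overline{\partial}_j(x_j^\delta g)$ for symmetric $g$, next a lemma that $\overline{\partial}_{j+\gamma-1}\cdots\overline{\partial}_{j+1}(x_{j+1}^\delta g) = g$ under the symmetry hypothesis, then the preservation of the symmetry hypothesis under $\overline{\pi}_j$ (via commutation of $\overline{\partial}_k$ and $\overline{\partial}_j$ for $|k-j|>1$), and finally an induction on $\gamma$ combining these ingredients to yield $\overline{\partial}_{j+\gamma-1}\cdots\overline{\partial}_j(x_j^\gamma g) = \overline{\pi}_{j+\gamma-1}\cdots\overline{\pi}_j(g)$, applied with $j=\alpha+1$ and $\gamma=\beta$.
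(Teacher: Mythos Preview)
Your proposal is correct and follows essentially the same route as the paper: induction over the orthodontic sort order $\leq_{\mathrm{os}}$, the sorted/unsorted case split via Propositions \ref{prop:grothunsort}, \ref{prop:unsortedorthodontia} and Theorem \ref{thm:sortedorthodontia}, and the reduction of the sorted case to the operator identity $\overline{\partial}_{j+\gamma-1}\cdots\overline{\partial}_j(x_j^\gamma g)=\overline{\pi}_{j+\gamma-1}\cdots\overline{\pi}_j(g)$ under the symmetry hypotheses supplied by Proposition \ref{prop:repeatoperators}. Your ladder of computational lemmas matches Lemmas \ref{lem:computational-groth1}--\ref{lem:computational-groth5} in both content and order; the only cosmetic difference is that the paper states the expansion of $\overline{\partial}_j(x_j^\delta g)$ for arbitrary $g$ (not just symmetric $g$) and treats the case $\beta=1$ separately before invoking the $\gamma\geq 2$ lemma.
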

	
	By taking the lowest degree homogeneous component of both sides, we recover the orthodontia formula for Schubert polynomials.
	\begin{namedtheorem}[\ref{thm:magyaroperatortheorem}]\textup{(\cite[Proposition~15]{magyar})}
		Let $w\in S_n$ have orthodontic sequence 
		\[\bm{i}(w) = (i_1,\ldots, i_\l),\quad \bm{k}(w)=(k_1, \ldots, k_n), \mbox{ and}\quad \bm{m}(w) = (m_1, \ldots, m_\l).\]
		Then 
		\[\mathfrak{S}_w = \omega_1^{k_1}\cdots\omega_n^{k_n}\pi_{i_1}(\omega_{i_1}^{m_1}\pi_{i_2}(\omega_{i_2}^{m_2}\cdots\pi_{i_\l}(\omega_{i_\l}^{m_\l})\cdots)).
		\]
	\end{namedtheorem}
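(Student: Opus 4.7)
The plan is to derive Magyar's formula directly as a consequence of Theorem~\ref{thm:groth-for-intro} by extracting the lowest-degree homogeneous component of both sides. Recall that $\mathfrak{S}_w$ is precisely the lowest-degree homogeneous component of $\mathfrak{G}_w$, so it suffices to show that the lowest-degree component of the nested $\overline{\pi}$-expression equals the nested $\pi$-expression.

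The key computational observation is the comparison
\[
\overline{\pi}_j(f) = \partial_j(x_j f) - \partial_j(x_j x_{j+1} f) = \pi_j(f) - \partial_j(x_j x_{j+1} f).
\]
If $f$ is homogeneous of degree $d$, then $\pi_j(f)$ is homogeneous of degree $d$ while $\partial_j(x_j x_{j+1} f)$ is homogeneous of degree $d+1$ (since $\partial_j$ lowers degree by one when applied to a homogeneous polynomial). Consequently, the lowest-degree component of $\overline{\pi}_j(f)$ is exactly $\pi_j(f)$.

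From here I would argue by induction on the depth of nesting, starting from the innermost expression $\omega_{i_\l}^{m_\l}$, which is already homogeneous. At each stage, multiplication by a monomial $\omega_{i_k}^{m_k}$ shifts all homogeneous components uniformly, and the subsequent application of $\overline{\pi}_{i_k}$ contributes its degree-preserving $\pi_{i_k}$-part to the lowest-degree stratum and higher-degree corrections above it. A straightforward induction then shows that the lowest-degree component of
\[
\omega_{i_1}^{m_1}\overline{\pi}_{i_2}(\omega_{i_2}^{m_2}\cdots\overline{\pi}_{i_\l}(\omega_{i_\l}^{m_\l})\cdots)
\]
coincides with
\[
\omega_{i_1}^{m_1}\pi_{i_2}(\omega_{i_2}^{m_2}\cdots\pi_{i_\l}(\omega_{i_\l}^{m_\l})\cdots),
\]
and prepending $\omega_1^{k_1}\cdots\omega_n^{k_n}$ on the left again just shifts the degree filtration. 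Extracting the bottom component of Theorem~\ref{thm:groth-for-intro} therefore yields Magyar's formula.

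The only subtle point, and the one I would verify carefully, is the base-case-to-inductive-step transition: specifically that taking the lowest-degree component genuinely commutes with the operations involved, i.e.\ that no cancellation at the bottom degree can occur. This is guaranteed because $\pi_j$ preserves homogeneity degree, so the ``leading'' stratum produced at each step is $\pi_{i_k}$ applied to the leading stratum from the previous step, and the higher-degree corrections cannot contribute down into that stratum. Once this is noted, the argument is immediate from Theorem~\ref{thm:groth-for-intro}.
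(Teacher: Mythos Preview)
Your proposal is correct and matches the paper's approach exactly: the paper derives this theorem from Theorem~\ref{thm:groth-for-intro} in a single sentence, ``By taking the lowest degree homogeneous component of both sides, we recover the orthodontia formula for Schubert polynomials.'' Your argument simply spells out this extraction in detail.
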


	\section{Application to the Degree and Support of Grothendieck Polynomials}
	\label{sec:application}
	
	In this section we present some consequences of Theorem \ref{thm:groth-for-intro} for the degree and support of Grothendieck polynomials. Little is known regarding the support of Grothendieck polynomials in general, though special cases have been addressed in \cite{genpermflows,symmgrothnewton}. Conjectures such as \cite[Conjecture 5.5]{MTY}, \cite[Conjecture 5.1]{genpermflows}, and \cite[Conjecture 22]{schlorentzian} currently remain open.
	
	There has been recent interest in combinatorial formulas for the degree of a Grothendieck polynomial, in part due to its connection to the Castelnuovo--Mumford regularity of certain varieties \cite{symmgrothdeg}. Theorem \ref{thm:groth-for-intro} immediately yields the following combinatorial upper bound for the degree.

	\begin{proposition}
		\label{prop:orthodegbound}
		Let $w\in S_n$ and $\bm{i}(w)=(i_1,\ldots,i_\l)$. Then
		\[\deg \mathfrak{G}_w\leq \deg \mathfrak{S}_w+\l \]
	\end{proposition}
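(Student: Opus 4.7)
The plan is to compute both degrees directly from the two orthodontia formulas, using the fact that the operators $\pi_j$ and $\overline{\pi}_j$ have well-understood degree behavior. Specifically: (a) $\overline{\pi}_j(f) = \partial_j(x_j(1-x_{j+1})f)$ raises degree by at most $1$, since $x_j(1-x_{j+1})$ raises degree by at most $2$ and $\partial_j$ drops degree by $1$; and (b) $\pi_j(f) = \partial_j(x_j f)$ applied to a homogeneous polynomial is either zero or homogeneous of the same degree, since $\partial_j$ sends a homogeneous polynomial to a homogeneous polynomial of one lower degree (or to zero).

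Working inside out in Theorem~\ref{thm:groth-for-intro}, each of the $\l$ applications of $\overline{\pi}_{i_j}$ adds at most $1$ to the degree, each multiplication by $\omega_{i_j}^{m_j}$ adds $i_j m_j$, and the outer multiplication by $\omega_1^{k_1}\cdots\omega_n^{k_n}$ adds $\sum_{i=1}^n i\,k_i$. This gives
\[
\deg \mathfrak{G}_w \;\leq\; \sum_{i=1}^n i\,k_i + \sum_{j=1}^\l i_j m_j + \l.
\]

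Running the analogous computation for $\mathfrak{S}_w$ via Theorem~\ref{thm:magyaroperatortheorem}, the $\pi_{i_j}$'s preserve homogeneous degree (or kill the argument), so the inner expression $\pi_{i_1}(\omega_{i_1}^{m_1}\pi_{i_2}(\cdots))$ is either zero or homogeneous of degree exactly $\sum_{j=1}^\l i_j m_j$. Since $\mathfrak{S}_w$ is nonzero and $\omega_1^{k_1}\cdots\omega_n^{k_n}$ is a nonzero monomial, the inner expression must be nonzero, yielding
\[
\deg \mathfrak{S}_w \;=\; \sum_{i=1}^n i\,k_i + \sum_{j=1}^\l i_j m_j.
\]
Combining the two estimates produces $\deg \mathfrak{G}_w \leq \deg \mathfrak{S}_w + \l$.

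The only mild subtlety in this plan is upgrading the Schubert estimate from an inequality to an equality: without it, we would only know that both quantities are bounded by the same number, which does not imply the desired inequality. This upgrade depends on the fact that $\pi_j$ does not strictly decrease the degree of a homogeneous polynomial (it either preserves it or annihilates it), together with the nonvanishing of $\mathfrak{S}_w$.
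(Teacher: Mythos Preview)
Your proof is correct and follows essentially the same approach as the paper: the paper's one-line argument simply observes that each $\overline{\pi}_j$ in Theorem~\ref{thm:groth-for-intro} raises degree by at most one, leaving implicit the comparison with the $\pi_j$'s in Theorem~\ref{thm:magyaroperatortheorem}. You have made that comparison explicit, including the (necessary) verification that $\deg\mathfrak{S}_w$ equals the combinatorial quantity $\sum_i ik_i+\sum_j i_jm_j$ exactly rather than merely being bounded by it.
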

	\begin{proof}
		Each operator $\overline{\pi}_j$ in Theorem \ref{thm:groth-for-intro} increases the degree of its input by at most one.
	\end{proof}

	We propose a possible refinement of Proposition \ref{prop:orthodegbound} in Conjecture \ref{conj:refineddegbound}. We now work towards proving Theorem \ref{thm:upwardsdivisibility-intro}, a new divisibility restriction on the monomials that can appear in a Grothendieck polynomial. We then deduce Corollary \ref{cor:upperclosurebound}, another combinatorial upper bound on the degree of a Grothendieck polynomial.
	
	For any diagram $D$, denote by $\overline{D}$ the \emph{upper closure} of $D$, the diagram 
	\[\overline{D} = \{(i,j)\mid j=j' \mbox{ and } i\leq i' \mbox{ for some } (i',j')\in D   \}. \]
	Denote by $\bm{x}^D$ the monomial
	\[\bm{x}^D = \prod_{(i,j)\in D} x_i = \prod_{j=1}^n\prod_{i\in D_j}x_i, \]
	where $D_1,\ldots,D_n$ are the columns of $D$.
	\begin{lemma}
		\label{lem:exponentchange}
		Let $w\in S_n$ be a nonidentity sorted permutation with primary column data $(h,C,\alpha,i_1,\beta)$. 
		Let \[\gamma=\#\{j\in [h+1,n] \mid \max(D(w)_j)=i_1+1 \} \qquad(\mbox{\textup{taking} } \max(\emptyset)\coloneqq 0).\]
		The diagrams $\overline{D(w)}$ and $\overline{D(ws_{i_1}\cdots s_{\alpha + 1})}$ are related via the equation
		\[\bm{x}^{\overline{D(w)}} = \bm{x}^{\overline{D(ws_{i_1}\cdots s_{\alpha + 1})}}x_{\alpha+1}^{-\beta}x_{\alpha+2}^{\gamma}x_{\alpha+3}^{\gamma}\cdots x_{i_1+1}^{\gamma}.  \]
		Moreover, if $c_j$ denotes the exponent of $x_j$ appearing in $\bm{x}^{\overline{D(ws_{i_1}\cdots s_{\alpha + 1})}}x_{\alpha+1}^{-\beta}$, then
		\[c_{\alpha+1} = c_{\alpha+2}+\gamma=\cdots = c_{i_1+1}+\gamma. \]
	\end{lemma}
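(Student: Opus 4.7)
The plan is to combine the structural description of $D(w')$---where $w' := ws_{i_1}\cdots s_{\alpha + 1}$---from the proof of Theorem~\ref{thm:sortedorthodontia}\,(v) with a careful column-by-column comparison of upper closures. Recall from that proof that the columns of $D(w')$ fall into three regions: columns $1, \ldots, h-\beta$ agree with those of $D(w)$ (standard intervals strictly containing $[\alpha]$); columns $h-\beta+1, \ldots, h$ each equal $[\alpha+1]$ (they were $[\alpha]$ in $D(w)$); and columns $h+1, \ldots, n$ equal $\sigma(D(w)_j)$, where $\sigma := s_{\alpha+1} \cdots s_{i_1}$ acts on row indices as the cycle $(\alpha+1, \alpha+2, \ldots, i_1+1)$.

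First I would use Lemma~\ref{lem:bij} to establish the key input: for $j \geq h+1$, $D(w)_j \cap [\alpha+1, i_1+1] \subseteq \{i_1+1\}$, because $w(i) \leq h$ for $i \in [\alpha+1, i_1]$ forces $j \leq h-1$ whenever $(i,j) \in D(w)$ with $i$ in this range. Consequently $\sigma$ acts on each column $D(w)_j$ with $j \geq h+1$ either trivially or by sending $i_1+1 \mapsto \alpha+1$ and fixing every other element; thus $\max \sigma(D(w)_j) = \max D(w)_j$ unless $\max D(w)_j = i_1+1$, in which case $\max \sigma(D(w)_j) = \alpha+1$.

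For the first identity I would compute the ratio $\bm{x}^{\overline{D(w')}}/\bm{x}^{\overline{D(w)}}$ column-by-column. The left region contributes ratio $1$; each of the $\beta$ middle columns contributes $x_{\alpha+1}$ (upper closure grows from $[\alpha]$ to $[\alpha+1]$); and among the right columns, only the $\gamma$ with $\max D(w)_j = i_1+1$ are nontrivial, each contributing $(x_{\alpha+2} \cdots x_{i_1+1})^{-1}$. Multiplying and rearranging yields the first claim.

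For the second identity, $c_k$ equals $\#\{j : \max D(w')_j \geq k\}$ for $k \neq \alpha+1$ and is $\beta$ less when $k = \alpha+1$. I would count columns of $D(w')$ by maximum: the middle region contributes $\beta$ columns with max $= \alpha+1$; the right region contributes $\gamma$ columns with max $= \alpha+1$ and no columns with max in $[\alpha+2, i_1+1]$ (by the $\sigma$-analysis above). The main obstacle is showing that no left-region column has max in $[\alpha+1, i_1]$; I would handle this by combining the fact that these columns strictly contain $[\alpha]$ (so $w^{-1}(j) > \alpha+1$) with Lemma~\ref{lem:bij} (so $w^{-1}(j) \notin [\alpha+1, i_1]$ when $j \leq h-\beta$, hence $w^{-1}(j) \geq i_1+1$ and $\max D(w)_j \geq i_1$), then ruling out equality at $i_1$ via $w(i_1+1) \geq h+2 > j$, which forces $(i_1+1, j) \in D(w)$ and so $\max D(w)_j \geq i_1+1$. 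Putting the counts together gives $\#\{j : \max D(w')_j = \alpha+1\} = \beta+\gamma$ and $\#\{j : \max D(w')_j = k\} = 0$ for $k \in [\alpha+2, i_1]$, which is exactly the claimed chain $c_{\alpha+1} = c_{\alpha+2}+\gamma = \cdots = c_{i_1+1}+\gamma$.
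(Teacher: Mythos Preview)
Your argument is correct and, for the first identity, follows exactly the paper's route: split the columns of $D(w)$ and $D(w')$ into the three regions coming from Theorem~\ref{thm:sortedorthodontia}(v), then compare upper closures column by column.

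For the second identity, your argument works but is more laborious than necessary. The paper observes instead that, since $w$ is sorted, no column of $D(w)$ has maximum in $[\alpha+1,i_1]$ (left columns are standard intervals avoiding this range by Theorem~\ref{thm:sortedorthodontia}(iii); middle columns have maximum $\alpha$; right columns have empty intersection with rows $[\alpha+1,i_1]$). Hence the exponents of $x_{\alpha+1},\ldots,x_{i_1+1}$ in $\bm{x}^{\overline{D(w)}}$ are all equal, and the chain $c_{\alpha+1}=c_{\alpha+2}+\gamma=\cdots=c_{i_1+1}+\gamma$ drops out immediately from the first identity already proved. Your direct count on $D(w')$ is essentially the same observation transported along $\sigma$, and your left-region analysis via Lemma~\ref{lem:bij} and the explicit check that $(i_1+1,j)\in D(w)$ amounts to re-deriving Theorem~\ref{thm:sortedorthodontia}(iii) from scratch; citing that result and working on the $D(w)$ side as the paper does would shorten your proof considerably.
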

	\begin{proof}
		The argument is similar to that of Theorem \ref{thm:sortedorthodontia}. Denote $D(ws_{i_1}\cdots s_{\alpha + 1})$ by $D'$ for compactness. Let $D(w)$ have columns $D_1,\ldots,D_n$ and $D'$ have columns $D_1',\ldots,D_n'$. Then $\overline{D(w)}$ has columns $\overline{D_1},\ldots,\overline{D_n}$, and similarly for $D'$. 
		
		Since $w$ is sorted, the columns $D_{h-\beta+1},\ldots, D_{h}$ all equal $[\alpha]$, while the columns $D'_{h-\beta+1},\ldots, D'_{h}$ all equal $[\alpha+1]$. For $j\in [h-\beta]$, we have $D_j=D_j'$. The diagram $(D_{h+1}',\ldots,D_n')$ is obtained from the diagram $(D_{h+1},\ldots, D_n)$ by permuting the rows by $s_{i_1}\cdots s_{\alpha+1}$. For $j\geq h+1$, we have  $\overline{D_j}= \overline{D_j'}$ unless $\max(D_j)=i_1+1$. In this case, $\overline{D_j}=[i_1+1]$ and $\overline{D_j'}=[\alpha+1]$. There are exactly $\gamma$ such columns among $D_{h+1},\ldots,D_n$.
		
		We conclude
		\[\bm{x}^{\overline{D(w)}} = \bm{x}^{\overline{D'}}x_{\alpha+1}^{-\beta}x_{\alpha+2}^{\gamma}x_{\alpha+3}^{\gamma}\cdots x_{i_1+1}^{\gamma}.\]
		Since $w$ is sorted, the exponents of $x_{\alpha+1},\ldots,x_{i_1+1}$ in $\bm{x}^{\overline{D(w)}}$ are all equal. The last statement of the lemma follows immediately.
	\end{proof}
	
	\begin{example}
		Consider $w=923854761$, so $w$ is sorted with primary column data $h=3$, $C=\{1,4,5\}$, $\alpha=1$, $i_1=3$, and $\beta=2$. 
		The diagrams of $w$ and $ws_3s_2$ are 
		\begin{center}
			\includegraphics[scale=.8]{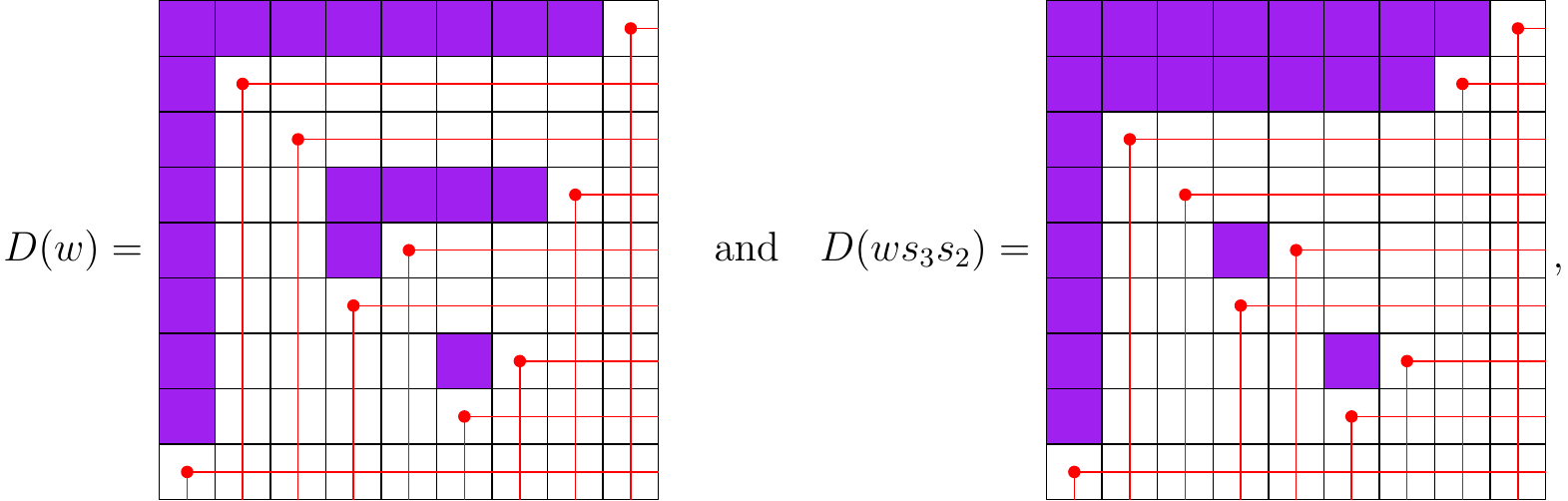}
		\end{center}
		with upper closures
		\begin{center}
			\includegraphics[scale=.8]{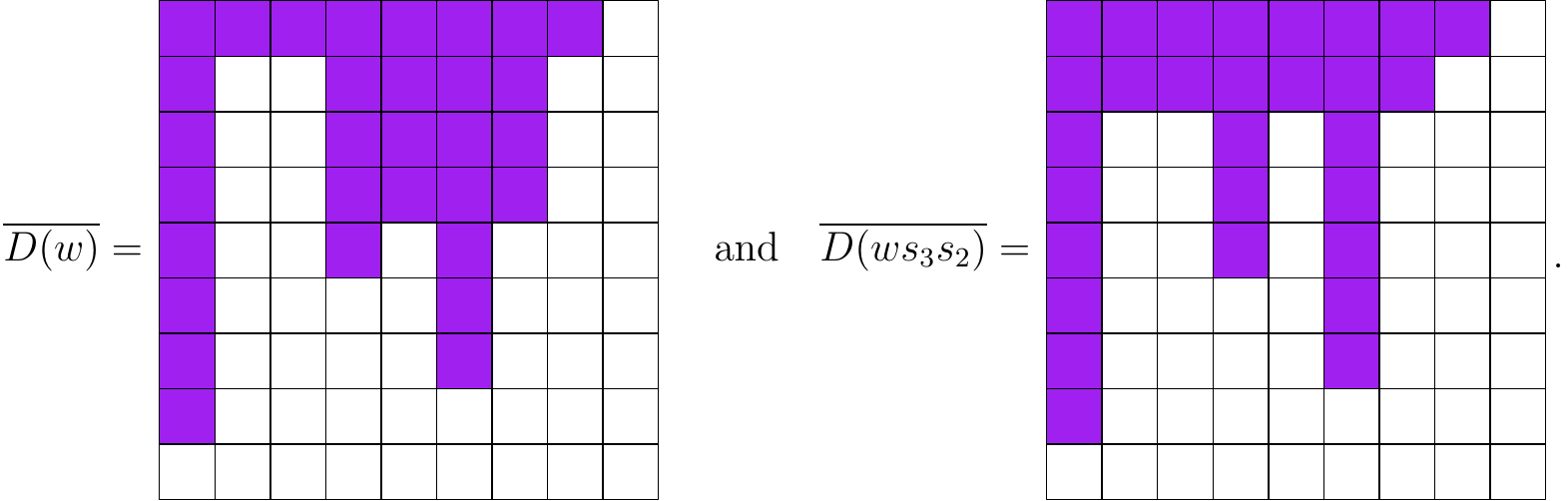}
		\end{center}
		Clearly,
		\[
			\bm{x}^{\overline{D(ws_{i_1}\cdots s_{\alpha + 1})}} = x_1^7x_2^7x_3^3x_4^3x_5^3x_6^2x_7^2x_8 \quad\mbox{and}\quad \bm{x}^{\overline{D(w)}} =x_1^8x_2^5x_3^5x_4^5x_5^3x_6^2x_7^2x_8 = x_2^{-2}x_3^2x_4^2\bm{x}^{\overline{D(ws_{i_1}\cdots s_{\alpha + 1})}}.
		\]
	\end{example}

	\begin{lemma}
		\label{lem:pijdivisibility}
		Let $X=x_1^{d_1}\cdots x_n^{d_n}$ be any monomial. Each monomial appearing in $\overline{\pi}_j(X)$ divides
		\[x_1^{d_1}\cdots x_{j-1}^{d_{j-1}} x_j^{\max(d_j,d_{j+1})}x_{j+1}^{\max(d_j,d_{j+1})} x_{j+1}^{d_{j+1}}\cdots x_n^{d_n}. \]
	\end{lemma}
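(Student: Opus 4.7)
The plan is to reduce the statement to a direct two-variable computation by peeling off the variables other than $x_j$ and $x_{j+1}$. Set $Y = \prod_{k \neq j, j+1} x_k^{d_k}$, so that $X = Y \cdot x_j^{d_j} x_{j+1}^{d_{j+1}}$. Since $Y$ involves neither $x_j$ nor $x_{j+1}$, it is symmetric in them, so $\partial_j(Y \cdot f) = Y \partial_j(f)$ for every polynomial $f$. Consequently
\[
\overline{\pi}_j(X) = \partial_j\bigl(x_j(1-x_{j+1}) \cdot Y \cdot x_j^{d_j} x_{j+1}^{d_{j+1}}\bigr) = Y \cdot \overline{\pi}_j\bigl(x_j^{d_j} x_{j+1}^{d_{j+1}}\bigr).
\]
It therefore suffices to prove the two-variable statement that every monomial of $\overline{\pi}_j(x_j^a x_{j+1}^b)$ divides $x_j^{\max(a,b)} x_{j+1}^{\max(a,b)}$, because multiplication by $Y$ then yields a divisor of the monomial stated in the lemma.

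For the two-variable assertion, I would expand
\[
\overline{\pi}_j(x_j^a x_{j+1}^b) = \partial_j\bigl(x_j^{a+1} x_{j+1}^b\bigr) - \partial_j\bigl(x_j^{a+1} x_{j+1}^{b+1}\bigr)
\]
and apply the standard identity
\[
\partial_j\bigl(x_j^p x_{j+1}^q\bigr) = \mathrm{sgn}(p-q)\, x_j^{\min(p,q)} x_{j+1}^{\min(p,q)} \sum_{r=0}^{|p-q|-1} x_j^{|p-q|-1-r} x_{j+1}^r,
\]
whose expansion is a signed sum of monomials $x_j^\alpha x_{j+1}^\beta$ with both exponents lying in $[\min(p,q),\, \max(p,q)-1]$. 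Specializing to $(p,q) = (a+1,b)$ gives monomials with exponents at most $\max(a+1,b)-1 = \max(a, b-1) \leq \max(a,b)$, and specializing to $(p,q) = (a+1,b+1)$ gives monomials with exponents at most $\max(a,b)$. Thus every monomial appearing in either term of the difference divides $x_j^{\max(a,b)} x_{j+1}^{\max(a,b)}$, and the same bound survives taking the difference.

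Combining the two steps gives the lemma. There is essentially no obstacle: once one observes that $Y$ can be factored through $\partial_j$, the rest is a mechanical expansion of divided differences and a careful comparison of the exponent ranges produced by the two terms. The only point that requires care is checking that both summands $\partial_j(x_j^{a+1} x_{j+1}^b)$ and $\partial_j(x_j^{a+1} x_{j+1}^{b+1})$ are controlled by the same divisor $x_j^{\max(a,b)} x_{j+1}^{\max(a,b)}$, which the exponent-range description above makes transparent.
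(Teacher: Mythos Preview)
Your proof is correct and follows essentially the same approach as the paper: factor out the variables other than $x_j,x_{j+1}$, then verify in the two-variable setting that every monomial of $\overline{\pi}_j(x_j^a x_{j+1}^b)$ divides $x_j^{\max(a,b)}x_{j+1}^{\max(a,b)}$. The only cosmetic difference is that the paper handles the two-variable case by an explicit four-way case split on the relation between $a$ and $b$, whereas you use the closed-form expansion of $\partial_j(x_j^p x_{j+1}^q)$ to bound the exponent range uniformly.
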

	\begin{proof}
		We compute $\overline{\pi}_j(x_j^ax_{j+1}^b)$ in four separate cases over $(a,b)$.
		
		\noindent If $a>b$, then
		\[
			\overline{\pi}_j(x_j^ax_{j+1}^b)=\left(x_j^ax_{j+1}^b+x_j^{a-1}x_{j+1}^{b+1}+\cdots+x_j^bx_{j+1}^a\right)-\left(x_j^ax_{j+1}^{b+1}+x_j^{a-1}x_{j+1}^{b+2}+\cdots+x_j^{b+1}x_{j+1}^a\right).
		\]
		If $a<b-1$, then
		\[
		\overline{\pi}_j(x_j^ax_{j+1}^b)=\left(x_j^{a+1}x_{j+1}^{b}+x_j^{a+2}x_{j+1}^{b-1}+\cdots+x_j^{b}x_{j+1}^{a+1}\right)-\left(x_j^{a+1}x_{j+1}^{b-1}+x_j^{a+2}x_{j+1}^{b-2}+\cdots+x_j^{b-1}x_{j+1}^{a+1}\right).
		\]
		If $a=b$, then 
		\[
			\overline{\pi}_j(x_j^ax_{j+1}^b) =x_j^ax_{j+1}^b.
		\]
		If $a=b-1$, then 
		\[
			\overline{\pi}_j(x_j^ax_{j+1}^b) =x_j^{a+1}x_{j+1}^b.
		\]
		Note that in each case, all monomials occurring in $\overline{\pi}_j(x_j^ax_{j+1}^b)$ divide $x_j^{\max(a,b)}x_{j+1}^{\max(a,b)}$. 
		Since
		\[\overline{\pi}_j(X)=\biggr(\prod_{p\neq j,j+1} x_p^{d_p}\biggr)\overline{\pi}_j(x_j^{d_j}x_{j+1}^{d_{j+1}}),\]
		the lemma follows.
	\end{proof}

	\begin{namedtheorem}[\ref{thm:upwardsdivisibility-intro}]
		For any permutation $w\in S_n$, all monomials appearing in $\mathfrak{G}_w$ divide $\bm{x}^{\overline{D(w)}}$.
	\end{namedtheorem}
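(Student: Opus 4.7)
The plan is to proceed by induction on the orthodontic sort order $\leq_{\mathrm{os}}$, mirroring the structure of the proof of Theorem \ref{thm:groth-equality}. The base case $w=\mathrm{id}$ is trivial since $\mathfrak{G}_{\mathrm{id}}=1$ and $\overline{D(\mathrm{id})}=\emptyset$. For the inductive step, I would split into the unsorted and sorted cases according to the two defining relations $(\dagger)$ and $(\ddagger)$ of $\leq_{\mathrm{os}}$.

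In the unsorted case, Proposition \ref{prop:grothunsort} gives $\mathfrak{G}_w = x_{\alpha+1}^{\lambda_1}\cdots x_{i_1}^{\lambda_\beta}\,\mathfrak{G}_{w_\sort}$, and the inductive hypothesis implies every monomial of $\mathfrak{G}_{w_\sort}$ divides $\bm{x}^{\overline{D(w_\sort)}}$. It then suffices to verify the identity
\[\bm{x}^{\overline{D(w)}} \;=\; x_{\alpha+1}^{\lambda_1}\cdots x_{i_1}^{\lambda_\beta}\,\bm{x}^{\overline{D(w_\sort)}},\]
which follows from a column-by-column comparison: $D(w)$ is obtained from $D(w_\sort)$ by inserting the Young diagram of $\lambda$ inside $[\alpha+1,i_1]\times[h-\beta+1,h]$, so column $h-\beta+j$ of $D(w)$ has upper closure $[\alpha+\lambda'_j]$ in place of $[\alpha]$. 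Multiplying the column-wise contributions and using the conjugate-partition identity $\#\{j:\lambda'_j\geq k\}=\lambda_k$ produces exactly the claimed factor.

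In the sorted case, I would use the identity
\[\mathfrak{G}_w \;=\; \overline{\pi}_{i_1}\overline{\pi}_{i_1-1}\cdots\overline{\pi}_{\alpha+1}\bigl(x_{\alpha+1}^{-\beta}\,\mathfrak{G}_{ws_{i_1}\cdots s_{\alpha+1}}\bigr)\]
extracted from the proof of Theorem \ref{thm:groth-equality}. First I would check that $g := x_{\alpha+1}^{-\beta}\mathfrak{G}_{ws_{i_1}\cdots s_{\alpha+1}}$ is a genuine polynomial: Theorem \ref{thm:sortedorthodontia}\,\textup{(v)} gives $k'_{\alpha+1}=\beta+m_\beta$, so applying Theorem \ref{thm:groth-for-intro} to $ws_{i_1}\cdots s_{\alpha+1}$ exhibits $\omega_{\alpha+1}^{\beta+m_\beta}$ as a factor of $\mathfrak{G}_{ws_{i_1}\cdots s_{\alpha+1}}$, in particular yielding divisibility by $x_{\alpha+1}^\beta$. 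By induction, every monomial of $g$ divides $M := x_{\alpha+1}^{-\beta}\bm{x}^{\overline{D(ws_{i_1}\cdots s_{\alpha+1})}}$. Writing $c_j$ for the exponent of $x_j$ in $M$, I would apply Lemma \ref{lem:pijdivisibility} successively: after $\overline{\pi}_{\alpha+1}$ the exponents at positions $\alpha+1,\alpha+2$ are bounded by $\max(c_{\alpha+1},c_{\alpha+2})=c_{\alpha+1}$ thanks to the chain $c_{\alpha+1}=c_{\alpha+2}+\gamma=\cdots=c_{i_1+1}+\gamma$ of Lemma \ref{lem:exponentchange}; after $\overline{\pi}_{\alpha+2}$ the bound $c_{\alpha+1}$ propagates to position $\alpha+3$, and so on. Once $\overline{\pi}_{i_1}$ has been applied, the exponents at positions $\alpha+1,\ldots,i_1+1$ are all bounded by $c_{\alpha+1}$ while the remaining exponents remain bounded by their $c_j$'s; Lemma \ref{lem:exponentchange} identifies this resulting bound with $\bm{x}^{\overline{D(w)}}$, closing the induction.

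The main obstacle is the bookkeeping in the sorted case: showing that the iterated $\max$ operation from Lemma \ref{lem:pijdivisibility} produces precisely the exponent profile of $\bm{x}^{\overline{D(w)}}$ recorded in Lemma \ref{lem:exponentchange}. The crucial combinatorial input is the uniformity in the chain $c_{\alpha+1}=c_{\alpha+2}+\gamma=\cdots=c_{i_1+1}+\gamma$: since $c_{\alpha+1}$ dominates $c_j$ for every $j\in[\alpha+2,i_1+1]$, the bound at position $\alpha+1$ propagates stably rightwards through the successive $\overline{\pi}_j$ applications, producing the correct equal-exponent block and matching the target monomial exactly.
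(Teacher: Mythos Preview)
Your proposal is correct and follows essentially the same approach as the paper: induction on $\leq_{\mathrm{os}}$, splitting into the unsorted case (handled via Proposition~\ref{prop:grothunsort} and the diagram comparison between $D(w)$ and $D(w_\sort)$) and the sorted case (handled via the identity $\mathfrak{G}_w=\overline{\pi}_{i_1}\cdots\overline{\pi}_{\alpha+1}(x_{\alpha+1}^{-\beta}\mathfrak{G}_{ws_{i_1}\cdots s_{\alpha+1}})$ together with Lemmas~\ref{lem:exponentchange} and~\ref{lem:pijdivisibility}). Your write-up is slightly more explicit in two places---you spell out the conjugate-partition identity in the unsorted case and you verify that $x_{\alpha+1}^{-\beta}\mathfrak{G}_{ws_{i_1}\cdots s_{\alpha+1}}$ is a genuine polynomial---but the logical structure is the same.
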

	\begin{proof}
		Begin by extending the orthodontic sort order $\leq_{\mathrm{os}}$ to a linear order $L$ on $S_n$, viewed as a bijection $L:[n!]\to S_n$. We prove the theorem by induction on $j$. For the base case $j=1$, Proposition \ref{prop:partialorder} implies $L(1)$ is the identity permutation and the theorem is trivial. 
		
		Assume that the theorem holds for all $j'<j$. Set $w=L(j)$, and let $(h,C,\alpha,i_1,\beta)$ be the primary column data of $w$. If $w$ is not sorted, then the defining relation (\ref{eqn:daggerx1}) of $\leq_{\mathrm{os}}$ shows that $w_{\sort} = L(j')$ for some $j'<j$. The theorem then follows immediately from Propositions \ref{prop:grothunsort} and \ref{prop:unsortedorthodontia}. 
		
		Suppose instead that $w$ is sorted. By Theorem \ref{thm:groth-for-intro} and Theorem \ref{thm:sortedorthodontia},
		\begin{align*}
		\mathfrak{G}_{w} = \overline{\pi}_{i_1}(\overline{\pi}_{i_1 - 1}(\cdots\overline{\pi}_{\alpha+1}(x_{\alpha+1}^{-\beta} \mathfrak{G}_{ws_{i_1}\cdots s_{\alpha+1}})\cdots)).
		\end{align*}
		The defining relation (\ref{eqn:daggerx2}) of $\leq_{\mathrm{os}}$ shows that $ws_{i_1}\cdots s_{\alpha+1}= L(j')$ for some $j'<j$. Thus, any monomial appearing in $x_{\alpha+1}^{-\beta}\mathfrak{G}_{ws_{i_1}\cdots s_{\alpha+1}}$ divides $x_{\alpha+1}^{-\beta}\bm{x}^{\overline{D(ws_{i_1}\cdots s_{\alpha+1})}}$. 
		
		Let $X'$ be any monomial appearing in $\mathfrak{G}_{w}$. Suppose $X'$ appears in $\overline{\pi}_{i_1}\cdots \overline{\pi}_{\alpha+1}(X)$, where $X=x_1^{d_1}\cdots x_n^{d_n}$ appears in $x_{\alpha+1}^{-\beta}\mathfrak{G}_{ws_{i_1}\cdots s_{\alpha+1}}$. Repeated application of Lemma \ref{lem:pijdivisibility} implies that $X'$ divides 
		\[x_1^{d_1}\cdots x_{\alpha}^{d_\alpha} x_{\alpha+1}^{M_{\alpha+1}}\cdots x_{i_1+1}^{M_{i_1+1}}x_{i_1+2}^{d_{i_1+2}}\cdots x_n^{d_n}, \]
		where each $M_p$ satisfies $M_p\leq \max(d_{\alpha+1},\ldots, d_{i_1+1})$.
		
		Let $c_p$ denote the exponent of $x_p$ appearing in $\bm{x}^{\overline{D(ws_{i_1}\cdots s_{\alpha + 1})}}x_{\alpha+1}^{-\beta}$ for each $p$. Since $X$ appears in $x_{\alpha+1}^{-\beta}\mathfrak{G}_{ws_{i_1}\cdots s_{\alpha+1}}$, it follows that $d_p\leq c_p$ for each $p$. Hence Lemma \ref{lem:exponentchange} implies
		\[\max(d_{\alpha+1},\ldots, d_{i_1+1})\leq \max(c_{\alpha+1},\ldots, c_{i_1+1})=c_{\alpha+1} \]
		since $c_{\alpha+1}=c_p+\gamma$ for any $p\in [\alpha+2,i_1+1]$ (with $\gamma$ as defined in Lemma \ref{lem:exponentchange}). Thus, $X'$ divides the monomial 
		\[x_1^{d_1}\cdots x_{\alpha}^{d_\alpha} x_{\alpha+1}^{M_{\alpha+1}}\cdots x_{i_1+1}^{M_{i_1+1}}x_{i_1+2}^{d_{i_1+2}}\cdots x_n^{d_n},\]
		which in turn divides the monomial 
		\begin{align*}
			x_1^{d_1}\cdots x_{\alpha}^{d_\alpha} x_{\alpha+1}^{c_{\alpha+1}}x_{\alpha+2}^{c_{\alpha+2}+\gamma}\cdots x_{i_1+1}^{c_{i_1+1}+\gamma}x_{i_1+2}^{d_{i_1+2}}\cdots x_n^{d_n} 
			&=\bm{x}^{\overline{D(ws_{i_1}\cdots s_{\alpha + 1})}}x_{\alpha+1}^{-\beta}x_{\alpha+2}^{\gamma}x_{\alpha+3}^{\gamma}\cdots x_{i_1+1}^{\gamma}\\
			&= \bm{x}^{\overline{D(w)}}.
		\end{align*}
		Hence, we have shown that $X'$ divides $\bm{x}^{\overline{D(w)}}$ as needed.
	\end{proof}

	\begin{corollary}
		\label{cor:upperclosurebound}
		For any $w\in S_n$, 
		\[\deg \mathfrak{G}_w\leq \#\overline{D(w)}. \]
	\end{corollary}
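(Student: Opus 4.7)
The plan is to deduce this corollary directly from Theorem~\ref{thm:upwardsdivisibility-intro}, which asserts that every monomial appearing in $\mathfrak{G}_w$ divides $\bm{x}^{\overline{D(w)}}$. This divisibility is the substantive input; once it is in hand, the degree bound is an immediate consequence of comparing total degrees.

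Concretely, I would argue as follows. The degree $\deg \mathfrak{G}_w$ equals the maximum total degree of a monomial appearing with nonzero coefficient in $\mathfrak{G}_w$. By Theorem~\ref{thm:upwardsdivisibility-intro}, any such monomial $X'$ divides $\bm{x}^{\overline{D(w)}}$, so $\deg X' \leq \deg \bm{x}^{\overline{D(w)}}$. Unwinding the definition
\[
\bm{x}^{\overline{D(w)}} = \prod_{(i,j)\in \overline{D(w)}} x_i,
\]
the degree of this monomial is exactly the number of boxes $\#\overline{D(w)}$. Taking the maximum over all monomials in $\mathfrak{G}_w$ gives the stated inequality.

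There is essentially no obstacle: the entire content has already been extracted in Theorem~\ref{thm:upwardsdivisibility-intro}, and the remaining step is the trivial observation that divisibility of monomials refines the usual degree partial order. The proof should therefore fit in a line or two, with the only care being to note explicitly that $\deg \bm{x}^{\overline{D(w)}} = \#\overline{D(w)}$.
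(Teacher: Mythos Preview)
Your proposal is correct and matches the paper's approach: the corollary is stated immediately after Theorem~\ref{thm:upwardsdivisibility-intro} with no separate proof, so the intended argument is exactly the one-line deduction you give, namely that divisibility by $\bm{x}^{\overline{D(w)}}$ bounds the degree of every monomial of $\mathfrak{G}_w$ by $\deg \bm{x}^{\overline{D(w)}} = \#\overline{D(w)}$.
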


	\begin{example}
		Let $w=14532$, so 
		\begin{center}
			\includegraphics[scale=1]{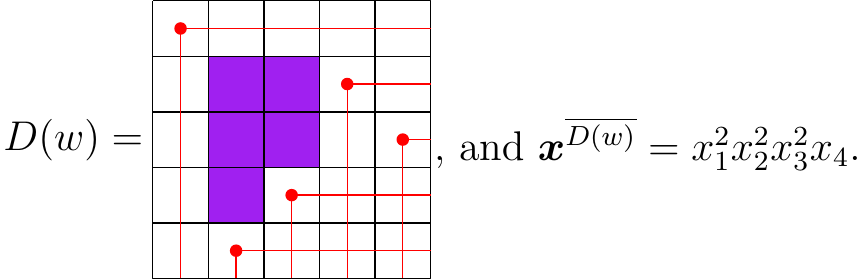}
		\end{center}
		Direct computation shows 
		\begin{align*}
			\begin{split}
				\mathfrak{G}_w=x_1^2x_2 x_3^2 + x_1^2x_2^2 x_3+x_1^2x_2^2 x_4+x_1^2x_3^2 x_4 +x_1^2x_2 x_3 x_4 +x_1x_2^2 x_3^2 +x_1x_2 x_3^2 x_4 +x_1x_2^2 x_3 x_4 +x_2^2 x_3^2 x_4\\
				-2 x_1^2x_2^2 x_3^2-3 x_1 x_2^2 x_3^2 x_4 -3 x_1^2 x_2^2 x_3 x_4 -3 x_1^2 x_2 x_3^2 x_4 +3 x_1^2 x_2^2 x_3^2 x_4.
			\end{split}
		\end{align*}
	\end{example}

	We conclude this section with a conjectural refinement of Proposition \ref{prop:orthodegbound} in the spirit of Theorem \ref{thm:upwardsdivisibility-intro}.
	\begin{definition}
		Let $w\in S_n$ and suppose $D(w)$ has columns $D_1,\ldots,D_n$. Taking $\max(\emptyset)\coloneqq0$, define $\theta(w)=(\theta_1,\ldots,\theta_n)$, where
		\[\theta_j=\#\{p\in[n]\mid j\leq \max(D_p) \} \mbox{ for each }j\in [n].\]
		For $\bm{i}(w)=(i_1,\ldots,i_\l)$, define $\xi(w)=(\xi_1,\ldots,\xi_n)$, where 
		\[\xi_j = \#\{p\in [\l]\mid i_p=j \}\mbox{ for each }j\in [n]. \]
	\end{definition}
	
	\begin{conjecture}
		\label{conj:refineddegbound}
		For any permutation $w\in S_n$, all monomials appearing in $\mathfrak{G}_w$ divide $\bm{x}^{\theta(w)+\xi(w)}$.
	\end{conjecture}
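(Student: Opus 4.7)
The plan is to derive Conjecture~\ref{conj:refineddegbound} as an immediate corollary of Theorem~\ref{thm:upwardsdivisibility-intro}. The key observation is that $\bm{x}^{\theta(w)}$ and $\bm{x}^{\overline{D(w)}}$ are the same monomial: column $p$ of the upper closure $\overline{D(w)}$ is the interval $[\max(D_p)]$ (with $[0]=\emptyset$), so row $i$ contributes to $\overline{D(w)}$ in exactly those columns $p$ with $i\leq\max(D_p)$. Hence the exponent of $x_i$ in $\bm{x}^{\overline{D(w)}}$ equals $\#\{p\in[n]\mid i\leq\max(D_p)\}=\theta_i$, giving $\bm{x}^{\overline{D(w)}}=\bm{x}^{\theta(w)}$.

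With that identification, the conjecture is immediate. Each $\xi_j=\#\{p\in[\l]\mid i_p=j\}$ is a nonnegative integer, so $\theta(w)+\xi(w)\geq\theta(w)$ coordinatewise and $\bm{x}^{\theta(w)}$ divides $\bm{x}^{\theta(w)+\xi(w)}$. By Theorem~\ref{thm:upwardsdivisibility-intro}, any monomial of $\mathfrak{G}_w$ divides $\bm{x}^{\overline{D(w)}}=\bm{x}^{\theta(w)}$, and divisibility is transitive, so every such monomial also divides $\bm{x}^{\theta(w)+\xi(w)}$.

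The interesting caveat is that this argument shows the conjecture to be strictly weaker than Theorem~\ref{thm:upwardsdivisibility-intro}: the added $\xi(w)$ contributes only slack. An alternative plan that does not invoke Theorem~\ref{thm:upwardsdivisibility-intro} would be to induct along the orthodontic sort order $\leq_{\mathrm{os}}$, mirroring Section~\ref{sec:application}. For unsorted $w$, Proposition~\ref{prop:grothunsort} gives $\mathfrak{G}_w=x_{\alpha+1}^{\lambda_1}\cdots x_{i_1}^{\lambda_\beta}\mathfrak{G}_{w_\sort}$, while Proposition~\ref{prop:unsortedorthodontia} yields $\xi(w)=\xi(w_\sort)$ together with a precise description of how $\theta$ changes, so the extra $x$-factors are absorbed into the increase in $\theta$. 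For sorted $w$, the orthodontia recursion $\mathfrak{G}_w=\overline{\pi}_{i_1}\cdots\overline{\pi}_{\alpha+1}(x_{\alpha+1}^{-\beta}\mathfrak{G}_{ws_{i_1}\cdots s_{\alpha+1}})$ together with Lemmas~\ref{lem:pijdivisibility} and~\ref{lem:exponentchange} describes how each $\overline{\pi}_j$ grows monomial exponents; the $\beta$ new entries $i_1,\ldots,\alpha+1$ of $\bm{i}(w)$ supply precisely the extra $\xi$-budget needed to absorb those increases. The main bookkeeping obstacle along this inductive route is matching the $\overline{\pi}$-growth coordinate-by-coordinate against the change in $\theta+\xi$ under the sorted recursion, but this is essentially the calculation already carried out in the proof of Theorem~\ref{thm:upwardsdivisibility-intro}.
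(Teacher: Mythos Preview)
The paper offers no proof of this statement: it is presented as an open \emph{conjecture} (``We conclude this section with a conjectural refinement of Proposition~\ref{prop:orthodegbound}''). So there is no proof in the paper to compare your argument against.

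That said, your argument is formally correct as written. The identity $\bm{x}^{\theta(w)}=\bm{x}^{\overline{D(w)}}$ holds exactly for the reason you give: column $p$ of $\overline{D(w)}$ is $[\max(D_p)]$, so the exponent of $x_j$ in $\bm{x}^{\overline{D(w)}}$ is $\#\{p\mid j\le\max(D_p)\}=\theta_j$. Since $\xi(w)\ge 0$ componentwise, Theorem~\ref{thm:upwardsdivisibility-intro} then immediately implies the conjecture.

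However, this very triviality is a red flag. The authors explicitly describe Conjecture~\ref{conj:refineddegbound} as a \emph{refinement} of Proposition~\ref{prop:orthodegbound}, which gives the degree bound $\deg\mathfrak{S}_w+\ell=\#D(w)+\ell$. With $\theta$ as literally defined, the conjecture yields only the degree bound $\#\overline{D(w)}+\ell$, which is \emph{weaker} than both Proposition~\ref{prop:orthodegbound} and Corollary~\ref{cor:upperclosurebound}, and is trivially subsumed by Theorem~\ref{thm:upwardsdivisibility-intro}. This is almost certainly not what was intended. The most natural reading is that the definition of $\theta_j$ contains a typo and should instead count boxes of $D(w)$ in row $j$, i.e.\ $\theta_j=\#\{p\in[n]\mid j\in D_p\}$, so that $\bm{x}^{\theta(w)}=\bm{x}^{D(w)}$ and $\sum_j(\theta_j+\xi_j)=\#D(w)+\ell$, matching Proposition~\ref{prop:orthodegbound} exactly. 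Under that intended definition your deduction from Theorem~\ref{thm:upwardsdivisibility-intro} fails (since generally $\bm{x}^{D(w)}$ does not divide $\bm{x}^{\overline{D(w)}}$ in the relevant direction), and the statement becomes a genuine open problem. You should flag this discrepancy rather than claim a proof.
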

	
	\section{Strongly Separated Diagrams}
	\label{sec:stronglysep}
	In this final section, we briefly address the full generality in  which Magyar's formula applies. We define a general family of diagrams and explain how orthodontia assigns to each diagram a polynomial, similar to the case of Rothe diagrams.
	
	For $R,S\subseteq [n]$, we write $R\preccurlyeq S$ if $R$ is element-wise less than $S$. More precisely, $R\preccurlyeq S$ if $r\leq s$ for each $r\in R$ and each $s\in S$.
	
	\begin{definition}
		A diagram $D\subseteq [n]^2$ is \emph{strongly separated} if for every pair of columns $C,C'$ of $D$, either
		\[C\backslash C'\preccurlyeq C'\backslash C \quad \mbox{or}\quad C'\backslash C\preccurlyeq C\backslash C'. \]
	\end{definition}
	
	Whenever the columns $D_1,\ldots,D_n$ of a strongly separated diagram $D$ are ordered so that $D_i\backslash D_j \preccurlyeq D_j\backslash D_i$ whenever $i<j$, the \emph{orthodontic sequence}
	\[\bm{i}(D)=(i_1,\ldots,i_\l),\quad \bm{k}(D)=(k_1,\ldots,k_n),\quad \mbox{and}\quad \bm{m}(D)=(m_1,\ldots,m_\l)\] 
	is defined exactly as it was for Rothe diagrams of permutations (Definition \ref{def:ikmsequence}).
	
	\begin{example}
		Consider the diagrams $D$ and $D'$ given by
		\begin{center}
			\includegraphics[scale=1]{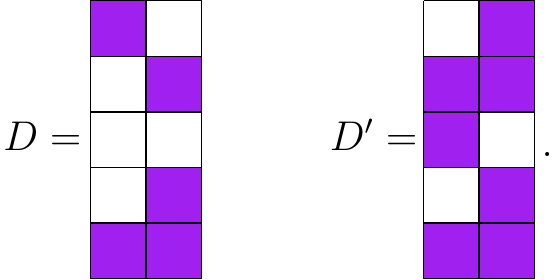}
		\end{center}
		The diagram $D$ is strongly separated, while $D'$ is not. Since the columns $D_1$ and $D_2$ of $D$ satisfy $D_1\backslash D_2\preccurlyeq D_2\backslash D_1$, the orthodontic sequence of $D$ is
		\[\bm{i}(D)=(4, 3, 2, 1, 2, 4, 3),\quad \bm{k}(D)=(0, 0, 0, 0, 0),\quad \mbox{and}\quad \bm{m}(D)=(0, 0, 1, 0, 0, 0, 1). \]
	\end{example}
	
	\begin{definition}
		Let $D\subseteq [n]^2$ be any strongly separated diagram with orthodontic sequence 
		\[\bm{i}(D) = (i_1,\ldots, i_\l),\quad \bm{k}(D)=(k_1, \ldots, k_n), \mbox{ and}\quad \bm{m}(D) = (m_1, \ldots, m_\l).\]
		Define 
		\[\mathscr{S}_D = \omega_1^{k_1}\cdots\omega_n^{k_n}{\pi}_{i_1}(\omega_{i_1}^{m_1}{\pi}_{i_2}(\omega_{i_2}^{m_2}\cdots{\pi}_{i_\l}(\omega_{i_\l}^{m_\l})\cdots))
		\]
		and
		\[\mathscr{G}_D = \omega_1^{k_1}\cdots\omega_n^{k_n}\overline{\pi}_{i_1}(\omega_{i_1}^{m_1}\overline{\pi}_{i_2}(\omega_{i_2}^{m_2}\cdots\overline{\pi}_{i_\l}(\omega_{i_\l}^{m_\l})\cdots)),
		\]
		where $\omega_i$ is the fundamental weight $\omega_i=x_1\cdots x_i$.
	\end{definition}
	
	Starting from the geometry of Bott--Samelson varieties, Magyar proves that the polynomials $\mathscr{S}_D$ are exactly the dual-characters of the flagged Weyl modules of strongly separated diagrams \cite[Corollary 13]{magyar}. In particular, $\mathscr{S}_D$ is a Schubert polynomial whenever $D$ is a Rothe diagram \cite{KP}, and a key polynomial whenever $D$ is left-aligned in each row \cite{keypolynomials}.
	
	What are the polynomials $\mathscr{G}_D$? Theorem \ref{thm:groth-for-intro} identifies them as the Grothendieck polynomials when $D$ is a Rothe diagram. It is easy to check that whenever $D$ is left-aligned, the polynomials $\mathscr{G}_D$ are the Lascoux polynomials \cite{grothtransition}, inhomogeneous analogues of the key polynomials. 
	
	Is there a K-theoretic analogue of the flagged Weyl module unifying these partial results?

	\section*{Acknowledgments} 
	We are grateful to Allen Knutson, Ricky Ini Liu, and Alex Yong for helpful discussions, and to Alex Fink for a careful reading.
	
	\bibliographystyle{plain}
	\bibliography{orthodontia-bibliography}

\begin{thebibliography}{10}

\bibitem{laddermoves}
N.~Bergeron and S.~Billey.
\newblock R{C}-graphs and {S}chubert polynomials.
\newblock {\em Experiment. Math.}, 2(4):257--269, 1993.

\bibitem{BJS}
S.~Billey, W.~Jockusch, and R.~P. Stanley.
\newblock Some combinatorial properties of {S}chubert polynomials.
\newblock {\em J. Algebraic Combin.}, 2(4):345--374, 1993.

\bibitem{grothlattice}
B.~Brubaker, C.~Frechette, A.~Hardt, E.~Tibor, and K.~Weber.
\newblock Frozen pipes: Lattice models for {G}rothendieck polynomials, 2020.
\newblock {\arxiv{2007.04310}}.

\bibitem{grothcoloredlattice}
V.~Buciumas and T.~Scrimshaw.
\newblock Double {G}rothendieck polynomials and colored lattice models, 2020.
\newblock {\arxiv{2007.04533}}.

\bibitem{keypolynomials}
M.~Demazure.
\newblock Une nouvelle formule des caracteres.
\newblock {\em J. Combin. Theory Ser. A}, 70(1):107--143, 1995.

\bibitem{symmgrothnewton}
L.~Escobar and A.~Yong.
\newblock Newton polytopes and symmetric {G}rothendieck polynomials.
\newblock {\em C. R. Math. Acad. Sci. Paris}, 355(8):831--834, 2017.

\bibitem{zeroone}
A.~Fink, K.~M\'esz\'aros, and A.~St. Dizier.
\newblock Zero-one {S}chubert polynomials.
\newblock {\em Math. Z.}, 2020.

\bibitem{balancedtableaux}
S.~Fomin, C.~Greene, V.~Reiner, and M.~Shimozono.
\newblock Balanced labellings and {S}chubert polynomials.
\newblock {\em European J. Combin}, 18:373--389, 1997.

\bibitem{FKschub}
S.~Fomin and A.~N. Kirillov.
\newblock The {Y}ang-{B}axter equation, symmetric functions, and {S}chubert
  polynomials.
\newblock {\em Discrete Math.}, 153(1):123--143, 1996.
\newblock Proceedings of the 5th Conference on Formal Power Series and
  Algebraic Combinatorics.

\bibitem{nilcoxeter}
S.~Fomin and R.~P. Stanley.
\newblock Schubert polynomials and the nil{C}oxeter algebra.
\newblock {\em Adv. in Math.}, 103(2):196 -- 207, 1994.

\bibitem{schlorentzian}
J.~Huh, J.~Matherne, K.~M\'esz\'aros, and A.~St. Dizier.
\newblock Logarithmic concavity of {S}chur and related polynomials, 2019.
\newblock {\arxiv{1906.09633}}.

\bibitem{multidegree}
A.~Knutson and E.~Miller.
\newblock Gr\"{o}bner geometry of {S}chubert polynomials.
\newblock {\em Ann. of Math. (2)}, 161(3):1245--1318, 2005.

\bibitem{KP}
W.~Kra\'skiewicz and P.~Pragacz.
\newblock Foncteurs de {S}chubert.
\newblock {\em C. R. Acad. Sci. Paris S\'er. I Math.}, 304(9):209--211, 1987.

\bibitem{thomas}
T.~Lam, S.~Lee, and M.~Shimozono.
\newblock Back stable {S}chubert calculus, Jun 2018.
\newblock {\arxiv{1806.11233}}.

\bibitem{grothtransition}
A.~Lascoux.
\newblock Transition on {G}rothendieck polynomials.
\newblock In {\em Physics and combinatorics, 2000 ({N}agoya)}, pages 164--179.
  World Sci. Publ., River Edge, NJ, 2001.

\bibitem{LS1}
A.~Lascoux and M.-P. Sch\"utzenberger.
\newblock Polyn\^omes de {S}chubert.
\newblock {\em C. R. Acad. Sci. Paris S\'er. I Math.}, 294(13):447--450, 1982.

\bibitem{LS2}
A.~Lascoux and M.-P Sch\"{u}tzenberger.
\newblock Structure de {H}opf de l'anneau de cohomologie et de l'anneau de
  {G}rothendieck d'une vari\'{e}t\'{e} de drapeaux.
\newblock {\em C. R. Acad. Sci. Paris S\'{e}r. I Math.}, 295(11):629--633,
  1982.

\bibitem{grothmonks}
C.~Lenart.
\newblock A {$K$}-theory version of {M}onk's formula and some related
  multiplication formulas.
\newblock {\em J. Pure Appl. Algebra}, 179(1-2):137--158, 2003.

\bibitem{lenart}
C.~Lenart.
\newblock A unified approach to combinatorial formulas for {S}chubert
  polynomials.
\newblock {\em J. Algebraic Combin.}, 20(3):263--299, 2004.

\bibitem{magyar}
P.~Magyar.
\newblock Schubert polynomials and {B}ott-{S}amelson varieties.
\newblock {\em Comment. Math. Helv.}, 73(4):603--636, 1998.

\bibitem{manivel}
L.~Manivel.
\newblock {\em Symmetric functions, {S}chubert polynomials and degeneracy
  loci}, volume~6 of {\em SMF/AMS Texts and Monographs}.
\newblock American Mathematical Society, Providence, RI; Soci\'et\'e
  Math\'ematique de France, Paris, 2001.
\newblock Translated from the 1998 French original by John R. Swallow, Cours
  Sp\'ecialis\'es [Specialized Courses], 3.

\bibitem{genpermflows}
K.~M{\'es}z{\'a}ros and A.~St.~Dizier.
\newblock From generalized permutahedra to {G}rothendieck polynomials via flow
  polytopes.
\newblock {\em Algebr. Comb.}, 3(5):1197--1229, 2020.

\bibitem{MTY}
C.~Monical, N.~Tokcan, and A.~Yong.
\newblock Newton polytopes in algebraic combinatorics.
\newblock {\em Selecta Math. (N.S.)}, 25(66), 2019.

\bibitem{symmgrothdeg}
J.~Rajchgot, Y.~Ren, C.~Robichaux, A.~St. Dizier, and A.~Weigandt.
\newblock Degrees of symmetric {G}rothendieck polynomials and
  {C}astelnuovo--{M}umford regularity.
\newblock {\em Proc. Amer. Math. Soc.}, 2020.
\newblock to appear.

\bibitem{grothice}
A.~Weigandt.
\newblock Bumpless pipe dreams and alternating sign matrices, 2020.
\newblock {\arxiv{2003.07342}}.

\bibitem{prismtableaux}
A.~Weigandt and A.~Yong.
\newblock The prism tableau model for {S}chubert polynomials.
\newblock {\em J. Comb. Theory, Ser. A}, 154:551--582, 2018.

\end{thebibliography}
\end{document}